\newcommand{\Hquad}{\hspace{0.5em}} 
\newtheorem{definition}{Definition}[section]
\newtheorem{remark}{Remark}[section]
\newtheorem{proposition}{Proposition}[section]
\newtheorem{lemma}{Lemma}[section]
\newtheorem{corollary}{Corollary}[section]
\newtheorem{example}{Example}[section]
\numberwithin{equation}{section}
\newtheorem{theorem}{Theorem}[section]
\newcommand{\e}{{\rm e}}
\newcommand{\F}{\mathscr{F}}
\newcommand{\R}{\mathbb{R}}
\newcommand{\rd}{\textup{d}}
\newcommand{\indi}[1]{1\hspace{-.09cm}\textup{\textrm{l}}}
\def\BState{\State\hskip-\ALG@thistlm}
\begin{document}
\title{\vspace{-1cm}\bf Multiple barrier-crossings of an \\ Ornstein-Uhlenbeck diffusion in consecutive periods}
\author{Yupeng Jiang$^{\dag}$, Andrea Macrina{$^{\dag\, \ddag}$\footnote{Corresponding author: a.macrina@ucl.ac.uk}\ }, Gareth W. Peters{$^{\S}$} \\ \\ {$^{\dag}$Department of Mathematics, University College London} \\ {London WC1E 6BT, United Kingdom} \\ \\ {$^{\ddag}$ African Institute for Financial Markets and Risk Management} \\ {University of Cape Town} \\ {Rondebosch 7701, South Africa} \\ \\ {$^{\S}$}Department of Actuarial Mathematics and Statistics \\Heriot-Watt University\\ Edinburgh EH14 4AS, United Kingdom}
\date{15 October 2020}

\maketitle
\vspace{-0.75cm}
\begin{abstract}
\noindent  
\\\vspace{-1.25cm}\\

We investigate the joint distribution and the multivariate survival functions for the maxima of an Ornstein-Uhlenbeck (OU) process in consecutive time-intervals. A PDE method, alongside an eigenfunction expansion, is adopted with which we first calculate the distribution and the survival functions for the maximum of a homogeneous OU-process in a single interval. By a deterministic time-change and a parameter translation, this result can be extended to an inhomogeneous OU-process. Next, we derive a general formula for the joint distribution and the survival functions for the maxima of a continuous Markov process in consecutive periods. With these results, one can obtain semi-analytical expressions for the joint distribution and the multivariate survival functions for the maxima of an OU-process, with piecewise constant parameter functions, in consecutive time periods. The joint distribution and the survival functions can be evaluated numerically by an iterated quadrature scheme, which can be implemented efficiently by matrix multiplications. Moreover, we show that the computation can be further simplified to the product of single quadratures by imposing a mild condition. Such results may be used for the modelling of heatwaves and related risk management challenges.
\\
\\
{\bf  {Keywords}}: Ornstein-Uhlenbeck process; first-passage-time; multiple barrier-crossings and joint survival function; time-dependent barriers; Markov process; infinite series approximation and tail convergence; quadrature and Monte Carlo schemes; numerical efficiency.
\\\vspace{-0.2cm}\\
\noindent {\bf {MSC 2010}}: 41A, 60E, 60G, 60J.
\end{abstract}

%%%%%%%%%%%%%%%%%%%%%%%%%%%%%%%%%%%%%%%%%%%%%%%%%%%%%%%%%%%%%%%%%%
\section{Introduction}
The Ornstein-Uhlenbeck (OU) process is a well-known diffusion process, widely used in physics, finance, biology and other fields. Due to its extensive use, the study of its first-passage-time (FPT) arises, naturally. The FPT density of a homogeneous OU-process to particular cases of barrier functions can be found in closed-form. For example, if the barrier is equal to the OU long-term mean, its closed-form probability density function (PDF) can be found in \cite{GoingJaeschke}, \cite{Ricciardi} and in \cite{yi}. However, it is more involved to obtain the PDF of the FPT of the homogeneous OU-process to an arbitrary constant barrier. In \cite{Leblanc} it is claimed that the closed-form solution was found, but in \cite{GoingJaeschke} it is pointed out that the results in \cite{Leblanc} are wrong due to the errors encountered when using the property of 3D Bessel bridges, see \cite{PitmanYor1981} and \cite{PitmanYor1982}. In \cite{GoingJaeschke} a Bessel bridge representation for the FPT-PDF of a homogeneous OU-process to an arbitrary constant barrier is provided. The analytical expression of the moment generating function for the homogeneous OU-FPT has been well-studied, see for instance \cite{Alili}, \cite{Patie} and \cite{Ricciardi}, and one may obtain an infinite-series representation for the PDF of the FPT of a homogeneous OU-process crossing an arbitrary constant barrier by the inverse Laplace transform, see \cite{Alili}. The same infinite-series representation is also obtained in \cite{Linetsky2004d} based on the spectral theory for options pricing in \cite{Linetsky2004b}. However, to our knowledge, the properties of the infinite-series solution, especially the tail behaviour, have not yet been studied. The tail behaviour of the infinite-series representation is practically important since it determines whether one can use the truncated series as a robust approximation.
\

The case of a time-inhomogeneous OU-process passing a time-dependent barrier tends to be more complicated. In \cite{Tuckwell} the FPT of a time-homogeneous Ito process, where the barrier function must satisfy a first-order linear ODE, is studied. Under such conditions, a solution is provided using numerical PDE methods. Work by \cite{Durbin} focusses on deriving an explicit expression for the density of the FPT of a continuous Gaussian process to a general boundary under mild conditions. The first-passage density of a Brownian motion to a curved boundary is given by an integral series in \cite{DurbinWilliams} and a numerical method is produced to compute the first-passage probability with high accuracy. In \cite{Buonocore}, it is shown that the FPT-PDF of a diffusion process passing a time-dependent boundary satisfies a Volterra integral equation of the second kind involving two arbitrary continuous functions. By this method, the FPT-PDF for a homogeneous OU-process passing some special barrier specifications, e.g. the barrier function is hyperbolic with respect to time, can be found analytically. Methods to obtain numerical solutions to the first-passage problem involving Gauss-Markov processes are developed in \cite{DiNardo} and \cite{Giorno}. The smoothness of the FPT distribution is investigated in \cite{Lehmann} and an integral equation is provided for the FPT density function of a continuous Markov process. In \cite{Gutierrez}, the integral equation approach is generalized to time-inhomogeneous diffusion processes. The FPT-PDF of a time-inhomogeneous diffusion process passing a constant barrier can be obtained numerically by solving a PDE, see e.g. \cite{2ndCourse} and \cite{Eigen}. In \cite{lo} the Fokker-Planck equation associated with an inhomogeneous OU-process passing a time-dependent barrier is studied and the method of images to derive the solution is introduced. However, the generalization to an unconstrained time-dependent barrier cannot be produced due to the strict conditions imposed by the method of images. In \cite{HernandezdelValle} one finds the FPT of Ito processes whose local drift can be modelled in terms of a solution of the Burgers equation. However, the OU-process class does not belong to such a process family. In \cite{LipKau} a semi-analytical method is developed to calculate the first hitting-time of an OU-process by use of heat potentials, and in \cite{Martin} the behaviour of the first-passage time of a mean-reverting process over short and long time periods is described by an approximation formula.
\

 {\bf Motivation}. Since all continuous functions can be approximated to arbitrary precision by piece-wise constant functions, it is worthwhile to study the FPT of a homogeneous or inhomogeneous OU-process passing a piece-wise constant barrier function. In this paper, one of the main focuses is put on the joint probability that the running maximum is above arbitrary fixed thresholds in pre-specified consecutive time intervals. Such a probabilistic problem arises for example in applications to environmental and climate risk, to which the insurance industry, but more importantly general global welfare, is exposed. Heat waves, or repeated prolonged periods of droughts, can have substantial impact on economies, be these regional or (supra-)national. A heat wave is an event that often is defined by the temperature passing a pre-specified threshold on a number of consecutive days. This is an unequivocal case where the joint probability of the running maximum of a stochastic process passing a fixed arbitrary barrier in consecutive intervals is necessary to address an important real-world challenge. However, to our knowledge, the mathematical problem has neither been formulated, nor tackled or solved before and the needed mathematical theory has not been developed, either. 
\

{\bf Main results}. In this work, we study the multivariate survival function associated with an OU-process crossing arbitrary barriers in multiple time intervals. In Section \ref{homo}, we adopt a PDE approach to deduce the infinite series representation of the survival function for the FPT of a homogeneous OU-process with lower reflection barrier passing a constant upper barrier. By considering the lower reflection barrier set at $-\infty$, we produce the same infinite series representation as in \cite{Alili} and \cite{Linetsky2004d}. This can be viewed as a generalization and an alternative derivation of the infinite series representation. Moreover, we analyze the distributional properties of the deduced survival function, especially its tail behaviour and the truncation error. In Section \ref{inhomo}, we provide a theorem that transforms the FPT of an inhomogeneous OU-process passing a time-dependent barrier to the FPT of a homogeneous OU-process with a different time-dependent barrier. This transfers the time-inhomogeneity from the process to the time-dependent barrier, which simplifies the original problem. In Section \ref{multi} we deduce an integral representation of the joint distribution and joint survival function for the maxima of a continuous Markov process in consecutive intervals.  Although the work in this paper is based on the OU-process, the integral representation derived in this section opens up avenues towards the derivation of the probability of multiple crossings in consecutive periods of generic (mean-reverting) continuous Markov processes, thus generalizing the results derived in the present paper. With the knowledge of the integral representation, alongside the FPT density function and the numerical integration method, the joint distribution and joint survival function for the maxima of an OU-process with piece-wise constant parameters in consecutive intervals can be efficiently obtained. We also show that under certain assumptions, the nested integration can be further simplified to become a product of single integrals, which leads to improved computational efficiency. Finally, in Section \ref{num} we present the quadrature scheme and the Monte Carlo integration method for the numerical integration. Comparing with the direct Monte Carlo approach, the results obtained by either the quadrature scheme or the Monte Carlo integration method show higher accuracy and robustness. This is especially true in the {\it rare-event} cases, where the direct Monte Carlo approach fails to reduce the approximation error, efficiently. 

\section{Survival function for the FPT of a homogeneous OU-process passing a constant barrier}\label{homo}
We begin by considering the first-passage-time (FPT) of a homogeneous Ornstein-Uhlenbeck (OU) process crossing a constant barrier. In order to deduce the analytical FPT survival function of a homogeneous OU-process, we adopted a PDE approach. By considering an alternative derivation, we generalise the result in \cite{Alili} and \cite{Linetsky2004d} in that we derive the infinite series representation for the survival function by setting the lower reflection barrier at negative infinity. Moreover, the distributional properties of the deduced survival function are analysed, especially its tail behaviour and the truncation error.
\begin{definition}\label{homoOU}
	An $\mathbb{R}$-valued stochastic process $\left( X_t \right)_{t\geq 0}$ is called a homogeneous OU-process if it satisfies the stochastic differential equation
	\begin{align}\label{homoOUeqn}
	\rd X_t = \left( \mu - \lambda X_t \right) \rd t + {\sigma}\rd W_t,
	\end{align}
	where $	X_0 = x \in \mathbb{R}$, for $\mu\in \mathbb{R}$, $\lambda>0$ and $\sigma >0 $, where $(W_t)_{t\geq 0}$ is a Brownian motion on the probability space $\left( \Omega, \mathscr{F}, \mathbb{P} \right)$. When $\mu = 0$, $\lambda = \sigma = 1$, we call the process $(\widetilde{X}_t)_{t\geq 0}$ that satisfies
	\begin{align}\label{simOU}
	\rd \widetilde{X}_t =  -\widetilde{X}_t \rd t + \rd W_t,
	\end{align}
	a standardised OU-process.
\end{definition}

\begin{definition}\label{FPT}
	The first-passage-time (FPT) of a continuous process $\left(X_t\right)_{t\geq 0}$ to an upper constant barrier $b>X_0=x$ is defined by
	$\tau_{X,b}:=\inf\left\{ t\geq 0 : X_t \geq b \right\}$. The survival function of $\tau_{X, b}$, denoted by $\bar{F}_{\tau_{X,b}}(t;x)$, is given by
	$\bar{F}_{\tau_{X,b}}(t;x) = \mathbb{P}\left( \tau_{X, b} \geq t\, | \, \, X_0 = x \right)$. 
\end{definition}

As stated in \cite{Alili}, \cite{Patie} and \cite{Linetsky2004d}, if $\left(X_t\right)_{t\geq 0}$ is a homogeneous OU-process, the random variable $\tau_{X,b}$ is ``properly'' defined in the sense that
$\mathbb{P}\left( \tau_{X,b} < \infty \right) = 1$. 

Next we present a relation between the survival functions of the FPTs for two different homogeneous OU-processes. With Lemma \ref{OUtransform}, if one knows the FPT distribution of a homogeneous OU-process to a given barrier, the FPT distribution of another homogeneous OU-process to a shifted barrier can also be obtained. 

\begin{lemma}\label{OUtransform}
	The random variable $\tau_{X, b}$ is equal to $\tau_{\widetilde{X}, \tilde{b}}$ in distribution for
	$$\tilde{t} = \lambda t, \quad \tilde{x} = \sqrt{\frac{\lambda}{\sigma^2}}\left( x-\frac{\mu}{\lambda} \right), \quad \tilde{b} = \sqrt{\frac{\lambda}{\sigma^2}}\left( b-\frac{\mu}{\lambda} \right),$$  
	that is, 
	$
	\bar{F}_{\tau_{X, b}}\left(t, x\right) = \bar{F}_{\tau_{\widetilde{X}, \tilde{b}}}\left(\tilde{t},\tilde{x}\right).
	$
\end{lemma}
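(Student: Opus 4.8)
The plan is to establish the distributional equality $\tau_{X,b} \stackrel{d}{=} \tau_{\widetilde{X}, \tilde b}$ by constructing an explicit pathwise transformation that maps a homogeneous OU-process $X$ into a standardised OU-process $\widetilde{X}$, under which the first-passage event $\{X_t \geq b\}$ is carried exactly onto the event $\{\widetilde{X}_{\tilde t} \geq \tilde b\}$. The natural approach is a combination of a spatial affine change of variables and a deterministic time-change, matching the three substitutions given in the statement. Concretely, I would define the candidate transformed process by $\widetilde{X}_{\tilde t} := \sqrt{\lambda/\sigma^2}\,\bigl(X_{\tilde t/\lambda} - \mu/\lambda\bigr)$, which is precisely the map suggested by the formulas for $\tilde x$ and $\tilde b$ applied at the level of the whole trajectory, with $t = \tilde t/\lambda$.

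\medskip

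\noindent\textbf{Key steps.} First I would verify that $\widetilde{X}$ so defined is a standardised OU-process in the sense of Definition \ref{homoOU}, i.e. that it solves \eqref{simOU}. Writing $Y_t := \sqrt{\lambda/\sigma^2}\,(X_t - \mu/\lambda)$ for the purely spatial transform and applying It\^o's formula (here just linearity, since the map is affine) to \eqref{homoOUeqn}, one finds
\begin{align}
\rd Y_t = \sqrt{\tfrac{\lambda}{\sigma^2}}\,\rd X_t = \sqrt{\tfrac{\lambda}{\sigma^2}}\bigl[(\mu - \lambda X_t)\,\rd t + \sigma\,\rd W_t\bigr] = -\lambda\, Y_t\,\rd t + \sqrt{\lambda}\,\rd W_t. \nonumber
\end{align}
Then I would introduce the time-change $\tilde t = \lambda t$ and set $\widetilde{X}_{\tilde t} = Y_{\tilde t/\lambda}$. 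Under this rescaling the drift coefficient $-\lambda$ becomes $-1$, and, crucially, the Brownian term transforms correctly: if $W$ is a Brownian motion then $\widetilde{W}_{\tilde t} := \sqrt{\lambda}\,W_{\tilde t/\lambda}$ is again a Brownian motion (by the Brownian scaling property), so that $\rd \widetilde{X}_{\tilde t} = -\widetilde{X}_{\tilde t}\,\rd \tilde t + \rd \widetilde{W}_{\tilde t}$, which is exactly \eqref{simOU}. One checks the initial condition $\widetilde{X}_0 = \tilde x$ directly from the definition.

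\medskip

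\noindent\textbf{From the transformation to the FPTs.} Since the spatial map $x \mapsto \sqrt{\lambda/\sigma^2}\,(x - \mu/\lambda)$ is strictly increasing (as $\lambda,\sigma>0$), the barrier-crossing event is preserved: for each $t$, $X_t \geq b$ holds if and only if $Y_t \geq \tilde b$, equivalently $\widetilde{X}_{\lambda t} \geq \tilde b$. Because the time-change $\tilde t = \lambda t$ is a strictly increasing continuous bijection of $[0,\infty)$, taking infima over the respective hitting sets gives $\tau_{\widetilde{X}, \tilde b} = \lambda\,\tau_{X, b}$ pathwise on the coupled probability space. Passing to distributions (or evaluating survival functions at $\tilde t = \lambda t$) yields $\bar F_{\tau_{X,b}}(t,x) = \bar F_{\tau_{\widetilde{X}, \tilde b}}(\tilde t, \tilde x)$, as claimed.

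\medskip

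\noindent\textbf{Main obstacle.} The substantive point, rather than the algebra, is the correct handling of the time-change on the stochastic integral: one must invoke Brownian scaling to confirm that $\sqrt{\lambda}\,W_{\cdot/\lambda}$ is a genuine Brownian motion (with respect to the appropriately time-changed filtration), so that the transformed process is bona fide a standardised OU-process and not merely a process satisfying the right formal SDE. I expect this verification—ensuring the driving noise remains a Brownian motion under the deterministic time rescaling—to be the only step requiring care; the spatial affine transformation and the monotonicity argument for the hitting times are routine.
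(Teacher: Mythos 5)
Your proposal is correct and follows essentially the same route as the paper: an affine spatial map combined with the deterministic time-change $\tilde t = \lambda t$, verification that the transformed process satisfies the standardised OU SDE, and preservation of the barrier-crossing event under the monotone transformations. Your treatment is in fact slightly more careful than the paper's, since you make explicit the Brownian scaling step ($\sqrt{\lambda}\,W_{\cdot/\lambda}$ being again a Brownian motion) that the paper's displayed computation of $\rd\bigl(\sqrt{\lambda/\sigma^2}\,X_{t/\lambda} - \mu/(\sigma\sqrt{\lambda})\bigr)$ leaves implicit.
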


\begin{proof}
	We have
	\begin{align*}
	\bar{F}_{\tau_{X, b}}\left(t, x\right) =& \mathbb{P}\left( \sup_{s\in[0,t]} X_s < b \Big\vert\, X_0 = x \right).
	\end{align*}
	Then by a change of time, it follows
	\begin{align*}
	\mathbb{P}\left( \sup_{s\in[0,t]} X_s < b \Big\vert\, X_0 = x \right) = & \mathbb{P}\left( \sup_{s\in[0,\lambda t]} X_{{s}/{\lambda}} < b \Big\vert\, X_0 = x \right).
	\end{align*}
Furthermore,
	\begin{align*}
	\mathbb{P}\left( \sup_{s\in[0,\lambda t]} X_{{s}/{\lambda}} < b \Big\vert\, X_0 = x \right)&= \mathbb{P}\left( \sup_{s\in[0,\lambda t]} \sqrt{\frac{\lambda}{\sigma^2}} X_{{s}/{\lambda}} < \sqrt{\frac{\lambda}{\sigma^2}} b\, \Big\vert\, \sqrt{\frac{\lambda}{\sigma^2}} X_0 = \sqrt{\frac{\lambda}{\sigma^2}} x \right) \\
	&= \mathbb{P}\left( \sup_{s\in[0,\lambda t]} \left( \sqrt{\frac{\lambda}{\sigma^2}} X_{{s}/{\lambda}} - \frac{\mu}{\sigma\sqrt{\lambda}} \right) < \tilde{b} \,\Big\vert\, \sqrt{\frac{\lambda}{\sigma^2}} X_0 -\frac{\mu}{\sigma\sqrt{\lambda}} = \tilde{x} \right).
	\end{align*}
	The dynamics of the process $(\sqrt{\lambda/\sigma^2}\, X_{{s}/{\lambda}} -\mu/(\sigma\sqrt{\lambda}) )_{s\geq 0}$ are given by
	\begin{align*}
	\rd \left(  \sqrt{\frac{\lambda}{\sigma^2}} X_{{s}/{\lambda}} - \frac{\mu}{\sigma\sqrt{\lambda}} \right) =& \sqrt{\frac{\lambda}{\sigma^2}} \rd X_{{t}/{\lambda}} 
	= - \left( \sqrt{\frac{\lambda}{\sigma^2}} X_{{t}/{\lambda}} - \frac{\mu}{\sigma\sqrt{\lambda}} \right) \rd t + \rd W_t.
	\end{align*}
	This means that, in law, the process $(\sqrt{\lambda/\sigma^2}\, X_{{s}/{\lambda}} -\mu/(\sigma\sqrt{\lambda}) )_{s\geq 0}$ is a standardised OU-process. Therefore,
	\begin{align*}
	\mathbb{P}\left( \sup_{s\in[0,t]} X_s < b \Big\vert\, X_0 = x \right)  = \mathbb{P}\left( \sup_{s\in[0,\tilde{t}]} \widetilde{X}_s < \tilde{b} \Big\vert\, \widetilde{X}_0 = \tilde{x} \right), 
	\end{align*}
	that is,
	$\bar{F}_{\tau_{X, b}}\left(t, x\right) = \bar{F}_{\tau_{\widetilde{X}, \tilde{b}}}\left(\tilde{t},\tilde{x}\right)$.
\end{proof}

Lemma \ref{OUtransform} provides a relationship between the survival functions---that is between the distributions of the FPTs---of the homogeneous, respectively, the standardized OU-process. In order to calculate the FPT survival function for a homogeneous OU-process, one can first calculate the FPT survival function for a standardized OU-process. Therefore, from now on in this section, we consider the case of a standardized OU-process. 

\subsection{The FPT survival function of the standardized OU-process to a constant barrier}

The FPT survival function of the standardized OU-process to a constant upper barrier can be characterized by the following PDE problem. On the space $C^{1,2}\left( [0, \infty), (-\infty, \tilde b] \right)$, the function $\bar{F}_{\tau_{\widetilde{X}, \tilde{b}}}\left(\tilde{t}, \tilde{x}\right)$ satisfies the PDE
\begin{align}\label{specPDE}
\frac{\partial \bar{F}_{\tau_{\widetilde{X}, \tilde{b}}}}{\partial \tilde t} = \mathscr{A} \bar{F}_{\tau_{\widetilde{X}, \tilde{b}}}
\end{align}
subject to the initial and boundary conditions 
\begin{align}
\bar{F}_{\tau_{\widetilde{X}, \tilde{b}}}\left(0, \tilde{x}\right) &= 1, \label{specIni}\\
\bar{F}_{\tau_{\widetilde{X}, \tilde{b}}}\left(\tilde t, \tilde{b}\right) &= 0. \label{specBnd1}
\end{align}
Here $\mathscr{A}$ is the infinitesimal operator of a standardized OU-process $(\widetilde{X}_{\tilde t})_{\tilde t\geq 0}$ given by
\begin{align*}
\mathscr{A} = - \tilde x \frac{\partial }{\partial \tilde x} + \frac{1}{2} \frac{\partial^2 }{\partial \tilde x^2}.
\end{align*}
In order to solve this PDE, we add the lower boundary condition
\begin{align}\label{specBnd2}
\frac{\partial \bar{F}_{\tau_{\widetilde{X}, \tilde{b}}}}{\partial \tilde x}\left(\tilde t, \tilde a\right) &= 0,
\end{align}
which is the condition for a reflecting lower boundary at location $\tilde{a}<\tilde{b}$. 

\begin{proposition}\label{reflectPDE}
	The analytic solution to the PDE (\ref{specPDE}), subject to the initial condition and boundary conditions (\ref{specIni}), (\ref{specBnd1}) and (\ref{specBnd2}) is given by
	\begin{align*}
	\bar{F}_{\tau_{\widetilde{X}, \tilde{b}}}(\tilde t, \tilde x) = \sum\limits_{k = 1}^{\infty} c_k \e^{-\alpha_k \cdot \tilde t} H(\alpha_k, \tilde x;\tilde a),
	\end{align*}
	for $k\in\mathbb{N}$, where
	\begin{align*}
	H(\alpha_k, \tilde x;\tilde a) =& 
	\frac{2^\alpha \sqrt{\pi}}{\Gamma\left( \frac{1-\alpha}{2} \right)}\left({\Hquad}_1F_{1}\left( -\frac{\alpha_k}{2};\frac{1}{2};\tilde x^2 \right) + y(\alpha_k,\tilde a) \tilde x {\Hquad}_1F_{1}\left( \frac{1-\alpha_k}{2};\frac{3}{2};\tilde x^2 \right)\right),\\
	y(\alpha_k,\tilde a) =& \frac{2 \alpha_k \tilde a {\Hquad}_1F_{1}\left( \frac{2-\alpha_k}{2};\frac{3}{2};\tilde a^2 \right) }{{\Hquad}_1F_{1}\left( \frac{1-\alpha_k}{2};\frac{3}{2};\tilde a^2 \right) + \frac{2}{3}(1-\alpha_k)\tilde a^2 {\Hquad}_1F_{1}\left( \frac{3-\alpha_k}{2};\frac{5}{2};\tilde a^2 \right)}.
	\end{align*}
	Here, ${\Hquad}_1F_{1}$ is the confluent hypergeometric function of the first kind and $\alpha_k$ are the ordered solutions to the equation
	\begin{align}
	{\Hquad}_1F_{1}\left( -\frac{\alpha}{2};\frac{1}{2};\tilde b^2 \right) + y(\alpha,\tilde a) \tilde b {\Hquad}_1F_{1}\left( \frac{1-\alpha}{2};\frac{3}{2};\tilde b^2 \right) = 0 \label{y_alpha}
	\end{align}
	with respect to $\alpha$. Furthermore, the coefficient $c_k$ is given by $$c_k =-1/[\alpha_k\, {\partial_{\alpha_k} H}( \alpha_k, \tilde x;\tilde a)].$$
\end{proposition}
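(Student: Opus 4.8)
The plan is to solve the PDE by separation of variables, extracting the eigenfunctions and the eigenvalue equation, and then to pin down the expansion coefficients by a Laplace-transform (spectral-residue) argument. First I would look for product solutions $\bar F_{\tau_{\widetilde X,\tilde b}}(\tilde t,\tilde x)=T(\tilde t)\phi(\tilde x)$. Substituting into (\ref{specPDE}) and separating variables gives $T'/T=\mathscr{A}\phi/\phi=-\alpha$ for a separation constant $\alpha$, hence $T(\tilde t)=\e^{-\alpha\tilde t}$ and the spatial eigenvalue equation
\begin{align*}
\tfrac12\phi''(\tilde x)-\tilde x\,\phi'(\tilde x)+\alpha\,\phi(\tilde x)=0.
\end{align*}
After the substitution $u=\tilde x^2$ this is Kummer's (confluent hypergeometric / Hermite) equation, whose two linearly independent solutions are the even function ${}_1F_1(-\alpha/2;1/2;\tilde x^2)$ and the odd function $\tilde x\,{}_1F_1((1-\alpha)/2;3/2;\tilde x^2)$. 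The general spatial solution is a linear combination of these two, matching the bracketed expression in $H$.

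Next I would impose the boundary conditions. Differentiating the two fundamental solutions via the rule $\tfrac{\rd}{\rd z}{}_1F_1(a;b;z)=(a/b)\,{}_1F_1(a+1;b+1;z)$ and applying the reflecting condition (\ref{specBnd2}), $\phi'(\tilde a)=0$, yields one linear relation between the two coefficients; solving it produces exactly the ratio $y(\alpha,\tilde a)$, hence the eigenfunction $H(\alpha,\tilde x;\tilde a)$ up to the overall normalisation $2^{\alpha}\sqrt\pi/\Gamma((1-\alpha)/2)$. Imposing the absorbing condition (\ref{specBnd1}), $\phi(\tilde b)=0$, then gives precisely the transcendental equation (\ref{y_alpha}), whose ordered roots are the admissible $\alpha_k$. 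Because $\mathscr A$ is in Sturm--Liouville form $\mathscr A\phi=\tfrac12 w^{-1}(w\phi')'$ with the smooth positive weight $w(\tilde x)=\e^{-\tilde x^2}$ on the bounded interval $[\tilde a,\tilde b]$, this is a regular Sturm--Liouville problem: the $\alpha_k$ are real, simple and increase to $+\infty$, and the $H(\alpha_k,\cdot\,;\tilde a)$ form a complete orthogonal basis of $L^2([\tilde a,\tilde b],w\,\rd\tilde x)$, so a series of the stated form can represent the solution.

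To fix $c_k$ I would pass to the Laplace transform in time, $\widehat{\bar F}(s,\tilde x)=\int_0^\infty \e^{-s\tilde t}\bar F\,\rd\tilde t$. Transforming (\ref{specPDE}) with the initial condition (\ref{specIni}) gives the inhomogeneous ODE $(\mathscr A-s)\widehat{\bar F}=-1$, for which $1/s$ is a particular solution; adding the homogeneous solution that already obeys the reflecting condition and enforcing (\ref{specBnd1}) yields
\begin{align*}
\widehat{\bar F}(s,\tilde x)=\frac{1}{s}\left(1-\frac{H(s,\tilde x;\tilde a)}{H(s,\tilde b;\tilde a)}\right).
\end{align*}
Inverting by the Bromwich integral and summing residues at the simple poles $s=-\alpha_k$ (the zeros of $s\mapsto H(s,\tilde b;\tilde a)$, which coincide with the roots of (\ref{y_alpha})), and using $\partial_s=-\partial_\alpha$, produces the series with $c_k=-1/[\alpha_k\,\partial_{\alpha_k}H(\alpha_k,\tilde b;\tilde a)]$, the derivative being taken at the upper barrier $\tilde b$.

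The main obstacle is this coefficient step: one must justify closing the Bromwich contour and exchanging the infinite sum with limits and derivatives, which requires control of $H(s,\tilde x;\tilde a)$ for large $|s|$ and a lower bound on the spacing of the $\alpha_k$. Equivalently, if one computes $c_k$ instead from Sturm--Liouville orthogonality as $\int_{\tilde a}^{\tilde b}H(\alpha_k,\tilde x;\tilde a)\,w\,\rd\tilde x\big/\int_{\tilde a}^{\tilde b}H(\alpha_k,\tilde x;\tilde a)^2\,w\,\rd\tilde x$, the effort shifts to evaluating the normalisation integral in closed form and identifying it with $\alpha_k\,\partial_{\alpha_k}H(\alpha_k,\tilde b;\tilde a)$ through a Green's-function residue identity. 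Checking that the $s=0$ singularity is removable (so that $\bar F\to 0$ as $\tilde t\to\infty$, consistent with $\PR(\tau_{X,b}<\infty)=1$) and verifying the $\Gamma$-normalisation are comparatively routine.
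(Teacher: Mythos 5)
Your proposal follows essentially the same route as the paper: separation of variables (equivalently, the eigenfunction expansion the paper invokes) reduces (\ref{specPDE}) to the Hermite ODE, the two Kummer solutions give the general spatial solution, the reflecting condition at $\tilde a$ fixes the ratio $y(\alpha,\tilde a)$, and the absorbing condition at $\tilde b$ yields the eigenvalue equation (\ref{y_alpha}); all of this matches the paper's argument step for step. The one place you go beyond the paper is the coefficient $c_k$: the paper dismisses this as ``tedious but simple steps'' with a reference to Linetsky, whereas you spell out the resolvent computation $\widehat{\bar F}(s,\tilde x)=s^{-1}\bigl(1-H(\cdot,\tilde x;\tilde a)/H(\cdot,\tilde b;\tilde a)\bigr)$ and the Bromwich/residue inversion at the simple poles $s=-\alpha_k$ --- which is precisely the cited method, and your check that the $s=0$ singularity is removable (since $H(0,\cdot;\tilde a)\equiv 1$) is a worthwhile sanity check the paper omits. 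Two small remarks: your residue bookkeeping should be stated for $H(-s,\cdot;\tilde a)$ rather than $H(s,\cdot;\tilde a)$, since the homogeneous equation $(\mathscr A-s)\psi=0$ corresponds to eigenvalue $\alpha=-s$ (your ``$\partial_s=-\partial_\alpha$'' shows you are aware of this, but the pole set should be described accordingly); and your conclusion that the $\alpha$-derivative in $c_k$ is evaluated at the barrier $\tilde b$ is in fact more precise than the statement's $\partial_{\alpha_k}H(\alpha_k,\tilde x;\tilde a)$, and is consistent with the formula given later in Theorem \ref{OUhit}.
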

\begin{remark}
	This proposition provides a generalization to the infinite series representation in \cite{Alili} and \cite{Linetsky2004d}. It recovers the previous result when $\tilde{a}\rightarrow-\infty$, which will be shown in Theorem \ref{OUhit}. We refer to \cite{Kent} for eigenvalue expansions as a technique to solve first-passage time problems in a diffusion setting.
\end{remark}

\begin{proof}
	By the method of eigenfunction expansion, $\bar{F}_{\tau_{\widetilde{X}, \tilde{b}}}(\tilde t, \tilde x)$ admits the following representation
	\begin{align*}
	\bar{F}_{\tau_{\widetilde{X}, \tilde{b}}}(\tilde t, \tilde x) = \sum\limits_{k = 1}^{\infty} c_k \e^{-\alpha_k \tilde t} \phi_k(\tilde x), 
	\end{align*}
	where $c_k$ are the constant coefficients, and $\alpha_k$ and $\phi_k(\tilde x)$ are the eigenvalues and eigenfunctions that satisfy the general eigenfunction equation
	\begin{align}\label{eigenEqn}
	\mathscr{A}\phi_k(\tilde x) = -\alpha_k \phi_k(\tilde x)
	\end{align}
	subject to $\phi_k^\prime(\tilde a) = \phi_k(\tilde b) = 0.$ The pair $\left(\phi(\cdot), \alpha\right)$ satisfies
	\begin{align}\label{hermite}
	\frac{\rd ^2 \phi}{\rd \tilde x^2} - 2\tilde x \frac{\rd \phi}{\rd \tilde x} +  2\alpha \phi = 0
	\end{align}
	subject to 
	\begin{align}\label{hermiteCond}
	\phi^\prime(\tilde a) = \phi(\tilde b) = 0.
	\end{align}
	As shown in \cite{ODEs}, the ODE (\ref{hermite}) is known as the Hermite differential equation, whose general solution is given by
	\begin{align}\label{hermiteSol}
	\phi(\tilde x) = A {\Hquad}_1F_{1}\left( -\frac{\alpha}{2};\frac{1}{2};\tilde x^2 \right) + B \tilde x {\Hquad}_1F_{1}\left( \frac{1 - \alpha}{2};\frac{3}{2};\tilde x^2 \right) 
	\end{align}
	where $A$ and $B$ are independent of $\tilde{x}$. After substituting Equation (\ref{hermiteSol}) into condition (\ref{hermiteCond}), we obtain the system
	\begin{align*}
	\left\{
	\begin{array}{l}
	A  {\Hquad}_1F_{1}\left( -\frac{\alpha}{2};\frac{1}{2};\tilde b^2 \right) + B \tilde b  {\Hquad}_1F_{1}\left( \frac{1 - \alpha}{2};\frac{3}{2};\tilde b^2 \right) = 0, \\
	\\
	B \left[ {\Hquad}_1F_{1}\left( \frac{1-\alpha}{2};\frac{3}{2};\tilde a^2 \right) + \frac{2}{3}(1-\alpha)\tilde a^2 {\Hquad}_1F_{1}\left(\frac{3-\alpha}{2};\frac{5}{2};\tilde a^2 \right) \right] =2 A \alpha \tilde a {\Hquad}_1F_{1}\left( \frac{2-\alpha}{2};\frac{3}{2};\tilde a^2 \right).
	\end{array}
	\right.
	\end{align*}
	Therefore, the eigenvalues $\alpha_k$ must be the zeros of the equation
	\begin{align*}
	{\Hquad}_1F_{1}\left( -\frac{\alpha}{2};\frac{1}{2};\tilde b^2 \right) + y(\alpha,\tilde a) \tilde b  {\Hquad}_1F_{1}\left( \frac{1-\alpha}{2};\frac{3}{2};\tilde b^2 \right) = 0
	\end{align*}
	with respect to $\alpha$. We write
	\begin{align*}
	\phi_k(\tilde x) =& H(\alpha_k, \tilde x;\tilde a)\\ =& \frac{2^\alpha \sqrt{\pi}}{\Gamma\left( \frac{1-\alpha}{2} \right)}\left({\Hquad}_1F_{1}\left( -\frac{\alpha_k}{2};\frac{1}{2};\tilde x^2 \right) + y(\alpha_k,\tilde a) \tilde x  {\Hquad}_1F_{1}\left( \frac{1-\alpha_k}{2};\frac{3}{2};\tilde x^2 \right)\right), 
	\end{align*}
	which is convenient for later use. Similar to \cite{Linetsky2004b}, the coefficient of each term can be calculated by tedious but simple steps yielding
	\begin{align*}
	c_k =-1/[\alpha_k\, {\partial_{\alpha_k} H}( \alpha_k, \tilde x;\tilde a)].
	\end{align*} 
\end{proof}
Proposition \ref{reflectPDE} gives the survival function of the FPT for a homogeneous OU-process passing a given upper barrier subject to a lower reflection boundary. We re-derive the formulae after removing the lower reflection boundary by taking a limit in the following theorem. This can be treated as a different derivation of the infinite series representation in \cite{Alili} and \cite{Linetsky2004d} based on relaxing specific conditions. Here, the definition of the Hermite function $\mathscr{H}_\alpha(x)$ is given in \cite{Abramowitz}.

\begin{theorem}\label{OUhit}
	The analytic solution to the PDE (\ref{specPDE}), subject to the initial and boundary conditions (\ref{specIni}) and (\ref{specBnd1}), respectively, is given by
	\begin{align*}
	\bar{F}_{\tau_{\widetilde{X}, \tilde{b}}}(\tilde t, \tilde x) = \sum\limits_{k = 1}^{\infty} c_k \e^{-\alpha_k \tilde t} \mathscr{H}_{\alpha_k}\left( -\tilde x \right)
	\end{align*}
	for $k \in \mathbb{N}$. Here, $\mathscr{H}_\alpha(\cdot)$ is the Hermite function with parameter $\alpha$, and $\alpha_k$ are the solutions to the equation
	$\mathscr{H}_{\alpha}\left( -\tilde b \right) = 0$
	with respect to $\alpha$, and 
	$c_k = -1/[\alpha_k \cdot {\partial_{\alpha_k} \mathscr{H}_{\alpha_k}}( -\tilde b)]$. 
\end{theorem}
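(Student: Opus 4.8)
The plan is to obtain Theorem~\ref{OUhit} as the limit of Proposition~\ref{reflectPDE} when the reflecting lower boundary is pushed to $-\infty$, i.e.\ $\tilde a\to-\infty$. The prefactor $2^\alpha\sqrt\pi/\Gamma((1-\alpha)/2)$ appearing in $H(\alpha_k,\tilde x;\tilde a)$ has been chosen precisely so that its first summand already coincides with the first term of the standard representation of the Hermite function (see \cite{Abramowitz})
\[
\mathscr{H}_\alpha(-\tilde x) = \frac{2^\alpha\sqrt\pi}{\Gamma\left(\frac{1-\alpha}{2}\right)}\,{}_1F_1\left(-\frac{\alpha}{2};\frac12;\tilde x^2\right) + \frac{2^{\alpha+1}\sqrt\pi}{\Gamma\left(-\frac{\alpha}{2}\right)}\,\tilde x\,{}_1F_1\left(\frac{1-\alpha}{2};\frac32;\tilde x^2\right).
\]
It therefore suffices to show that the coefficient multiplying the second $\,{}_1F_1$ in $H$ converges to the one above, i.e.\ to identify $\lim_{\tilde a\to-\infty} y(\alpha,\tilde a)$ and check that it equals $2\,\Gamma((1-\alpha)/2)/\Gamma(-\alpha/2)$.

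To compute this limit I would use the large-argument asymptotics of the confluent hypergeometric function, ${}_1F_1(p;q;z)\sim \Gamma(q)\,\Gamma(p)^{-1}\e^{z}z^{p-q}$ as $z=\tilde a^2\to\infty$, applied to the three hypergeometric factors in $y(\alpha,\tilde a)$. The common factor $\e^{\tilde a^2}$ cancels between numerator and denominator, and within the denominator the term carrying the extra factor $\tilde a^2$ dominates the other by one power of $\tilde a^2$. What survives is a ratio of Gamma functions together with $\tilde a\,(\tilde a^2)^{-1/2}=\operatorname{sgn}(\tilde a)\to-1$; after using $\Gamma(3/2)/\Gamma(5/2)=2/3$ and the recurrence $\Gamma(z+1)=z\,\Gamma(z)$ on $\Gamma((3-\alpha)/2)$ and $\Gamma((2-\alpha)/2)$, the algebraic prefactors collapse and one obtains $\lim_{\tilde a\to-\infty} y(\alpha,\tilde a)=2\,\Gamma((1-\alpha)/2)/\Gamma(-\alpha/2)$. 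Substituting this limit into $H(\alpha_k,\tilde x;\tilde a)$ reproduces $\mathscr{H}_{\alpha_k}(-\tilde x)$ exactly; the same substitution into the eigenvalue equation (\ref{y_alpha}), after dividing by the nonzero prefactor, yields $\mathscr{H}_\alpha(-\tilde b)=0$, and the coefficient passes to $c_k=-1/[\alpha_k\,\partial_{\alpha_k}\mathscr{H}_{\alpha_k}(-\tilde b)]$.

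The hard part is not this computation but justifying that the full eigenfunction expansion converges to the claimed limit term by term. One must argue that the discrete eigenvalues $\alpha_k(\tilde a)$ of the reflecting problem converge, as $\tilde a\to-\infty$, to the ordered zeros of $\alpha\mapsto\mathscr{H}_\alpha(-\tilde b)$, and that both the coefficients $c_k(\tilde a)$ and the series $\sum_k c_k\,\e^{-\alpha_k\tilde t}H(\alpha_k,\tilde x;\tilde a)$ admit passage to the limit (e.g.\ via tail bounds uniform in $\tilde a$ enabling dominated convergence on the sum, together with commuting $\partial_{\alpha}$ with the limit when handling $c_k$). Controlling the eigenvalues uniformly in $\tilde a$ is the delicate point, since the spectral data are defined only implicitly through transcendental equations.

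A cleaner route, which I would favour for rigour, bypasses the limit and solves (\ref{specPDE})--(\ref{specBnd1}) directly by eigenfunction expansion on the half-line $(-\infty,\tilde b]$. The generator $\mathscr{A}$ is self-adjoint in $L^2$ with respect to the OU speed density $\propto\e^{-\tilde x^2}$, and the endpoint $-\infty$ is a natural (limit-point) boundary. Of the two independent solutions of the Hermite ODE (\ref{hermite}), only one is square-integrable against $\e^{-\tilde x^2}$ near $-\infty$ (the other grows like $\e^{\tilde x^2}$), and that admissible solution is exactly $\mathscr{H}_\alpha(-\tilde x)$. The absorbing condition $\phi(\tilde b)=0$ then forces $\mathscr{H}_\alpha(-\tilde b)=0$, and the Sturm--Liouville expansion of the constant initial datum $\bar F\equiv 1$ delivers the coefficients $c_k$, reproducing the statement while making the selection of the correct solution branch transparent.
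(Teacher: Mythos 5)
Your primary route is exactly the paper's proof: apply the large-argument asymptotic ${}_1F_1(p;q;z)\sim\Gamma(q)\,\e^{z}z^{p-q}/\Gamma(p)$ to the three hypergeometric factors in $y(\alpha,\tilde a)$, note that the term in the denominator carrying the extra $\tilde a^2$ dominates, obtain $\lim_{\tilde a\to-\infty}y(\alpha,\tilde a)=2\,\Gamma\bigl(\tfrac{1-\alpha}{2}\bigr)/\Gamma\bigl(-\tfrac{\alpha}{2}\bigr)$, and recognise the resulting combination in $H$ as $\mathscr{H}_\alpha(-\tilde x)$, so that the eigenvalue condition becomes $\mathscr{H}_\alpha(-\tilde b)=0$; your limit computation (including the $\tilde a(\tilde a^2)^{-1/2}\to-1$ sign and the Gamma recurrences) is correct and matches the paper's. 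Your two additions -- flagging that the term-by-term convergence of the eigenvalues, coefficients and series as $\tilde a\to-\infty$ requires justification (a point the paper passes over silently), and the alternative direct Sturm--Liouville argument on $(-\infty,\tilde b]$ that selects $\mathscr{H}_\alpha(-\tilde x)$ as the unique solution of the Hermite ODE square-integrable against $\e^{-\tilde x^2}$ at the natural boundary -- are both sound, the latter being essentially the spectral derivation in the cited work of Linetsky that the theorem is stated to recover.
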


\begin{proof}
	By \cite{Abramowitz}, we have the asymptotic
	${\Hquad}_1F_{1}\left( x;y;z \right) \sim \Gamma(y)\e^z z^{x-y}/\Gamma(x)$
	for $z\rightarrow \infty$, that is,
	\begin{align*}
	\lim\limits_{z \rightarrow \infty} \frac{{\Hquad}_1F_{1}\left( x;y;z \right)\Gamma(x)}{\Gamma(y){\e^z z^{x-y}}} = 1.
	\end{align*}
	Therefore,
	\begin{align*}
	\lim\limits_{\tilde{a} \rightarrow\, -\infty} y(\alpha, \tilde a) =& \lim\limits_{\tilde{a} \rightarrow\, -\infty}\frac{2 \alpha \tilde a {\Hquad}_1F_{1}\left( \frac{2-\alpha}{2};\frac{3}{2};\tilde a^2 \right) }{{\Hquad}_1F_{1}\left( \frac{1-\alpha}{2};\frac{3}{2};\tilde a^2 \right) + \frac{2}{3}(1-\alpha)\tilde a^2 {\Hquad}_1F_{1}\left( \frac{3-\alpha}{2};\frac{5}{2};\tilde a^2 \right)}\\
	=& \frac{ 2\alpha}{\lim\limits_{\tilde a \rightarrow\, -\infty} \frac{{\Hquad}_1F_{1}\left( \frac{1-\alpha}{2};\frac{3}{2};\tilde a^2 \right)}{{\Hquad}_1F_{1}\left( \frac{2-\alpha}{2};\frac{3}{2};\tilde a^2 \right)\tilde a} + \frac{2}{3}(1-\alpha) \lim\limits_{\tilde a \rightarrow\, -\infty} \frac{{\Hquad}_1F_{1}\left( \frac{3-\alpha}{2};\frac{5}{2};\tilde a^2 \right)\tilde a}{{\Hquad}_1F_{1}\left( \frac{2-\alpha}{2};\frac{3}{2};\tilde a^2 \right)} }
	=  \frac{2\Gamma(\frac{1-\alpha}{2})}{\Gamma(-\frac{\alpha}{2})},
	\end{align*}
	in particular, when $\alpha = \alpha_k$.
	For $\tilde{a} \rightarrow\, -\infty$ and $\alpha = \alpha_k$, the eigenvalues $\alpha_k$ are required to satisfy
	\begin{align*}
	2^{\alpha}\sqrt{\pi}\left(\frac{{\Hquad}_1F_{1}\left( -\frac{\alpha}{2};\frac{1}{2};\tilde b^2 \right)}{\Gamma\left( \frac{1-\alpha}{2} \right)} + 2 \tilde b\frac{{\Hquad}_1F_{1}\left( \frac{1-\alpha}{2};\frac{3}{2};\tilde b^2 \right)}{\Gamma\left( -\frac{\alpha}{2} \right)} \right)= 0, 
	\end{align*}
	which turn out to be the zeros of the Hemite function $\mathscr{H}_\alpha(- \tilde b)$ with respect to $\alpha$:
	\begin{align*}
	\mathscr{H}_\alpha(-\tilde b):=
	2^\alpha\sqrt{\pi}\left[\frac{{\Hquad}_1F_{1}\left( -\frac{\alpha}{2};\frac{1}{2};\tilde b^2 \right)}{\Gamma\left( \frac{1-\alpha}{2} \right)} +2\tilde b\frac{{\Hquad}_1F_{1}\left( \frac{1-\alpha}{2};\frac{3}{2};\tilde b^2 \right)}{\Gamma\left( -\frac{\alpha}{2} \right)}\right]= 0.
	\end{align*}
	Thus, the eigenfunctions are represented by $\phi_k(\tilde x) =\mathscr{H}_{\alpha_k}(-\tilde x)$.
	The coefficients can be then obtained by Proposition \ref{reflectPDE}. 
\end{proof}
\begin{remark}
	The Hermite function $\mathscr{H}_\alpha(x)$ is equal to the limit
	$\lim\limits_{\tilde a \rightarrow -\infty} H(\alpha, x; \tilde a)$
	in Proposition \ref{reflectPDE}.
\end{remark}
{
	\begin{example}\rm{
			Here we consider the PDF for the FPT of a standardized OU-process hitting the upper barrier $\tilde b$ with different lower reflection barriers $\tilde a$. Figure \ref{zero_fpt} shows that the distance between ordered eigenvalues tends to increase, regardless of the value $\tilde{a}$ takes.  We can observe from Figure \ref{pdf_fpt} that when $\tilde a$ becomes smaller, the PDF with lower reflection barrier approaches the PDF without lower reflection barrier.}
		\begin{figure}[H]
			\centering
			\includegraphics[width=0.7\textwidth]{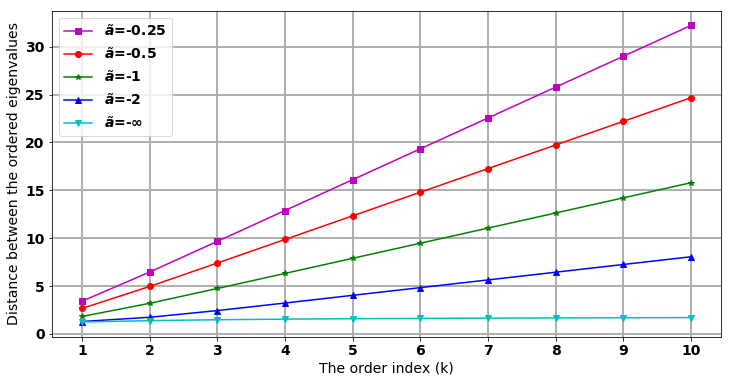}
			\caption{Distance between ordered eigenvalues for the PDE (\ref{specPDE}) with upper barrier $\tilde b = 1.5$ and different lower reflection barriers $\tilde a$.}\label{zero_fpt}
		\end{figure}
		\begin{figure}[H]
			\centering
			\includegraphics[width=0.7\textwidth]{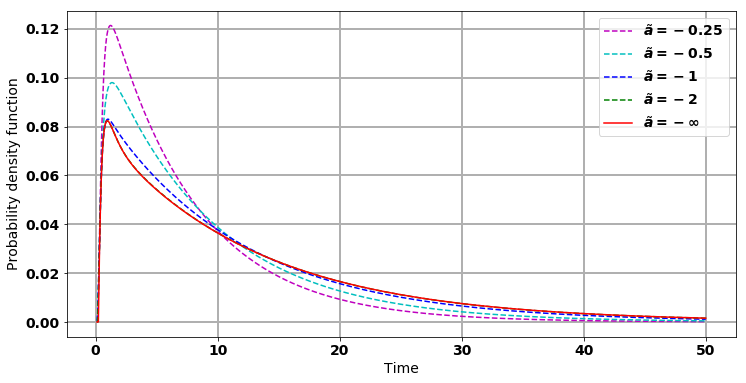}
			\caption{The probability density function for the first-passage-time of a standardized OU-process crossing the upper barrier $\tilde b = 1.5$ with lower reflection barrier $\tilde a$.}\label{pdf_fpt}
		\end{figure}
	\end{example}
}

\begin{corollary}\label{OUhitformula}
	The analytic form of the FPT survival function of the OU-process (\ref{homoOUeqn}) is given by
	\begin{align}
	\bar{F}_{\tau_{{X}, {b}}}(t, x) = \sum\limits_{k = 1}^{\infty} c_k \e^{-\lambda \alpha_k  t } \mathscr{H}_{\alpha_k}\left( -\sqrt{\frac{\lambda}{\sigma^2}}\left( x-\frac{\mu}{\lambda} \right) \right), \label{homoOUFPT}
	\end{align}
	where $\mathscr{H}_{\alpha_k}(\cdot)$ is the Hermite function, and the $\alpha_k$'s are the ordered solutions to the equation
	\begin{align*}
	\mathscr{H}_{\alpha_k}\left( -\sqrt{\frac{\lambda}{\sigma^2}}\left( b-\frac{\mu}{\lambda} \right) \right) = 0.
	\end{align*}
	Furthermore, the coefficient $c_k$ is given by
	\begin{align*}
	c_k &= -\frac{1}{\alpha_k \cdot \partial_{ \alpha_k} \mathscr{H}_{ \alpha_k}\left( -\sqrt{\frac{\lambda}{\sigma^2}}\left( b-\frac{\mu}{\lambda} \right) \right)}.
	\end{align*}
\end{corollary}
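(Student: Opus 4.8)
The plan is to obtain Corollary \ref{OUhitformula} as an immediate consequence of Lemma \ref{OUtransform} and Theorem \ref{OUhit}, so that essentially no new analysis is needed beyond a careful change of variables. First I would invoke Lemma \ref{OUtransform} to rewrite the survival function of the general homogeneous OU-process in terms of that of the standardized process, namely
\begin{align*}
\bar{F}_{\tau_{X, b}}(t, x) = \bar{F}_{\tau_{\widetilde{X}, \tilde{b}}}(\tilde t, \tilde x),
\end{align*}
together with the explicit substitutions $\tilde t = \lambda t$, $\tilde x = \sqrt{\lambda/\sigma^2}\,(x - \mu/\lambda)$ and $\tilde b = \sqrt{\lambda/\sigma^2}\,(b - \mu/\lambda)$ provided by that lemma.

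Next I would substitute into the right-hand side the infinite-series representation established in Theorem \ref{OUhit}, that is $\bar{F}_{\tau_{\widetilde{X}, \tilde{b}}}(\tilde t, \tilde x) = \sum_{k=1}^{\infty} c_k \e^{-\alpha_k \tilde t}\mathscr{H}_{\alpha_k}(-\tilde x)$, and then replace $\tilde t$, $\tilde x$ and $\tilde b$ by their expressions in $t$, $x$ and $b$. The exponential factor becomes $\e^{-\alpha_k \tilde t} = \e^{-\lambda \alpha_k t}$ and the eigenfunction becomes $\mathscr{H}_{\alpha_k}\!\left(-\sqrt{\lambda/\sigma^2}\,(x - \mu/\lambda)\right)$, which reproduces the claimed series. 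Correspondingly, the eigenvalue condition $\mathscr{H}_{\alpha}(-\tilde b) = 0$ turns into $\mathscr{H}_{\alpha}\!\left(-\sqrt{\lambda/\sigma^2}\,(b-\mu/\lambda)\right) = 0$, and the coefficient formula $c_k = -1/[\alpha_k\,\partial_{\alpha_k}\mathscr{H}_{\alpha_k}(-\tilde b)]$ becomes the stated expression after the same replacement of $\tilde b$.

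Since both ingredients are already available from the preceding results, I do not anticipate any genuine obstacle; the only point requiring attention is purely notational bookkeeping. In particular I would make sure that the factor $\lambda$ arising from the time-change $\tilde t = \lambda t$ is carried correctly into the exponent, and that the spatial argument of the Hermite function and of its $\alpha$-derivative is consistently the transformed variable $\tilde x$ (respectively $\tilde b$) rather than the original $x$ (respectively $b$). Once these substitutions are tracked faithfully, the asserted identity follows at once.
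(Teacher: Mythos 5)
Your proposal is correct and follows exactly the paper's own argument: the paper likewise obtains the corollary by substituting the parameter transformations of Lemma \ref{OUtransform} into the series of Theorem \ref{OUhit}. Your write-up merely spells out the bookkeeping of $\tilde t$, $\tilde x$ and $\tilde b$ more explicitly, which is fine.
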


\begin{proof}
	Based on the relationship between the survival functions of the homogeneous, respectively, the standardized OU-process in Lemma \ref{OUtransform}, one can obtain this result by substituting the parameters in Lemma \ref{OUtransform} into Theorem \ref{OUhit}. 
\end{proof}

In the following theorem, we show the absolute convergence of the  infinite series (\ref{homoOUFPT}) and the bound of the truncation error utilising Corollary \ref{OUhitformula}. 

\begin{theorem}\label{thm_error}
The infinite series in formula (\ref{homoOUFPT}) is absolutely convergent. As $K\rightarrow \infty$, the truncated series $\sum\limits_{k = 1}^{K} c_k \e^{-\lambda \alpha_k  t } \mathscr{H}_{\alpha_k}\left( -\sqrt{\frac{\lambda}{\sigma^2}}\left( x-\frac{\mu}{\lambda} \right) \right)$ has truncation error $O\left( \e^{ -2K \lambda t } \right)$. 
Moreover, the absolute value of the truncation error is bounded by
$$\epsilon(\alpha_K) = \frac{\exp\left(\frac{{x^\prime}^2 - {b^\prime}^2}{2}\right)}{\sqrt{2}|b^\prime|}\left[ \frac{\exp\left(-\lambda t \alpha_K\right)}{\alpha_K} + (1-\lambda t )\Gamma(0, \lambda t \alpha_K) \right],$$
	where $x^\prime = \sqrt{\frac{\lambda}{\sigma^2}}\left( x-\frac{\mu}{\lambda} \right)$, ${b^\prime} = \sqrt{\frac{\lambda}{\sigma^2}}\left( b-\frac{\mu}{\lambda} \right)$, and $\Gamma(a,x)$ is the upper incomplete Gamma function with parameter $a$.
\end{theorem}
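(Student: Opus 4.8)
The plan is to prove all three assertions—absolute convergence, the $O(\e^{-2K\lambda t})$ truncation rate, and the explicit bound $\epsilon(\alpha_K)$—by first pinning down the growth of the eigenvalues $\alpha_k$ and then producing a summable envelope for the general term $c_k\,\e^{-\lambda\alpha_k t}\mathscr{H}_{\alpha_k}(-x^\prime)$ whose tail can be evaluated in closed form. By Corollary \ref{OUhitformula} it is enough to work with the standardized process, so throughout I keep the transformed data $x^\prime=\sqrt{\lambda/\sigma^2}\,(x-\mu/\lambda)$ and $b^\prime=\sqrt{\lambda/\sigma^2}\,(b-\mu/\lambda)$, treat the $\alpha_k$ as the ordered zeros of $\alpha\mapsto\mathscr{H}_\alpha(-b^\prime)$, and use $c_k=-1/[\alpha_k\,\partial_{\alpha_k}\mathscr{H}_{\alpha_k}(-b^\prime)]$.

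First I would determine how fast $\alpha_k$ grows. Conjugating the Hermite equation (\ref{hermite}) by $\phi=\e^{\tilde x^2/2}\psi$ turns it into the harmonic-oscillator Schr\"odinger form $\psi^{\prime\prime}+(2\alpha+1-\tilde x^2)\psi=0$ on $(-\infty,b^\prime]$, with a Dirichlet condition at $b^\prime$ and decay at $-\infty$. A Bohr--Sommerfeld/WKB count between the turning points $\pm\sqrt{2\alpha+1}$ gives the quantization rule $\tfrac{\pi}{4}(2\alpha_k+1)+b^\prime\sqrt{2\alpha_k+1}+o(\sqrt{\alpha_k})=k\pi$, whence $\alpha_k=2k+O(\sqrt k)$ and, in particular, the consecutive spacing tends to $2$ (consistent with the case $b^\prime=0$, where the surviving eigenfunctions are the odd Hermite functions and $\alpha_k=2k-1$). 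This already yields $\e^{-\lambda\alpha_k t}\sim\e^{-2k\lambda t}$, which is the source of the claimed $O(\e^{-2K\lambda t})$ rate once the prefactors are controlled.

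Next I would bound the general term. Writing $\mathscr{H}_{\alpha}(-\tilde x)=N_{\alpha}\,\e^{\tilde x^2/2}\psi_\alpha(\tilde x)$ for the decaying-at-$-\infty$ oscillator solution $\psi_\alpha$ and using $\psi_{\alpha_k}(b^\prime)=0$, the normalization $N_{\alpha_k}$ cancels between numerator and the order-derivative in the denominator, leaving
\begin{align*}
c_k\,\mathscr{H}_{\alpha_k}(-x^\prime)=-\frac{\e^{({x^\prime}^2-{b^\prime}^2)/2}\,\psi_k(x^\prime)}{\alpha_k\,\partial_\alpha\psi_\alpha(b^\prime)\big|_{\alpha_k}}.
\end{align*}
A Sturm--Liouville (Hellmann--Feynman) identity, obtained by differentiating the oscillator equation in $\alpha$ and integrating the resulting Wronskian, gives $\partial_\alpha\psi_\alpha(b^\prime)|_{\alpha_k}=2\big(\int_{-\infty}^{b^\prime}\psi_k^2\big)/\psi_k^\prime(b^\prime)$, while the interior WKB amplitude yields $|\psi_k(x^\prime)|=O(\alpha_k^{-1/4})$ and the nodal slope $|\psi_k^\prime(b^\prime)|=O(\alpha_k^{1/4})$. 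Carried to the next order, these combine into
\begin{align*}
\big|c_k\,\mathscr{H}_{\alpha_k}(-x^\prime)\big|\le\frac{\exp\!\big(({x^\prime}^2-{b^\prime}^2)/2\big)}{\sqrt2\,|b^\prime|}\left(\frac1{\alpha_k}+\frac1{\alpha_k^{2}}\right),
\end{align*}
in which the Gaussian factor is exactly the ratio of the conjugation weights at $x^\prime$ and $b^\prime$.

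Finally I would sum the tail. Since $g(\alpha):=\e^{-\lambda t\alpha}(\alpha^{-1}+\alpha^{-2})$ is decreasing and the spacing satisfies $\alpha_k-\alpha_{k-1}\ge1$ eventually, the integral test gives $\sum_{k>K}g(\alpha_k)\le\int_{\alpha_K}^{\infty}g(\alpha)\,\rd\alpha$, and a one-line integration by parts (recalling $\Gamma(0,z)=\int_z^\infty u^{-1}\e^{-u}\,\rd u$) evaluates the integral as
\begin{align*}
\int_{\alpha_K}^{\infty}\e^{-\lambda t\alpha}\Big(\tfrac1\alpha+\tfrac1{\alpha^2}\Big)\,\rd\alpha=\frac{\e^{-\lambda t\alpha_K}}{\alpha_K}+(1-\lambda t)\,\Gamma(0,\lambda t\alpha_K),
\end{align*}
so the tail is dominated by $\epsilon(\alpha_K)$. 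Absolute convergence is then immediate, the envelope being summable for every $t>0$ because $\alpha_k\sim2k$; and the order claim follows from $\Gamma(0,z)\sim\e^{-z}/z$, which gives $\epsilon(\alpha_K)=O(\alpha_K^{-1}\e^{-\lambda t\alpha_K})=O(\e^{-2K\lambda t})$. \textbf{The main obstacle} is the third paragraph: upgrading the heuristic WKB/parabolic-cylinder asymptotics for $\mathscr{H}_{\alpha_k}(-x^\prime)$ and for $\partial_\alpha\mathscr{H}_{\alpha_k}(-b^\prime)$ into a \emph{rigorous, constant-tracked} envelope, since the precise prefactor $1/(\sqrt2\,|b^\prime|)$ and the $\alpha_k^{-2}$ correction must be controlled uniformly in $k$ in order to reproduce $\epsilon(\alpha_K)$ exactly rather than merely up to an unspecified constant; this is where the sharp Hermite-function asymptotics of \cite{Abramowitz}, not leading-order WKB alone, are needed.
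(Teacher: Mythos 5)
Your proposal follows the same three-step skeleton as the paper's proof: (i) establish $\alpha_k\sim 2k$, (ii) produce a term-wise envelope of the form $\frac{\exp(({x^\prime}^2-{b^\prime}^2)/2)}{\sqrt{2}|b^\prime|}\big(\alpha_k^{-1}+\alpha_k^{-2}\big)\e^{-\lambda t\alpha_k}$ for $|c_k\,\e^{-\lambda\alpha_k t}\mathscr{H}_{\alpha_k}(-x^\prime)|$, and (iii) sum the tail by the integral test and identify the result with the incomplete-gamma expression $\epsilon(\alpha_K)$; steps (i) and (iii) match the paper essentially verbatim (the paper's Equation (\ref{eqn_alpha}) is the explicit version of your Bohr--Sommerfeld count, and its final display is exactly your $\int_{\alpha_K}^\infty\e^{-\lambda t\alpha}(\alpha^{-1}+\alpha^{-2})\,\rd\alpha$ computation). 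The genuine difference is in step (ii). The paper does not conjugate to the oscillator or invoke a Hellmann--Feynman identity; it substitutes Lebedev's large-order asymptotic for $\mathscr{H}_{\alpha}(-x^\prime)$ and for $\partial_\alpha\mathscr{H}_{\alpha}(-b^\prime)$ directly into $c_k\mathscr{H}_{\alpha_k}(-x^\prime)$, observes that the quantization condition forces $|\sin(2b^\prime\sqrt{\alpha_k/2+1/4}-\alpha_k\pi/2)|=1$ at the eigenvalues while $|\cos(\cdot)|\le 1$ at $x^\prime$, and reads off the envelope $\exp(({x^\prime}^2-{b^\prime}^2)/2)\,\sqrt{\alpha_k+1}/(\sqrt{2}|b^\prime|\alpha_k^2)$ with the constant $1/(\sqrt{2}|b^\prime|)$ coming from the factor $\pi/2+b^\prime\alpha_k/\sqrt{\alpha_k/2+1/4}$ in the denominator. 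Your Sturm--Liouville route is a legitimate and arguably more structural way to handle $\partial_\alpha\mathscr{H}_\alpha(-b^\prime)$ at a zero, but as you yourself flag, at leading WKB order it only delivers the envelope up to an unspecified constant; recovering the exact prefactor and the $\alpha_k^{-2}$ correction requires the subleading amplitude, which is precisely what the Lebedev expansion packages and what the paper quotes. So the obstacle you name is real, and the paper's resolution is to bypass it by citing the sharp asymptotic rather than deriving it; with that substitution your argument closes and coincides with the paper's.
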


\begin{proof}
	By \cite{Lebedev}, as $\alpha_k\rightarrow\infty$, which means $k\rightarrow\infty$, we have
	\begin{align*}
	\mathscr{H}_{\alpha_k}\left( -x^\prime \right) = 2^{\alpha_k + 1/2} \e^{{x^\prime}^2 - \alpha_k/2-1/4} \left( \frac{\alpha_k}{2} + \frac{1}{4} \right)^{\alpha_k/2 } 
	\cos\left( 2x^\prime \sqrt{\frac{\alpha_k}{2} + \frac{1}{4}} - \frac{\alpha_k\pi}{2} \right)\left[ 1 + O\left( \frac{1}{\sqrt{\alpha_k/2+1/4}} \right) \right].
	\end{align*}
	Hence, for a large enough $k \in \mathbb{N}$, we have 
	\begin{align*}
	c_k=& -\left[1 +  O\left( \frac{1}{\sqrt{\alpha_k/2+1/4}} \right)\right] \cdot \left[ \alpha_k 2^{\alpha_k + \frac{1}{2}} \exp\left(\frac{{b^\prime}^2}{2} -\frac{\alpha_k}{2}-\frac{1}{4}\right)\left( \frac{\alpha_k}{2}+\frac{1}{4} \right)^{\alpha_k/2} \right.\\
	&\left. \qquad \times \sin\left( 2{b^\prime} \sqrt{\frac{\alpha_k}{2}+\frac{1}{4}} - \frac{\alpha_k\pi}{2} \right) \left(\frac{\pi}{2} + \frac{{b^\prime}\alpha_k}{\sqrt{\alpha_k/2+1/4}} \right) \right]^{-1}.
	\end{align*}
	For $k\rightarrow\infty$, we obtain the asymptotic behaviour of $\alpha_k$ 
	\begin{align}
	\alpha_k = 2k + 1 + \frac{4{b^\prime}^2}{\pi^2} +\frac{2{b^\prime}}{\pi}\sqrt{ 4k + 3 + \frac{4{b^\prime}^2}{\pi^2} }. \label{eqn_alpha}
	\end{align}
Therefore, for large enough $K\in \mathbb{N}$, the exact truncation error of Equation (\ref{homoOUFPT}) is
	\begin{align*}
	\sum_{k=K}^\infty c_k \mathscr{H}_{\alpha_k}\left( -{x^\prime} \right) \e^{-\lambda \alpha_k t}
	=\sum_{k=K}^\infty -\exp\left(\frac{{x^\prime}^2 - {b^\prime}^2}{2}\right) \frac{\cos\left( 2{x^\prime} \sqrt{\alpha_k/2+1/4} - \frac{\alpha_k\pi}{2} \right) \e^{-\lambda \alpha_k t} }{\alpha_k \sin\left( 2{b^\prime} \sqrt{\alpha_k/2+1/4} - \frac{\alpha_k\pi}{2} \right) \left( \frac{\pi}{2} + \frac{{b^\prime}\alpha_k}{\sqrt{\alpha_k/2+1/4}} \right)}.
	\end{align*}
	Since large $\alpha_k$'s satisfy Equation
	(\ref{eqn_alpha}), we have
	\begin{align*}
	\left|\sin\left( 2{b^\prime} \sqrt{\frac{\alpha_k}{2}+\frac{1}{4}} + \frac{\alpha_k\pi}{2} \right)\right| = 1.
	\end{align*}
	Therefore, we have the asymptotic inequality 
	\begin{align*}
	&\sum_{k=K}^\infty \left| c_k \mathscr{H}_{\alpha_k}\left( -{x^\prime} \right) \e^{-\lambda \alpha_k t} \right|\\
	 &\leq  \sum_{k=K}^\infty  \frac{ \exp\left(\frac{{x^\prime}^2 - {b^\prime}^2}{2}-\lambda \alpha_k t\right) }{\alpha_k \left| \frac{\pi}{2} + \frac{{b^\prime}\alpha_k}{\sqrt{\alpha_k/2+1/4}} \right|} \leq \exp\left(\frac{{x^\prime}^2 - {b^\prime}^2}{2}\right) C_1 \sum_{k=K}^\infty \frac{ \exp\left(-\lambda \alpha_k t\right) }{\alpha_k^{3/2}}\\
	&\leq \exp\left(\frac{{x^\prime}^2 - {b^\prime}^2}{2}\right) C_1 \exp\left(-\lambda \alpha_K t\right) \sum_{k=K}^\infty \frac{ 1 }{\alpha_k^{3/2}}\leq \exp\left(\frac{{x^\prime}^2 - {b^\prime}^2}{2}\right) C_1C_2 \exp\left(-\lambda \alpha_K t\right) \sum_{k=K}^\infty \frac{ 1 }{k^{3/2}} \\
	&\leq \exp\left(\frac{{x^\prime}^2 - {b^\prime}^2}{2}\right) C_1C_2C_3 \exp\left(-\lambda \alpha_K t\right) =  O\left( \exp\left(-\lambda \alpha_K t\right) \right) = O\left( \e^{ -2K \lambda t } \right).
	\end{align*}
	Since $\alpha_K > 0$, we have 
	\begin{align*}
	&\sum_{k=K}^\infty \left| c_k \mathscr{H}_{\alpha_k}\left( -{x^\prime} \right) \e^{-\lambda \alpha_k t} \right|\\
	&\leq  \sum_{k=K}^\infty  \frac{ \exp\left(\frac{{x^\prime}^2 - {b^\prime}^2}{2}-\lambda \alpha_k t\right) }{\alpha_k \left| \frac{\pi}{2} + \frac{{b^\prime}\alpha_k}{\sqrt{\alpha_k/2+1/4}} \right|} \leq \exp\left(\frac{{x^\prime}^2 - {b^\prime}^2}{2}\right)\sum_{k=K}^\infty \frac{ \exp\left(-\lambda \alpha_k t\right) \sqrt{\alpha_k + 1} }{ {{\sqrt{2} |b^\prime|}\alpha_k^2} } \\
	& \leq \exp\left(\frac{{x^\prime}^2 - {b^\prime}^2}{2}\right) \int_{\alpha_K}^{\infty} \frac{ \exp\left(-\lambda \alpha t\right) ({\alpha + 1}) }{ {{\sqrt{2} |b^\prime|}\alpha^2} } \rd \alpha = \frac{\exp\left(\frac{{x^\prime}^2 - {b^\prime}^2}{2}\right)}{\sqrt{2}|b^\prime|}\left[ \frac{\exp\left(-\lambda t \alpha_K\right)}{\alpha_K} + (1-\lambda t )\Gamma(0, \lambda t \alpha_K) \right].
	\end{align*}
\end{proof}
Theorem \ref{thm_error} gives an upper bound for the truncation error. One can determine the terms to be kept in order to achieve a specific accuracy level with an explicit function. An example of the error and its upper bound can be found in Figure \ref{err_bnd}.
\begin{figure}[H]
	\centering
	\includegraphics[width=0.5\textwidth]{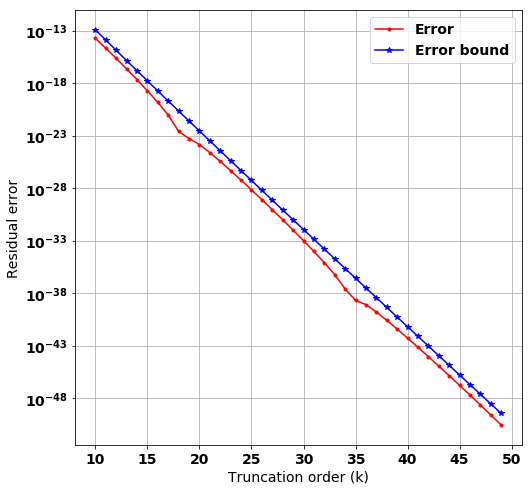}
	\caption{The log-scale plot between the truncation terms and the error when $x^\prime = 0$ and $b^\prime = 1$. }\label{err_bnd}
\end{figure}
To use the infinite series approximation for a probability density function, one needs to analyze how many terms one needs to keep in order to attain a certain precision level for the approximated distribution. Since one can transform the homogeneous OU-process barrier-crossing to a standardized OU-process barrier-crossing problem, see Lemma \ref{OUtransform}, we here study the truncation precision of the standardized OU-process. To this end, we proceed with Algorithm \ref{algor_color} in the appendix to study the relationship among the initial values of the process, barrier levels and the number of truncations. We plot the number of truncations required for various initial values and barrier levels in Figure \ref{color_plot1}, where the $\alpha$-zeros are taken in the interval $[0, 70]$. We observe that when the barrier level is far away from the initial value, the number of truncations required becomes smaller. Figure \ref{color_plot1} can be treated as a benchmark to determine how many terms one should truncate for a required quantile precision level. 
\begin{figure}[H]
	\centering
	\includegraphics[width=0.5\textwidth]{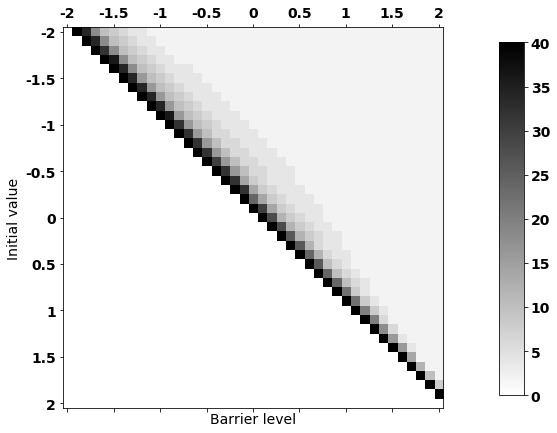}
	\caption{Relationship between the process initial values, barrier levels and the number of truncations. Here, the required quantile precision level is the median with relative error tolerance of $5\%$. }\label{color_plot1}
\end{figure}
\begin{remark}
	The $\alpha$-zeros can be obtained by the bisection method. However, these $\alpha$-zeros do not need to be obtained with high precision. Through the numerical test, we notice that if the $\alpha$-zeros are accurate up to $10^{-4}$, the approximation can be stable and reliable. 
\end{remark}
\begin{remark}
	For a barrier level $b$ which is larger than $5$, numerically solving the higher orders of $\alpha$-zeros (for $\alpha \geq 70$) becomes unstable. This is due to the value of $\mathscr{H}_{\alpha}(-b)$ becoming too large to be stored on a computer, leading to the overflow of the mantissa under double precision. For these cases, if the initial value is not next to the barrier level, one can truncate with fewer terms. This cannot cause larger errors due to the empirical results shown in Figure \ref{color_plot1}, in which one only needs to truncate with a few terms to reach a 5\% quantile precision. 
\end{remark}
\subsection{Tail behaviour of the FPT distribution for a homogeneous OU-process passing a constant barrier}
With the given infinite series representation in Equation (\ref{homoOUFPT}), we now analyze the property of the FPT distribution for a homogeneous process passing a constant barrier. With the method given on page 114 of \cite{Garethbook}, its tail behaviour can be characterized by the ``hazard rate function''. 
\begin{lemma}
	The distribution of the FPT of OU-process (\ref{homoOUeqn}) to a constant barrier $b$ is light-tailed, that is, the exponential moments exist up to the $\lambda \alpha_1$ order, where $\alpha_1$ is given in Corollary \ref{OUhitformula}. i.e. 
	$$\mathbb{E}\left[\e^{\theta \tau_{{X}, {b}}}\right]<\infty, \qquad \forall \theta < \lambda \alpha_1.$$ 
\end{lemma}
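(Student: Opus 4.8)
The plan is to extract the exponential decay rate of the survival function directly from the eigenfunction expansion of Corollary~\ref{OUhitformula}, and then feed this tail estimate into the hazard-rate criterion for light tails indicated on p.~114 of \cite{Garethbook}. The whole argument hinges on the fact that the slowest-decaying term in the series governs the tail, and that its decay rate is exactly $\lambda\alpha_1$.

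First I would isolate the leading term. Writing $x'=\sqrt{\lambda/\sigma^2}(x-\mu/\lambda)$ and using the ordering $0<\alpha_1<\alpha_2<\cdots$, I multiply the series by $\e^{\lambda\alpha_1 t}$: every term with $k\ge 2$ acquires the strictly negative exponent $-\lambda(\alpha_k-\alpha_1)t$ and vanishes as $t\to\infty$, while the remainder of the tail of the series is controlled uniformly by Theorem~\ref{thm_error} (applied with $K=2$), which gives an $O(\e^{-2\lambda t})$ bound on the sum beyond the first term. This legitimises the interchange of limit and summation and yields
\begin{align*}
\lim_{t\to\infty}\e^{\lambda\alpha_1 t}\,\bar{F}_{\tau_{X,b}}(t,x)=c_1\,\mathscr{H}_{\alpha_1}(-x')=:C(x),
\end{align*}
so that $\bar{F}_{\tau_{X,b}}(t,x)\sim C(x)\,\e^{-\lambda\alpha_1 t}$ and, equivalently, $-\lim_{t\to\infty}t^{-1}\log\bar{F}_{\tau_{X,b}}(t,x)=\lambda\alpha_1$. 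In hazard-rate language this says the hazard rate tends to the strictly positive constant $\lambda\alpha_1$, which is precisely the signature of a light tail.

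Next I would convert this tail estimate into the existence of exponential moments by Tonelli's theorem, avoiding any appeal to the density. For $\theta>0$, since $\e^{\theta s}=1+\theta\int_0^{s}\e^{\theta t}\,\rd t$, taking $s=\tau_{X,b}$ and using $\PR(\tau_{X,b}>t)=\bar{F}_{\tau_{X,b}}(t,x)$ gives
\begin{align*}
\E\!\left[\e^{\theta\tau_{X,b}}\right]=1+\theta\int_0^\infty \e^{\theta t}\,\bar{F}_{\tau_{X,b}}(t,x)\,\rd t.
\end{align*}
By the tail asymptotic the integrand behaves like $C(x)\,\e^{(\theta-\lambda\alpha_1)t}$ for large $t$ and is bounded on $[0,1]$, so the integral converges for every $\theta<\lambda\alpha_1$ and diverges for $\theta\ge\lambda\alpha_1$. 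This delivers $\E[\e^{\theta\tau_{X,b}}]<\infty$ for all $\theta<\lambda\alpha_1$ and simultaneously identifies $\lambda\alpha_1$ as the abscissa of convergence of the moment generating function, establishing the claim.

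The hard part will be the rigorous justification of the leading-order asymptotic, and in particular ruling out the degenerate case $C(x)=0$, which would force the tail to decay faster than $\e^{-\lambda\alpha_1 t}$ and misidentify the abscissa. The interchange of the limit $t\to\infty$ with the infinite sum is secured by the absolute convergence and the explicit $K=2$ remainder bound of Theorem~\ref{thm_error}; the remaining point is positivity of $C(x)=c_1\mathscr{H}_{\alpha_1}(-x')$. This should follow from the oscillation theory of the Sturm--Liouville problem \eqref{hermite}--\eqref{hermiteCond}: the eigenfunction $\phi_1(\cdot)=\mathscr{H}_{\alpha_1}(-\,\cdot\,)$ associated with the smallest eigenvalue has no interior zero on $(-\infty,\tilde b)$, so $\mathscr{H}_{\alpha_1}(-x')>0$ for $x<b$; since $\bar{F}_{\tau_{X,b}}(t,x)>0$ for every $t$, the coefficient $c_1$ must carry the matching sign, whence $C(x)>0$.
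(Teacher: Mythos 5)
Your proposal is correct and rests on the same core mechanism as the paper's proof --- isolating the $k=1$ term of the eigenfunction expansion of Corollary \ref{OUhitformula} and identifying $\lambda\alpha_1$ as the exponential decay rate of the tail --- but the two arguments diverge in how they convert that observation into the existence of exponential moments, and your version is both more self-contained and more careful on one point. The paper computes the hazard rate $r(t)=-\partial_t\bar F/\bar F$ as a ratio of two exponential series, factors out $\e^{-\lambda\alpha_1 t}$ from numerator and denominator to get $\lim_{t\to\infty}r(t)=\lambda\alpha_1$, and then simply invokes the criterion from \cite{Garethbook} (Remark 3.6) that a positive limiting hazard rate implies light tails with exponential moments up to that limit. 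You instead establish the tail asymptotic $\bar F_{\tau_{X,b}}(t,x)\sim C(x)\,\e^{-\lambda\alpha_1 t}$ and finish with the elementary Tonelli identity $\E[\e^{\theta\tau_{X,b}}]=1+\theta\int_0^\infty\e^{\theta t}\bar F_{\tau_{X,b}}(t,x)\,\rd t$, which proves convergence for $\theta<\lambda\alpha_1$ (and divergence at $\theta\ge\lambda\alpha_1$) without appealing to the external hazard-rate criterion or to differentiability of $\bar F$. More importantly, you explicitly confront the degenerate case $C(x)=c_1\mathscr{H}_{\alpha_1}(-x')=0$: the paper's limit computation $B_1/C_1=\lambda\alpha_1$ tacitly assumes $C_1\neq 0$, and if that coefficient vanished the limiting hazard rate would be $\lambda\alpha_2$ or larger, so the stated abscissa would be a strict underestimate (the lemma's inequality $\E[\e^{\theta\tau}]<\infty$ for $\theta<\lambda\alpha_1$ would still hold, but the identification of $\lambda\alpha_1$ as the exact order would not). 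Your Sturm--Liouville argument --- the ground-state eigenfunction has no interior zero on $(-\infty,\tilde b)$, so $\mathscr{H}_{\alpha_1}(-x')\neq 0$ for $x<b$, and positivity of $\bar F$ then fixes the sign of $c_1$ --- closes this gap. One minor quibble: your invocation of Theorem \ref{thm_error} ``with $K=2$'' to control the remainder gives a bound of order $\e^{-\lambda\alpha_2 t}$ rather than $\e^{-2\lambda t}$ as literally written, but either way the remainder is $o(\e^{-\lambda\alpha_1 t})$, which is all the argument needs.
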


\begin{proof}
	We consider the hazard rate function given in \cite{Garethbook}, p. 114. The hazard rate function $r(t)$ for the FPT of OU-process (\ref{homoOUeqn}) is given by 
	\begin{align*}
	r(t) :=& \frac{-\partial_t \bar{F}_X(x, t;b) }{\bar{F}_X(x, t;b)} = 
	\frac{ \sum\limits_{k = 1}^{\infty} B_k \e^{-\lambda \alpha_k  t } }{\sum\limits_{k = 1}^{\infty} C_k \e^{-\lambda \alpha_k  t } }
	\end{align*}
	where 
	$B_k = \lambda \alpha_k c_k \mathscr{H}_{\alpha_k}( -\sqrt{\frac{\lambda}{\sigma^2}}( x-\frac{\mu}{\lambda}))$ and $C_k =c_k \mathscr{H}_{\alpha_k}( -\sqrt{\frac{\lambda}{\sigma^2}}( x-\frac{\mu}{\lambda}))$. 
	As shown in Remark 3.6 of \cite{Garethbook}, if $\lim_{t\rightarrow \infty} r(t) >0 $ exists, then the distribution is light-tailed and the exponential moments exist up to $\lim\inf_{t\rightarrow \infty} r(t)$. In our case, since $\lambda > 0$ is given by the definition of an OU-process and $\left\{\alpha_k\right\}_{k \in \mathbb{N}}$ are ordered positive solutions to the equation $$\mathscr{H}_{\alpha}\left( -\sqrt{\frac{\lambda}{\sigma^2}}\left( b-\frac{\mu}{\lambda} \right) \right) = 0$$ 
	with respect to $\alpha$, we have
	\begin{align*}
	\lim\limits_{t\rightarrow \infty} r(t) = \lim\limits_{t\rightarrow \infty} \frac{ \sum\limits_{k = 1}^{\infty} B_k \e^{-\lambda \alpha_k  t } }{\sum\limits_{k = 1}^{\infty} C_k \e^{-\lambda \alpha_k  t } } = \lim\limits_{t\rightarrow \infty} \frac{ B_1 + \sum\limits_{k = 2}^{\infty} B_k \e^{-\lambda (\alpha_k - \alpha_1)  t } }{C_1 + \sum\limits_{k = 2}^{\infty} C_k \e^{-\lambda (\alpha_k - \alpha_1)  t } } = \lambda \alpha_1. 
	\end{align*}
\end{proof}
\section{FPT transformation between an inhomogeneous and homogeneous OU-process crossing time-dependant barriers}\label{inhomo}
{In Section \ref{homo}, the eigenvalue expansion formulae is presented to compute the survival function of the FPT for a homogeneous OU-process passing a constant barrier. However, the homogeneous condition is usually too strong to model real-world events. For example, if we considered a stochastic process with a periodic feature, the time-homogeneity of a homogeneous process would be too rigid to allow the periodicity to be captured. In addition, barrier functions may be time-dependent, too, in reality. It is therefore natural and practically important to study the FPT for an inhomogeneous OU-process crossing a time-dependent barrier. 
\begin{definition}\label{hitmove}
	Let $(Z_t)_{t\geq0}$ be a continuous Markov process. The first-passage-time of $\left(Z_t\right)_{t\geq 0}$ with initial value $Z_0 = z$ to an upper time-dependent barrier $b(t)$, where $b(0) > z$, is defined by
	\begin{align}
	\mathscr{T}_{Z, b(t)}:=\inf\left\{ t\geq 0 : Z_t \geq b(t) \right\}.
	\end{align}
	The survival function of $\mathscr{T}_{Z, b(t)}$ is denoted by $\bar{F}_{\mathscr{T}_{Z, b(t)}}(t;z)$ and is given by
	\begin{align}
	\bar{F}_{\mathscr{T}_{Z, b(t)}}(t;z) = \mathbb{P}\left( \mathscr{T}_{Z, b(t)} > t \right). 
	\end{align}
\end{definition}
We focus in particular on the inhomogeneous OU-process, which is defined as follows.
\begin{definition}\label{inOUdef}
	Let $(W_t)_{t\geq 0}$ denote Brownian motion on the probability space $\left( \Omega, \mathscr{F}, \mathbb{P} \right)$. A solution $(Y_t)_{t\geq 0}$ to the stochastic differential equation 
	\begin{align}\label{OU}
	\rd  Y_t &= \left( \mu(t) - \lambda(t) Y_t \right)\rd  t + {\sigma(t)} \rd  W_t,
	\end{align}
	where $Y_0 = y \in \mathbb{R}$, is called an inhomogeneous Ornstein-Uhlenbeck process. For $\mu(t): \mathbb{R}^{+} \rightarrow \mathbb{R}$, $\lambda(t): \mathbb{R}^{+} \rightarrow \mathbb{R}^+$ and $\sigma(t): \mathbb{R}^{+} \rightarrow \mathbb{R}^+$ satisfying (i) $\left| \mu(t) - \lambda(t) y' \right| + \left| \sigma(t) \right| \leq C (1 + \left| y' \right|)$ for all $y'\in\R$ and $C\in \R$, and (ii) $\lambda(t)$ is bounded, $\forall t \geq 0$, the solution $(Y_t)_{t\geq 0}$ exists and is unique. 
\end{definition}
By Theorem 5.3.2 in \cite{oksendal}, the properties $(i)$ and $(ii)$ in Definition \ref{inOUdef} ensure that the SDE \ref{OU} has a unique $t$-continuous solution. A sufficient condition for $t$-continuity is for $\mu(t)$, $\lambda(t)$ and $\sigma(t)$ to be bounded. Next we show that the FPT distribution of an inhomogeneous OU-process crossing a time-dependent barrier is equivalent to the FPT distribution of a standardized OU-process crossing another time-dependent barrier.
	\begin{definition}\label{funcs}
		The mean-reverting scaling function $\alpha(t): \mathbb{R}^+ \rightarrow \mathbb{R}^+$, the shift function $\beta(t): \mathbb{R}^+ \rightarrow \mathbb{R}$, and the time-compensation function $\gamma(t): \mathbb{R}^+ \rightarrow \mathbb{R}^+$ are specified by:
		\begin{itemize}
			\item[a)] $\alpha(t)$, $\beta(t)$ and $\gamma(t) \in C^1(\mathbb{R^+})$ for $t >0$;
			\item[b)] $\gamma(t)$ is increasing for $t >0$;
			\item[c)] $\alpha(t)$, $\beta(t)$ and $\gamma(t)$ satisfy the ODE system
			\begin{align}\label{ode}
			\left\{
			\begin{array}{ll}
			\sigma(\gamma(t))\alpha(t)\sqrt{\gamma^\prime(t)} &= 1\\
			\lambda(\gamma(t))\gamma^\prime(t) - \frac{\alpha^\prime(t)}{\alpha(t)} &= 1\\
\beta^\prime(t)+\beta(t)-\alpha(t)\mu(\gamma(t))\gamma^\prime(t)&=0			
			\end{array}
			\right.
			\end{align}
			subject to the initial condition
			\begin{align}
			\alpha(0) = \alpha_0 \in \mathbb{R}^+ , \qquad \beta(0) = \beta_0 \in \mathbb{R}, \qquad \gamma(0) = 0
			\end{align} 
			where the constants $\alpha_0$ and $\beta_0$ are pre-determined.
		\end{itemize}
		The time-dependent parameters $\mu(t)$, $\lambda(t)$ and $\sigma(t)$ are specified in Definition \ref{inOUdef}.
	\end{definition}
\begin{lemma}\label{sufficient}
	Sufficient conditions for the uniqueness and existence of $\alpha(t)$, $\beta(t)$ and $\gamma(t)$ are ensured as follows:
	For $\mu(t)$, $\lambda(t)$ and $\sigma(t)$ given in Definition \ref{inOUdef}, if $\lambda(t) \in C^1\left( \mathbb{R}^+ \right)$ and $\sigma(t) \in C^2\left( \mathbb{R}^+ \right)$, then the ODE system (\ref{ode}) has a unique local solution with initial conditions $\alpha(0) = \alpha_0 \in \mathbb{R}^+$, $\beta(0) = \beta_0 \in \mathbb{R}$, and $\gamma(0) = 0$.
\end{lemma}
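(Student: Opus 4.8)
The plan is to exploit the triangular structure of the system (\ref{ode}): the first equation determines $\alpha$ algebraically from $\gamma$ and $\gamma'$, substituting this relation into the second equation yields a closed second-order ODE for $\gamma$ alone, and the third equation is then a linear first-order ODE for $\beta$ driven by the now-known functions $\alpha$ and $\gamma$. First I would solve the first equation for $\alpha$, obtaining
\begin{align*}
\alpha(t) = \frac{1}{\sigma(\gamma(t))\sqrt{\gamma^\prime(t)}},
\end{align*}
which is legitimate near $t=0$ provided $\gamma^\prime>0$ there. Evaluating at $t=0$ with $\gamma(0)=0$ and $\alpha(0)=\alpha_0$ fixes the initial velocity $\gamma^\prime(0)=1/(\sigma(0)^2\alpha_0^2)>0$, so the square root is well-defined on a neighbourhood of the origin.

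Next I would take logarithms and differentiate the expression for $\alpha$, giving $\alpha^\prime/\alpha = -\sigma^\prime(\gamma)\gamma^\prime/\sigma(\gamma) - \tfrac{1}{2}\gamma^{\prime\prime}/\gamma^\prime$. Inserting this into the second equation of (\ref{ode}) and solving for $\gamma^{\prime\prime}$ produces the autonomous second-order equation
\begin{align*}
\gamma^{\prime\prime} = 2\gamma^\prime - 2\lambda(\gamma)(\gamma^\prime)^2 - \frac{2\sigma^\prime(\gamma)}{\sigma(\gamma)}(\gamma^\prime)^2.
\end{align*}
Setting $u=\gamma$, $v=\gamma^\prime$ recasts this as the first-order system $u^\prime=v$, $v^\prime = 2v - 2\lambda(u)v^2 - 2(\sigma^\prime(u)/\sigma(u))v^2$, with initial data $u(0)=0$, $v(0)=1/(\sigma(0)^2\alpha_0^2)$. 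One then checks the converse direction: any $\gamma$ solving this system, together with $\alpha$ defined as above, satisfies the first two equations of (\ref{ode}) and the prescribed initial values (in particular $\alpha(0)=\alpha_0$ is automatic), so the reduction is a genuine equivalence.

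The core of the argument is to verify that the vector field $(u,v)\mapsto\bigl(v,\;2v-2\lambda(u)v^2-2(\sigma^\prime(u)/\sigma(u))v^2\bigr)$ is locally Lipschitz, whence the Picard--Lindel\"of (Cauchy--Lipschitz) theorem delivers a unique local solution. This is where the regularity hypotheses enter and where the main (if routine) obstacle lies: the assumption $\sigma\in C^2(\mathbb{R}^+)$ guarantees that $\sigma^\prime$ is $C^1$ and hence locally Lipschitz, while $\sigma>0$ ensures $1/\sigma$ is smooth, and $\lambda\in C^1(\mathbb{R}^+)$ makes $\lambda$ locally Lipschitz; the polynomial dependence on $v$ is smooth, so products and compositions of these functions are locally Lipschitz on bounded sets. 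By continuity of $v$ and $v(0)>0$, the solution keeps $\gamma^\prime>0$ on a neighbourhood of $0$, which simultaneously secures the validity of the square-root reduction and property (b) that $\gamma$ is increasing.

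Finally, with $\alpha$ and $\gamma$ in hand I would treat the third equation as the linear ODE $\beta^\prime + \beta = \alpha(t)\mu(\gamma(t))\gamma^\prime(t)$, whose right-hand side is continuous, and write down its unique solution
\begin{align*}
\beta(t) = \e^{-t}\left(\beta_0 + \int_0^t \e^{s}\,\alpha(s)\mu(\gamma(s))\gamma^\prime(s)\,\rd s\right).
\end{align*}
Uniqueness of the full triple then follows by reversing the chain: any solution of (\ref{ode}) forces $\gamma$ to solve the reduced second-order problem with the fixed initial data, hence $\gamma$ is unique by Picard--Lindel\"of, $\alpha$ is determined by the first equation, and $\beta$ by the linear formula above.
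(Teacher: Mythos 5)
Your proposal is correct and follows essentially the same route as the paper: eliminate $\alpha$ via the first equation, reduce the first two equations to a second-order ODE for $\gamma$ (identical to the paper's $\gamma''/\gamma' = 2 - 2\gamma'(\lambda(\gamma)+\sigma'(\gamma)/\sigma(\gamma))$), invoke Picard--Lindel\"of, recover $\alpha$, and solve the linear first-order ODE for $\beta$. Your version is in fact slightly more careful than the paper's, since you pin down the initial velocity $\gamma'(0)=1/(\sigma(0)^2\alpha_0^2)$ and explicitly verify the local Lipschitz hypothesis, both of which the paper leaves implicit.
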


\begin{proof}
	The first two equations can be rearranged such that 
	\begin{align*}
	&\alpha(t) = \frac{1}{\sigma(\gamma(t))\sqrt{\gamma^\prime(t)}},&
	&\lambda(\gamma(t))\gamma^\prime(t) = 1 + \frac{\alpha^\prime(t)}{\alpha(t)}.&
	\end{align*}
	Substituting the first equation into the second, we obtain
	$$\frac{\gamma^{\prime\prime}(t)}{\gamma^\prime(t)} = 2-2\gamma^\prime(t)\left(\lambda(\gamma(t))+\frac{\sigma^\prime(\gamma(t))}{\sigma(\gamma(t))}\right).$$
	The equation can be written in the form
	$$\rd \ln(\gamma^\prime(t)) = 2 \rd t - 2\left(\lambda(\gamma(t))+\frac{\sigma^\prime(\gamma(t))}{\sigma(\gamma(t))} \right) \rd \gamma(t). $$
	Thus,
	$$\gamma\prime(t) = C\exp \left( 2t - 2\int^{\gamma(t)}_{0} \left(\lambda(y)+\frac{\sigma^\prime(y)}{\sigma(y)} \right) \rd y \right)$$
	where $C$ is a constant specified for a given initial condition, see \cite{ODEs}. 
	By the Picard-Lindel\"of theorem, see \cite{Lindelof}, the unique local solution $\gamma(t)$ is guaranteed, which in turn, by the first equation in (\ref{ode}), implies  the solution $\alpha(t)$ also exists and is unique. Since 
	$$\beta^\prime(t)+\beta(t)-\alpha(t)\mu(\gamma(t))\gamma^\prime(t)=0$$
	is a first-order linear ODE with respect to $\beta(t)$, whose solution is guaranteed to be unique, the ODE system (\ref{ode}) has a unique solution. 
\end{proof}

We note here that the necessary and sufficient conditions for the uniqueness and existence of $\alpha(t)$, $\beta(t)$ and $\gamma(t)$ are non-trivial. For $\alpha(t)$, $\beta(t)$ and $\gamma(t)$ specified as in Definition \ref{funcs}, an inhomogeneous OU-process is transformed into a standardized one as follows.

\begin{proposition}\label{inOUtransform}
	Consider the inhomogeneous OU-process $(Y_t)_{t\geq 0}$ given in Definition \ref{inOUdef}. Assume $\alpha(t), \beta(t)$ and $\gamma(t)$, in Definition \ref{funcs}, satisfy the sufficient conditions in Lemma \ref{sufficient}. Then the transformed process $(\alpha(t)Y_{\gamma(t)} - \beta(t))_{t\geq 0}$ is a standardized OU-process $(\widetilde{X}_t)_{t\geq 0}$, almost surely, with initial value $\widetilde{X}_0 = \alpha_0 y- \beta_0$. 
\end{proposition}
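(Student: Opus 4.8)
The plan is to apply Itô's formula to the transformed process $\widetilde{X}_t = \alpha(t) Y_{\gamma(t)} - \beta(t)$ and then to observe that the three equations of the ODE system (\ref{ode}) are precisely what is needed to reduce the resulting dynamics to the standardized OU-equation (\ref{simOU}). I would first dispose of the initial condition: since $\gamma(0) = 0$, we have $Y_{\gamma(0)} = Y_0 = y$, whence $\widetilde{X}_0 = \alpha(0) y - \beta(0) = \alpha_0 y - \beta_0$, as claimed.

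The core of the argument is to determine the dynamics of the deterministically time-changed process $Y_{\gamma(t)}$. Writing
\[
Y_{\gamma(t)} = y + \int_0^{\gamma(t)}\left(\mu(s) - \lambda(s) Y_s\right)\rd s + \int_0^{\gamma(t)} \sigma(s)\, \rd W_s
\]
and performing the substitution $s = \gamma(u)$ in the drift integral (legitimate because $\gamma$ is $C^1$ and increasing) converts that term into $\int_0^t \left(\mu(\gamma(u)) - \lambda(\gamma(u)) Y_{\gamma(u)}\right)\gamma^\prime(u)\,\rd u$. The martingale part $M_t := \int_0^{\gamma(t)}\sigma(s)\,\rd W_s$ is a continuous local martingale with quadratic variation $\langle M\rangle_t = \int_0^{\gamma(t)}\sigma(s)^2\,\rd s = \int_0^t \sigma(\gamma(u))^2\, \gamma^\prime(u)\,\rd u$; by the Dambis--Dubins--Schwarz representation of a continuous martingale as a time-changed Brownian motion, there exists a Brownian motion $B$ for which $\rd M_t = \sigma(\gamma(t))\sqrt{\gamma^\prime(t)}\,\rd B_t$. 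Thus
\[
\rd \left(Y_{\gamma(t)}\right) = \left(\mu(\gamma(t)) - \lambda(\gamma(t)) Y_{\gamma(t)}\right)\gamma^\prime(t)\,\rd t + \sigma(\gamma(t))\sqrt{\gamma^\prime(t)}\,\rd B_t.
\]

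With this in hand I would apply the product rule to $\alpha(t) Y_{\gamma(t)}$; because $\alpha$ and $\beta$ are deterministic and $C^1$ there is no cross-variation correction, so
\[
\rd \widetilde{X}_t = \Big[\alpha^\prime(t) Y_{\gamma(t)} + \alpha(t)\big(\mu(\gamma(t)) - \lambda(\gamma(t)) Y_{\gamma(t)}\big)\gamma^\prime(t) - \beta^\prime(t)\Big]\rd t + \alpha(t)\sigma(\gamma(t))\sqrt{\gamma^\prime(t)}\,\rd B_t.
\]
The three equations of (\ref{ode}) now enter in succession: the first makes the diffusion coefficient equal to $1$; the second, rewritten as $\alpha(t)\lambda(\gamma(t))\gamma^\prime(t) = \alpha(t) + \alpha^\prime(t)$, collapses the $Y_{\gamma(t)}$-proportional part of the drift to $-\alpha(t) Y_{\gamma(t)}$; and the third, which gives $\alpha(t)\mu(\gamma(t))\gamma^\prime(t) - \beta^\prime(t) = \beta(t)$, collapses the remaining drift to $+\beta(t)$. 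Collecting terms yields $\rd \widetilde{X}_t = -\left(\alpha(t) Y_{\gamma(t)} - \beta(t)\right)\rd t + \rd B_t = -\widetilde{X}_t\,\rd t + \rd B_t$, which is exactly (\ref{simOU}).

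I expect the main obstacle to be the rigorous treatment of the time-changed stochastic integral: one must verify that $M_t$ is well-defined as a martingale with respect to the time-changed filtration $\mathscr{F}_{\gamma(t)}$, and construct the driving Brownian motion $B$ explicitly via the Dambis--Dubins--Schwarz theorem so that the identity holds on a single probability space. This is the source of the ``almost surely'' qualifier in the statement. Everything else reduces to the deterministic change of variables above together with the bookkeeping enforced by the ODE system, which is routine once the time-change has been justified.
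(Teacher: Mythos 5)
Your proposal is correct and follows essentially the same route as the paper: both derive the dynamics of the time-changed process $Y_{\gamma(t)}$ via the substitution $s=\gamma(u)$, apply the product rule to $\alpha(t)Y_{\gamma(t)}-\beta(t)$, and use the three equations of the ODE system (\ref{ode}) to reduce the drift and diffusion to those of (\ref{simOU}). The only difference is cosmetic: the paper passes through the explicit solution formula for $Y_t$ and leaves the time-change of the stochastic integral implicit, whereas you justify it via the Dambis--Dubins--Schwarz theorem, which is a slightly more careful treatment of the same step.
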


\begin{proof}
	The solution to the SDE (\ref{OU}) is given by
	\begin{align*}
	Y_t = \e^{-\int_{0}^{t}\lambda(u)\rd  u}\left[ y + \int_{0}^{t} \mu(s) \exp\left({\int_{0}^{s}\lambda(u)\rd  u}\right) \rd  s + \int_{0}^{t} {\sigma(s)} \exp\left({\int_{0}^{s}\lambda(u)\rd  u}\right) \rd  W_s \right].
	\end{align*}
	Since $\gamma(\cdot)$ satisfies the sufficient condition in Lemma \ref{sufficient}, $\gamma(\cdot)$ exists. Therefore,
	\begin{align*}
	Y_{\gamma(t)} = \e^{-\int_{0}^{\gamma(t)}\lambda(u)\rd  u} \left[ y + \int_{0}^{\gamma(t)} \mu(s) \exp\left({\int_{0}^{s}\lambda(u)\rd  u}\right) \rd  s +\int_{0}^{\gamma(t)} {\sigma(s)} \exp\left({\int_{0}^{s}\lambda(u)\rd  u}\right) \rd  W_s \right],
	\end{align*}
	and
	\begin{align*}
	Y_{\gamma(t)} = & \exp\left({-\int_{0}^{t}\lambda(\gamma(u))\gamma^\prime(u)\rd  u}\right)\left[ y + \int_{0}^{t} \mu(\gamma(s))\gamma^\prime(s) \right.
	\exp\left({\int_{0}^{s}\lambda(\gamma(u))\gamma^\prime(u)\rd  u}\right) \rd  s   \\
	&\hspace{4.5cm}\left.+\int_{0}^{t} {\sigma(\gamma(s))}\sqrt{\gamma^\prime(s)} \exp\left({\int_{0}^{s}\lambda(\gamma(u))\gamma^\prime(u)\rd  u}\right) \rd  W_s \right].
	\end{align*}
	It follows that
	\begin{align*}
	\rd Y_{\gamma(t)} =& \left[ \mu(\gamma(t))\gamma^\prime(t) - \lambda(\gamma(t))\gamma^\prime(t) Y_{\gamma(t)} \right] \rd t + \sigma(\gamma(t)) \sqrt{\gamma^\prime(t)} \rd W_t,
	\end{align*}
	and hence
	\begin{align*}
	\rd \widetilde{X}_t =& \left[ \alpha^\prime(t)Y_{\gamma(t)} - \beta^\prime(t) \right]\rd t + \alpha(t) \rd Y_{\gamma(t)}\\
	=& \left[ \alpha^\prime(t) \frac{\widetilde{X}_t + \beta(t)}{\alpha(t)} - \beta^\prime(t) + \alpha(t)\mu(\gamma(t))\gamma^\prime(t)-\alpha(t)\lambda(\gamma(t))\gamma^\prime(t) \frac{\widetilde{X}_t + \beta(t)}{\alpha(t)} \right] \rd t\\
	&\hspace{9cm}+\alpha(t) \sigma(\gamma(t)) \sqrt{\gamma^\prime(t)} \rd W_t \\
	=&-\widetilde{X}_t \rd t + \rd W_t.
	\end{align*}
	In the last step, Definition \ref{funcs} is used. Thus, $(\widetilde{X}_t)_{t \geq 0}$ is a standardized OU-process. 
\end{proof}

Now we are in the position to present the main theorem that links the FPT distribution functions of the inhomogeneous and the standardized OU-processes. 

\begin{theorem}\label{inOUtransSOU}
	Let $(Y_t)_{t\geq 0}$ be the inhomogeneous OU-process in Definition \ref{inOUdef}, and assume that the ODE (\ref{ode}) has a unique solution. Then,
	\begin{align}
	\bar{F}_{\mathscr{T}_{Y, b(t)}}(t;y) = \bar{F}_{\mathscr{T}_{\widetilde{X}, g(t)}}\left(\gamma^{-1}(t);\tilde{x}\right)
	\end{align}
	where $(\widetilde{X}_t)_{t\geq 0}$ is a standardized OU-process with initial value $\widetilde{X}_0 = \tilde{x} = \alpha_0 y - \beta_0$, and $$g(t)=\alpha(t)b(\gamma(t)) - \beta(t).$$ 
	An equivalent statement is to say $\mathscr{T}_{Y, b(t)}$ and $\gamma(\mathscr{T}_{\widetilde{X}, g(t)})$ are equal in distribution.
\end{theorem}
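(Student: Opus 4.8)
The plan is to reduce the first-passage event of the inhomogeneous process $(Y_t)$ to that of the standardized process $(\widetilde{X}_t)$ by means of the deterministic time-change $\gamma$, leaning on the pathwise identity already supplied by Proposition \ref{inOUtransform}. By Definition \ref{hitmove} the survival function is the no-crossing probability, so using path-continuity I would first write
$$\bar{F}_{\mathscr{T}_{Y, b(t)}}(t;y) = \mathbb{P}\left( Y_s < b(s), \ \forall s \in [0,t] \right).$$
Proposition \ref{inOUtransform} gives, almost surely, $\widetilde{X}_u = \alpha(u) Y_{\gamma(u)} - \beta(u)$ for all $u \geq 0$; since $\alpha(u) > 0$ by Definition \ref{funcs}, the inequality $\widetilde{X}_u \geq g(u) = \alpha(u) b(\gamma(u)) - \beta(u)$ is equivalent to $Y_{\gamma(u)} \geq b(\gamma(u))$. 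Thus the barrier constraint on $Y$ is transported verbatim, with the direction of the inequality preserved, onto the barrier $g$ for $\widetilde{X}$.

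Next I would exploit that $\gamma$ is a strictly increasing $C^1$ function with $\gamma(0)=0$ (properties (a) and (b) of Definition \ref{funcs}), hence a continuous bijection of $[0,\infty)$ onto its range, with continuous inverse $\gamma^{-1}$. Under this reparametrization the crossing sets correspond: the set $\{u \geq 0 : \widetilde{X}_u \geq g(u)\}$ is exactly the $\gamma^{-1}$-image of $\{s \geq 0 : Y_s \geq b(s)\}$. Taking infima, and using that a strictly increasing continuous map commutes with the infimum of a set, I obtain the almost-sure pathwise identity
$$\mathscr{T}_{\widetilde{X}, g(t)} = \gamma^{-1}\!\left( \mathscr{T}_{Y, b(t)} \right),$$
equivalently $\gamma(\mathscr{T}_{\widetilde{X}, g(t)}) = \mathscr{T}_{Y, b(t)}$ almost surely, which is the distributional statement asserted at the end of the theorem.

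Finally I would convert this identity into the claimed survival-function relation. Because $\gamma$ is strictly increasing, the events coincide: $\{\mathscr{T}_{Y, b(t)} > t\} = \{\gamma(\mathscr{T}_{\widetilde{X}, g}) > t\} = \{\mathscr{T}_{\widetilde{X}, g} > \gamma^{-1}(t)\}$, and taking probabilities yields
$$\bar{F}_{\mathscr{T}_{Y, b(t)}}(t;y) = \bar{F}_{\mathscr{T}_{\widetilde{X}, g(t)}}\!\left( \gamma^{-1}(t); \tilde{x} \right),$$
with the initial value $\tilde{x} = \alpha_0 y - \beta_0$ furnished directly by Proposition \ref{inOUtransform}.

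The main obstacle I expect lies in the careful handling of the first-passage time under the time-change, namely establishing rigorously that the infimum of the crossing set transforms as stated. This needs $\gamma$ to be not merely increasing but a genuine homeomorphism on the relevant time interval, so that continuity of the paths is preserved and no crossing is created or destroyed by the reparametrization; here properties (a) and (b) of Definition \ref{funcs} do the essential work. I would also verify that the transformed problem is well-posed in the sense of Definition \ref{hitmove}, i.e. that the new barrier starts strictly above the new initial value: this is immediate, since $g(0) = \alpha_0 b(0) - \beta_0 > \alpha_0 y - \beta_0 = \tilde{x}$, using $b(0) > y$ and $\alpha_0 > 0$.
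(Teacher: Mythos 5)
Your proposal is correct and follows essentially the same route as the paper: it invokes Proposition \ref{inOUtransform} to identify $(\alpha(u)Y_{\gamma(u)}-\beta(u))_{u\ge0}$ with a standardized OU-process, uses positivity of $\alpha$ to transport the barrier inequality onto $g(u)=\alpha(u)b(\gamma(u))-\beta(u)$, commutes the infimum with the increasing map $\gamma$ to get $\mathscr{T}_{Y,b(t)}=\gamma(\mathscr{T}_{\widetilde{X},g(t)})$, and then changes variables in the survival function. The only additions beyond the paper's argument are your explicit well-posedness check $g(0)>\tilde{x}$ and the emphasis that $\gamma$ is a homeomorphism, both of which are harmless refinements rather than a different method.
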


\begin{proof}
	First, we show $\mathscr{T}_{Y, b(t)}$ and $\gamma\left(\mathscr{T}_{\widetilde{X}, \alpha(t)b(\gamma(t)) - \beta(t)}\right)$ are equal in distribution. We have that 
	\begin{align*}
	\mathscr{T}_{Y, b(t)} =& \inf\left\{ t>0: Y_t \geq b(t) \right\} = \inf\left\{ \gamma(t)>0: Y_{\gamma(t)} \geq b(\gamma(t)) \right\}\\
	=& \inf\left\{ \gamma(t)>0: \alpha(t)Y_{\gamma(t)} - \beta(t) \geq \alpha(t)b(\gamma(t)) - \beta(t) \right\}.
	\end{align*}
	Since $\gamma(\cdot)$ is monotone, non-decreasing and positive, we deduce
	\begin{align*}
	\mathscr{T}_{Y, b(t)} =& \gamma\left( \inf\left\{ t>0: \alpha(t)Y_{\gamma(t)} - \beta(t) \geq \alpha(t)b(\gamma(t)) - \beta(t) \right\} \right).
	\end{align*}
	By Proposition \ref{inOUtransform}, we know that the process $(\alpha(t)Y_{\gamma(t)} - \beta(t))_{t\geq 0}$ has the law of a standardised OU-process. Therefore,
	\begin{align*}
	\mathscr{T}_{Y, b(t)} = \gamma\left( \inf\left\{ t>0: \alpha(t)Y_{\gamma(t)} - \beta(t) \geq \alpha(t)b(\gamma(t)) - \beta(t) \right) \right) = \gamma\left(\mathscr{T}_{\widetilde{X}, \alpha(t)b(\gamma(t)) - \beta(t)}\right).
	\end{align*}
	Then, it follows that
	$\bar{F}_{\mathscr{T}_{Y, b(t)}}(t;x) = \mathbb{P}\left( \mathscr{T}_{Y, b(t)} >t\, | \,  Y_0 = x \right) = \mathbb{P}\left( \gamma\left(\mathscr{T}_{\widetilde{X}, \alpha(t)b(\gamma(t)) - \beta(t)}\right) >t\, | \,  \widetilde X_0 =\widetilde x \right)
	= \mathbb{P}\left( \mathscr{T}_{\widetilde{X}, \alpha(t)b(\gamma(t)) - \beta(t)} >\gamma^{-1}(t)\, | \,  \widetilde X_0 =\widetilde x \right) = \bar{F}_{\mathscr{T}_{\widetilde{X}, g(t)}}\left(\gamma^{-1}(t);\widetilde{x}\right)$.
\end{proof}

\begin{example}[\textbf{Seasonal trend}]
		One example is to apply a seasonality function to the mean-reverting level function $\mu(t)$. Here we show how we can utilize Theorem \ref{inOUtransSOU} to transform the problem of an inhomogeneous OU-process hitting a constant barrier to the one of a standardized OU-process hitting a periodic barrier. We consider the inhomogeneous OU-process $(Y_t)_{t \geq 0}$, parametrized by $\mu(t) = A \sin\left( \theta t + \varphi \right)$, $\lambda(t) = \lambda$ and $\sigma(t) = \sigma$, with initial value $Y_0 = y$, where $A$, $\theta$, $\varphi \in \mathbb{R}$ and $\lambda, \sigma > 0$. The constant barrier is denoted by $b$. The mean-reverting scaling function $\alpha(t)$ and the time-compensation function $\gamma(t)$ are given by $\alpha(t) = \sqrt{\lambda}/\sigma$ and $\gamma(t) = t/\lambda$.
		Then $\beta(t)$ satisfies 
\begin{align}\label{beta_ode}
\beta(t) = \mu\left( \frac{t}{\lambda} \right) \frac{1}{\sigma \sqrt{\lambda}}-\beta^\prime(t).
\end{align}
		The associated ODE (\ref{beta_ode}), in this particular case, has the unique solution
		\begin{align}
		\beta(t) = B \rm{e}^{-t} + \frac{A \sqrt{\lambda}}{\sigma \sqrt{\lambda^2 + \theta^2}} \sin\left( \frac{\theta}{\lambda}t + {\varphi} - \arctan\left( \frac{\theta}{\lambda} \right) \right),
		\end{align}
		where $B$ is a constant so to match the initial condition. For convenience, we let $B=0$ by imposing the initial condition
		$$\beta(0) =  \frac{A \sqrt{\lambda}}{\sigma \sqrt{\lambda^2 + \theta^2}} \sin\left( {\varphi} - \arctan\left( \frac{\theta}{\lambda} \right) \right),$$
		which, by Theorem \ref{inOUtransSOU}, means that the standardized OU-process $(\widetilde{X}_t)_{t\geq  0}$ starts from
		$$\widetilde{X}_0 = \frac{\sqrt{\lambda}}{\sigma} X_0 - \frac{A \sqrt{\lambda}}{\sigma \sqrt{\lambda^2 + \theta^2}} \sin\left( {\varphi} - \arctan\left( \frac{\theta}{\lambda} \right) \right).$$
		Then we have a particular solution for $\beta(t)$ given by
		\begin{align*}
		\beta(t) = \frac{A \sqrt{\lambda}}{\sigma \sqrt{\lambda^2 + \theta^2}} \sin\left( \frac{\theta}{\lambda}t + {\varphi} - \arctan\left( \frac{\theta}{\lambda} \right) \right).
		\end{align*}
		By Theorem \ref{inOUtransSOU}, we can now calculate the probability of a standardized OU-process, with initial value
		$$\widetilde{X}_0 = \widetilde{x} = \frac{x\sqrt{\lambda}}{\sigma} - \frac{A \sqrt{\lambda}}{\sigma \sqrt{\lambda^2 + \theta^2}} \sin\left( {\varphi} - \arctan\left( \frac{\theta}{\lambda} \right) \right),$$ 
		crossing a periodic barrier 
		$$g(t) = \frac{b\sqrt{\lambda}}{\sigma} - \frac{A \sqrt{\lambda}}{\sigma \sqrt{\lambda^2 + \theta^2}} \sin\left( \frac{\theta}{\lambda}t + {\varphi} - \arctan\left( \frac{\theta}{\lambda} \right) \right). $$	
\end{example}
\section{Multiple crossings of an inhomogeneous OU-process}\label{multi}
For an inhomogeneous OU-process with parameter functions in ${C}^1(\mathbb{R}^+)$, one can transform its barrier-crossing problem to one involving the standardized OU-process and a time-dependent barrier function. The time-dependent barrier can be approximated by a piece-wise constant function. This is due to the fact that any continuous function can be approximated with a piece-wise constant function to arbitrary accuracy given a sufficiently large number of partitions. The problem thus reduces to a multiple-crossing problem for a standardized OU-process. We call this scheme the {\it transformation method}.

Alternatively, one may directly use the piecewise constant approximation for the parameter functions of the inhomogeneous OU-process. This alternative method leads to a multiple-crossing problem for a locally-homogeneous OU-process. We call this scheme the {\it direct approximation method}.

The transformation method modifies the inhomogeneous OU-process to a global standardized OU-process by solving an ODE system given in Definition \ref{funcs}. After the transformation, it uses piece-wise constant functions to approximate the new time-varying barrier. This scheme requires further conditions to be satisfied, suchlike continuity of the parameter functions, for the transformation to be well-defined. In addition, solving the ODE system can be difficult. The second method does not rely on such a transformation. However, it results in a locally homogeneous OU-process with piece-wise constant barriers, where the time steps for the barriers and OU-parameters may not necessarily match. 
\begin{figure}[H]
	\centering
	\includegraphics[width=0.475\textwidth]{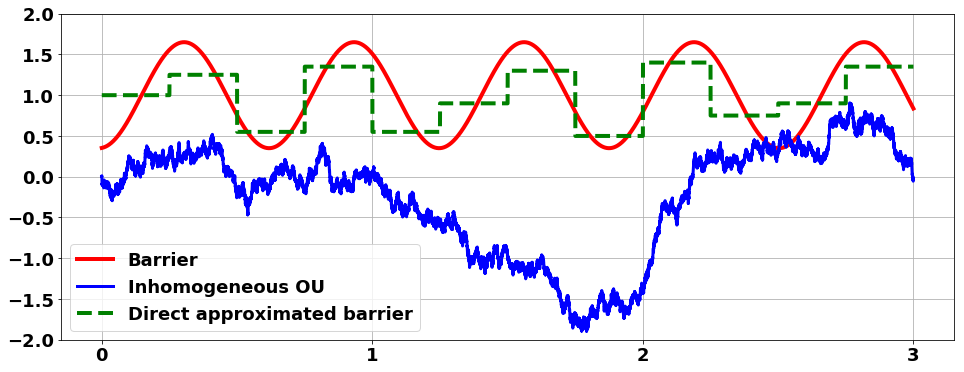}
	\includegraphics[width=0.475\textwidth]{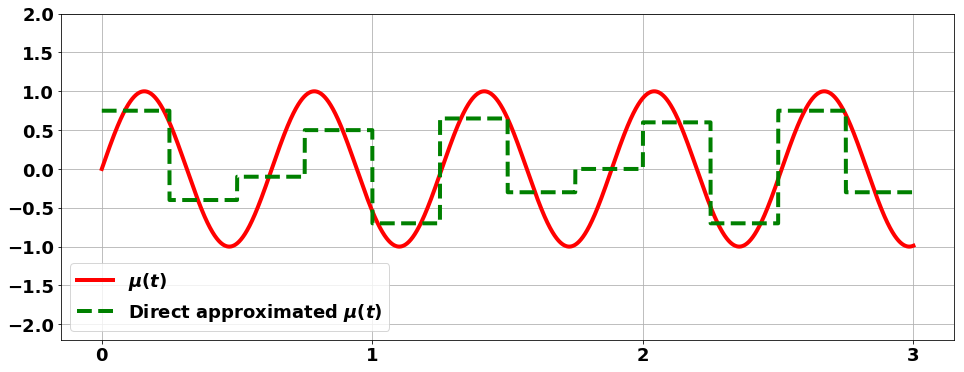}
	\includegraphics[width=0.475\textwidth]{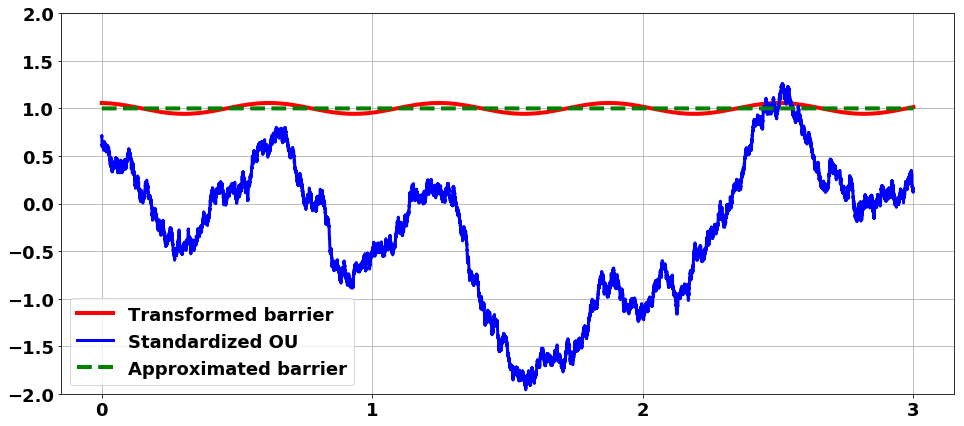}
	\caption{An example where the {\it transformation method} is advantageous and the piece-wise-constant approximation is applied. The inhomogeneous OU-process crossing the time-dependent barrier $b(t)=1+0.65\sin\left( 10t+\arctan(10) \right)$ is shown in the upper panel. The parameter functions of the OU-process are $\mu(t)=\sin(10t)$ and $\lambda=\sigma=1$ as shown in the second panel. The original problem can be transformed to a standardized OU-process with smoother time-dependent barrier, see the lower panel.}\label{OUpath1}
\end{figure}
In applications, the method one should select is decided on a case by case basis. In Figure \ref{OUpath1}, we simulate a time-inhomogeneous OU-process with a time-dependent barrier. The application of the first method can offset time-dependencies from the parameters and the barrier. The direct approximation method will lead to a higher approximation error. In order to reach the same level of accuracy, one may have to approximate using more time segments, which complicates the barrier-crossing problem. 
\begin{figure}[H]
	\centering
	\includegraphics[width=0.475\textwidth]{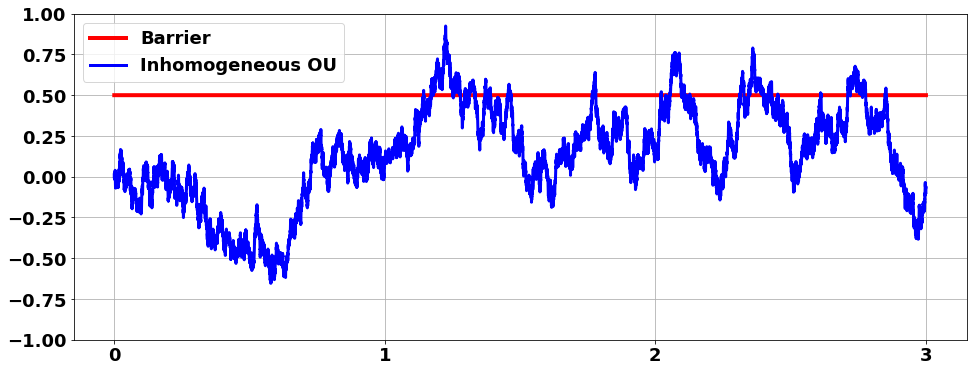}
	\includegraphics[width=0.475\textwidth]{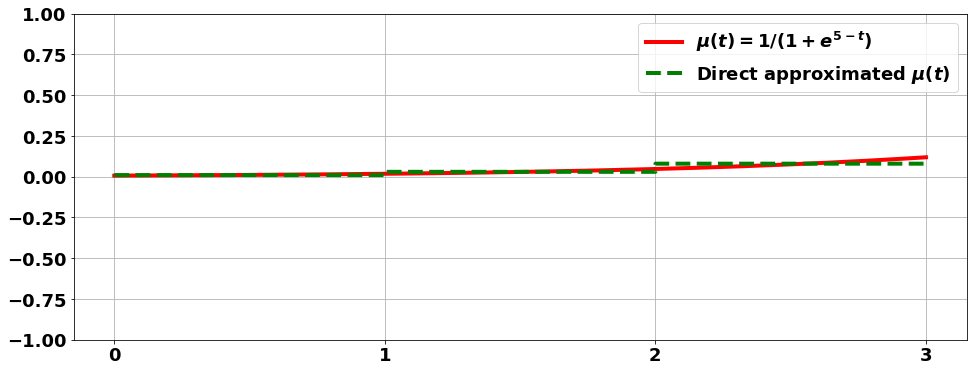}
	\includegraphics[width=0.475\textwidth]{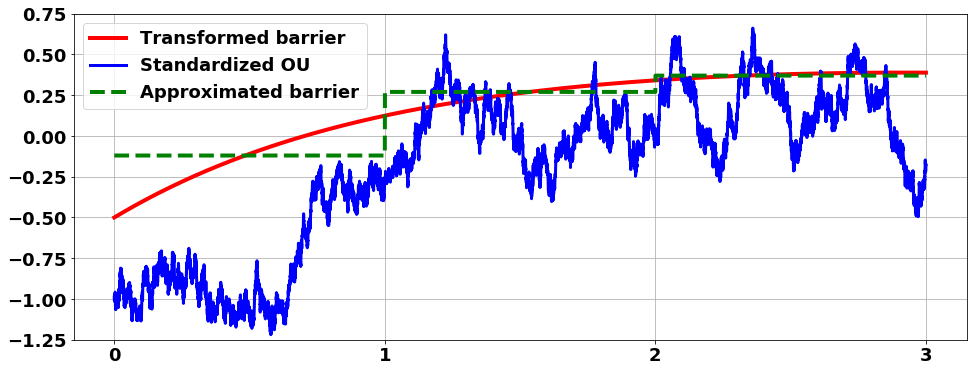}
	\caption{An example where the piecewise-constant approximation is inefficient and the {\it transformation method} is disadvantageous. The inhomogeneous OU-process crossing the constant barrier $b(t)=0.5$ is shown in the upper panel. The parameter functions of the OU-process are $\mu(t)=1/(1+\e^{5-t})$ and $\lambda=\sigma=1$ as shown in the second panel. The original problem is transformed to a standardised OU-process with a steeper time-dependent barrier, see the lower panel. }\label{OUpath2}
\end{figure}
However, this does not mean that the first method is always better than the direct approximation approach. For example in Figure \ref{OUpath2}, with the same number of discretizations, the transformation method leads to a higher approximation error.

In general, if any of the three OU parameter functions is not in ${C}^1(\mathbb{R}^+)$, one should use the direct approximation method. When applying the transformation method, the transformed barrier function $g(t) = \alpha(t)b(\gamma(t)) - \beta(t)$ can be written as
$$g(t) = \e^{-t}\left[ \alpha_0 \e^{\int_{0}^{\gamma(t)} \lambda(s) \rd s} b(\gamma(t)) - \beta_0 - \alpha_0 \beta_0 \int_{0}^{\gamma(t)} \e^{\int_{0}^{s} \lambda(s) \rd s} \mu(s) \rd s \right],$$
where $\gamma(t)$ is obtained from the equation
$$\alpha_0 \e^{\int_{0}^{\gamma(t)} \lambda(s) \rd s - t} \sigma(\gamma(t))\sqrt{\gamma^\prime(t)} = 1.$$
Although it is difficult to devise a general principle to select the method to be adopted, we can provide a rule for some special cases as, e.g., in the next proposition.
\begin{proposition}\label{convexity}
	Assume that the inhomogeneous OU-process in Definition \ref{inOUdef} has coefficients $\sigma(t) = \sigma$, $\lambda(t) = \lambda$, $\mu(t) \in {C}^2\left([t_1, t_2]\right)$, and consider a constant barrier $b(t) = b$. The transformed barrier function is denoted by $g(t) = \alpha(t) b(\gamma(t))- \beta(t)$, where $\alpha(t), \beta(t)$ and $\gamma(t)$ are defined in Definition \ref{funcs}. If $\text{sgn}(g^\prime(\lambda t))=\text{sgn}(\mu^\prime(t))$, $\text{sgn}(g^{\prime\prime}(\lambda t))=\text{sgn}(\mu^{\prime\prime}(t))$ and further
	$\mu(t)$ satisfies 
	$$ \frac{\inf\limits_{t \in [\lambda t_1, \lambda t_2]} {\beta_0} \left| \left[ \mu(\frac{t}{\lambda})    -  \sigma\sqrt{\lambda} \e^{-t} - \int_{0}^{t} \e^{s-t} \mu\left( \frac{s}{\lambda} \right) \rd s   - \mu^\prime(\frac{t}{\lambda}) \frac{1}{\lambda}  \right] \right| }{\inf\limits_{t \in [t_1, t_2]}  \left| \mu^{\prime\prime}(t) \right|} \lessgtr \sigma\sqrt{\lambda},
	$$
	for all $t \in [t_1, t_2]$, then $g'(t)\lessgtr\mu'(t)$, and it is efficient to use the transformation method (for $<$), respectively, the direct approximation method (for $>$).
\end{proposition}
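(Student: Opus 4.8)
First I would specialise the ODE system \eqref{ode} to the constant-coefficient data $\sigma(t)=\sigma$, $\lambda(t)=\lambda$. The first two equations decouple from $\beta$: writing $\alpha(t)=1/(\sigma\sqrt{\gamma'(t)})$ and substituting into the second equation gives $\gamma''/\gamma'=2-2\lambda\gamma'$, which together with $\gamma(0)=0$ and the compatibility of $\alpha_0$ forces the closed forms $\gamma(t)=t/\lambda$ and $\alpha(t)\equiv\sqrt{\lambda}/\sigma$ (this is exactly the specialisation already recorded in the preceding seasonal-trend example). Since $b(t)\equiv b$ is constant, $g(t)=\alpha(t)b(\gamma(t))-\beta(t)=\tfrac{\sqrt{\lambda}}{\sigma}b-\beta(t)$, so all the $t$-dependence of the transformed barrier is carried by $\beta$. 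Feeding the specialisation into the third equation of \eqref{ode} yields the scalar linear ODE
\[
\beta'(t)+\beta(t)=\frac{1}{\sigma\sqrt{\lambda}}\,\mu\!\left(\frac{t}{\lambda}\right),
\]
whose unique solution is guaranteed by Lemma \ref{sufficient}; solving with the integrating factor $\e^{t}$ gives $\beta(t)=\beta_0\e^{-t}+\frac{1}{\sigma\sqrt{\lambda}}\int_0^t \e^{s-t}\mu(s/\lambda)\,\rd s$.

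\textbf{Derivatives and the key identity.} Because $g'=-\beta'$ and $g''=-\beta''$, I would read off $g'$ and $g''$ directly from the ODE and its derivative rather than from the integral form. From the ODE, $g'(t)=\beta(t)-\frac{1}{\sigma\sqrt{\lambda}}\mu(t/\lambda)$, and differentiating the ODE (which is legitimate since $\mu\in C^2$) gives $\beta''+\beta'=\frac{1}{\sigma\lambda^{3/2}}\mu'(t/\lambda)$, hence the clean relation
\[
g''(t)+g'(t)=-\frac{1}{\sigma\lambda^{3/2}}\,\mu'\!\left(\frac{t}{\lambda}\right).
\]
Eliminating $\beta$ with the explicit solution produces the identity I actually need,
\[
\sigma\sqrt{\lambda}\,g''(t)=\mu\!\left(\frac{t}{\lambda}\right)-\sigma\sqrt{\lambda}\,\beta_0\,\e^{-t}-\int_0^t \e^{s-t}\mu\!\left(\frac{s}{\lambda}\right)\rd s-\frac{1}{\lambda}\,\mu'\!\left(\frac{t}{\lambda}\right),
\]
whose right-hand side, evaluated over $t\in[\lambda t_1,\lambda t_2]$, is precisely the bracketed quantity appearing in the numerator of the stated inequality. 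Thus the hypothesis is nothing other than a restatement of $\inf_{[\lambda t_1,\lambda t_2]}|g''|\lessgtr\inf_{[t_1,t_2]}|\mu''|$ after cancelling the common factor $\sigma\sqrt{\lambda}$.

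\textbf{Comparison argument.} With the two identities in hand, the conclusion $g'(t)\lessgtr\mu'(t)$ becomes a comparison of slopes. The plan is to use the sign hypotheses $\operatorname{sgn}(g'(\lambda t))=\operatorname{sgn}(\mu'(t))$ and $\operatorname{sgn}(g''(\lambda t))=\operatorname{sgn}(\mu''(t))$ to guarantee that, on the matched intervals, $g'(\lambda\,\cdot)$ and $\mu'(\cdot)$ are each monotone and share a sign, so their magnitudes may be compared through the magnitudes of their rates of change. One then integrates the second-derivative domination $\inf|g''|\lessgtr\inf|\mu''|$ from the common endpoint dictated by the matched signs, using the relation $g''+g'=-\tfrac{1}{\sigma\lambda^{3/2}}\mu'(\cdot/\lambda)$ to keep the boundary data consistent, and deduces $|g'(\lambda t)|\lessgtr|\mu'(t)|$ throughout. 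Finally I would interpret the result: a uniformly smaller slope for $g$ means that fewer piecewise-constant segments achieve a prescribed accuracy for the transformed barrier, so the transformation method is the efficient choice in the ``$<$'' case, and the direct approximation method in the ``$>$'' case.

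\textbf{Main obstacle.} The delicate step is the last one: passing rigorously from a comparison of the \emph{infima of second derivatives} to a pointwise comparison of \emph{first derivatives}. The coupling $g'+g''=-\tfrac{1}{\sigma\lambda^{3/2}}\mu'(\cdot/\lambda)$ means $g'$ and $g''$ cannot be controlled independently, so the sign hypotheses are not cosmetic — they interlock with this relation to fix which endpoint anchors the integration and to rule out a reversal of the inequality inside the interval. Verifying that this bookkeeping is consistent in all admissible sign combinations, and that the time-change $\gamma(t)=t/\lambda$ has been applied so that the arguments $\lambda t$ and $t$ are matched correctly on both sides, is where the real care is required; the remaining computations are routine.
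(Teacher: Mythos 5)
Your computation follows the paper's own route almost exactly: specialise the ODE system to constant $\sigma,\lambda$ to get $\gamma(t)=t/\lambda$ and $\alpha(t)=\sqrt{\lambda}/\sigma$, observe that $g(t)=\tfrac{\sqrt{\lambda}}{\sigma}b-\beta(t)$ so that all the information sits in $\beta$, and then express $g''$ through $\mu$ so that the hypothesis of the proposition is recognised as $\inf|g''|\lessgtr\inf|\mu''|$ after cancelling $\sigma\sqrt{\lambda}$. The paper does precisely this (working with $\inf g''/\inf\mu''$ directly in the all-positive case and asserting the other sign case is symmetric), so in substance you and the authors take the same path. One small discrepancy worth flagging: your $\beta(t)=\beta_0\e^{-t}+\tfrac{1}{\sigma\sqrt{\lambda}}\int_0^t\e^{s-t}\mu(s/\lambda)\,\rd s$ is the literal solution of the third equation of the ODE system, whereas the paper writes $\beta(t)=\beta_0\e^{-t}\bigl[1+\tfrac{1}{\sigma\sqrt{\lambda}}\int_0^t\e^{s}\mu(s/\lambda)\,\rd s\bigr]$, with $\beta_0$ multiplying the forced part as well; it is only with the paper's form that the numerator of the stated inequality acquires $\beta_0$ as an overall factor. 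Your identity therefore reproduces the proposition's bracket only up to the placement of $\beta_0$ (they coincide when $\beta_0=1$). This traces back to an internal inconsistency in the paper rather than to an error of yours, but as written your identity does not literally match the displayed hypothesis.

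The genuine gap is the one you yourself label the ``main obstacle'': you never actually derive $g'(t)\lessgtr\mu'(t)$ from $\inf|g''|\lessgtr\inf|\mu''|$. The integration plan you sketch cannot work as stated — a bound on the infima of second derivatives gives no pointwise control on first derivatives without an anchor point where $g'(\lambda\,\cdot)$ and $\mu'$ are known to agree, and neither the sign hypotheses nor the coupling $g''+g'=-\tfrac{1}{\sigma\lambda^{3/2}}\mu'(\cdot/\lambda)$ supplies one (consider $g$ with a large constant slope and tiny curvature against $\mu$ with small slope and larger curvature). You should know, however, that the paper's own proof is no better here: it computes the ratio of infima and then simply asserts ``Hence $g'(t)<\mu'(t)$'' from monotonicity and convexity, with no further argument. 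So your attempt matches the published proof in every step that the paper actually carries out; the step you honestly leave open is exactly the step the paper glosses over.
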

\begin{proof}
	See Appendix \ref{AppC}.
\end{proof}
\begin{example}
	Consider the inhomogeneous OU-process in Definition \ref{inOUdef} with $\lambda(t) = \sigma(t) = 1$ and $\mu(t) = -\e^{-t}$ crossing the constant barrier $b(t) = 1$ in the interval $[1,3/2]$. By the transformation method, we have $\alpha(t) = 1$, $\gamma(t) = t$ and $\beta(t) = \e^{-t}(1-t)$. Therefore for $t\in[1,3/2]$,
	\begin{align*}
	&\mu(t) = -\e^{-t},& &g(t) = 1 + \text{\e}^{-t}(t-1),&\\
	&\mu^\prime(t) = \text{\e}^{-t}>0,& &g^\prime(t) = \text{\e}^{-t}(2-t)>0,&\\
	&\mu^{\prime\prime}(t) = -\text{\e}^{-t}<0,& &g^{\prime\prime}(t)=\e^{-t}(t-3)<0.&
	\end{align*}
	This shows that both $\mu(t)$ and $g(t)$ are concave monotone increasing functions in the given domain. We have 
	\begin{align*}
	\inf\limits_{t \in [ 1,3/2]} | g^{\prime\prime}(t) | &= \inf\limits_{t \in [ 1,3/2]} |\e^{-t}(t-3)| = \frac{3}{2}\text{\e}^{-\frac{3}{2}},\\ 
	\inf\limits_{t \in [ 1,3/2]}  | \mu^{\prime\prime}(t)| &= \inf\limits_{t \in [ 1,3/2]} | \text{\e}^{-t} | = \e^{-\frac{3}{2}}.
	\end{align*}
	Since
	$$\frac{\inf\limits_{t \in [ 1,3/2]} | g^{\prime\prime}(t) |}{\inf\limits_{t \in [ 1,3/2]} | \mu^{\prime\prime}(t) |}>1,$$
	it is more efficient to use the direct approximation method. 
\end{example}
Next we investigate the case where $\mu(t)$, $\lambda(t)$ and $\sigma(t)$, the parameter functions of the inhomogeneous OU-process $(Y_t)_{t \geq 0}$, and the barrier function $b(t)$ are c\`adl\`ag piece-wise constant functions. Let the parameter functions be specified by 
\begin{align*}
&\mu(t) = \sum_{i = 1}^{N^{(\mu)}} \mu_i \mathbb{1}\left({t \in [t^{(\mu)}_{i-1}, t^{(\mu)}_i)}\right),&
&\lambda(t) = \sum_{i = 1}^{N^{(\lambda)}} \lambda_i \mathbb{1}\left({t \in [t^{(\lambda)}_{i-1}, t^{(\lambda)}_i)}\right),&\\
&\sigma(t) = \sum_{i = 1}^{N^{(\sigma)}} \sigma_i \mathbb{1}\left({t \in [t^{(\sigma)}_{i-1} t^{(\sigma)}_i)}\right),&
&b(t) = \sum_{i = 1}^{N} b_i \mathbb{1}\left({t \in [t_{i-1}, t_i)}\right),&
\end{align*}
for all $i=1, 2, ..., N$,	where $\lambda_i, \sigma_i \in \mathbb{R}^+$, $\mu_i, b_i \in \mathbb{R}$ and $b_0 > Y_0$. Here, we consider a finite-time horizon where $t^{(\mu)}_{N^{(\mu)}} = t^{(\lambda)}_{N^{(\lambda)}} = t^{(\sigma)}_{N^{(\sigma)}} = t_{N}$, and $\mathbb{1}\left(\cdot\right)$ denotes the indicator function. We study the following probabilities:
\begin{align}
&\mathbb{P}\left( M_{t_{0}, t_1}^{Y}<b_1, M_{t_{1}, t_2}^{Y}<b_2,\ldots, M_{t_{N-1}, t_N}^{Y}<b_N \right), \label{joint_dist}\\
&\mathbb{P}\left( M_{t_{0}, t_1}^{Y}\geq b_1, M_{t_{1}, t_2}^{Y}\geq b_2,\ldots, M_{t_{N-1}, t_N}^{Y}\geq b_N \right),\label{joint_survival}
\end{align}
where 
$$M_{t_{i-1}, t_i}^{Y} = \sup\limits_{t \in ( t_{i-1}, t_i ]} Y_t.$$
In particular, the probability (\ref{joint_dist}) is equal to the probability that the FPT of this inhomogeneous OU-process is larger than $t_N$. Expression (\ref{joint_survival}) is the probability that the inhomogeneous OU-process crosses the barrier in each interval, i.e. the multiple crossing (joint) probability. We may consider the following discretization schemes:
\begin{itemize}
	\item[1)]
	Matching time-discretization for $\mu(t)$, $\lambda(t)$, $\sigma(t)$ and $b(t)$, i.e. $t^{(\mu)}_i = t^{(\lambda)}_i = t^{(\sigma)}_i = t_i$ for all $i = 0, 1, \ldots, N$.
	\item[2)]
	Matching time-discretization for $\mu(t)$, $\lambda(t)$ and $\sigma(t)$ only, i.e. $t^{(\mu)}_i = t^{(\lambda)}_i = t^{(\sigma)}_i$ for all $i = 0, 1, \ldots, N^{(\mu)}$.
	\item[3)]
	Non-matching time-discretizations for any of $\mu(t)$, $\lambda(t)$, $\sigma(t)$ and $b(t)$. 
\end{itemize}
One can show that the probabilities (\ref{joint_dist}) and (\ref{joint_survival}) in the last two cases can be further reduced to the first case by utilizing Theorem \ref{main_thm}. Therefore, in what follows, we will focus on the first case unless specified otherwise, and we shall show the reduction methods from the case 2) and 3) to case 1) in Section \ref{nonmatching}.
\subsection{Joint distribution and multivariate survival functions for multiple maxima of a continuous Markov process in consecutive intervals}\label{inhomoFPT}

We begin with a theorem for a continuous Markov process. We recall the definition of a Markov process, see for instance \cite{bingham}. In this section, we introduce the filtered probability space $(\Omega, \mathscr{F},(\mathscr{F}_t), \mathbb{P})$ and an adapted Markov process $(Z_t)_{t\ge 0}$. We write $\sigma(Z_s:s\le t)$ for the natural filtration of the Markov process $(Z_t)$, where $\sigma(Z_s:s\le t)\subseteq\F_t$.
\begin{lemma}\label{markov_ind}
If $(Z_t)_{t\ge0}$ is a Markov process, then $\mathbb{P}(A\, \cap\, B\,\vert\, Z_t ) = \mathbb{P}(A \,\vert\, Z_t )\mathbb{P}(B \,\vert\, Z_t )$ for all $A \in \sigma (Z_u: u \ge t)$ and $B \in \sigma (Z_u: u \le t)$.  It follows that $\mathbb{P}(A \,\vert\, B, Z_t ) = \mathbb{P}(A \,\vert\, Z_t )$.
\end{lemma}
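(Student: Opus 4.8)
The plan is to reduce the claimed factorisation to the defining (conditional-expectation) form of the Markov property and then to pull out measurable factors using the tower property. Writing $\mathbb{P}(\,\cdot\mid Z_t)=\mathbb{E}[\,\cdot\mid\sigma(Z_t)]$ throughout, the key observation is that, since $(Z_t)$ is Markov and adapted to $(\mathscr{F}_t)$, for any future event $A\in\sigma(Z_u:u\ge t)$ one has $\mathbb{E}[\mathbf{1}_A\mid\mathscr{F}_t]=\mathbb{E}[\mathbf{1}_A\mid Z_t]=\mathbb{P}(A\mid Z_t)$, the right-hand side being $\sigma(Z_t)$-measurable. Moreover, any past event $B\in\sigma(Z_u:u\le t)$ satisfies $B\in\mathscr{F}_t$, so $\mathbf{1}_B$ is $\mathscr{F}_t$-measurable.

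First I would condition on $\mathscr{F}_t$ inside the conditional expectation given $Z_t$, which is legitimate because $\sigma(Z_t)\subseteq\mathscr{F}_t$. By the tower property and the $\mathscr{F}_t$-measurability of $\mathbf{1}_B$,
\[
\mathbb{P}(A\cap B\mid Z_t)=\mathbb{E}\big[\mathbf{1}_B\,\mathbb{E}[\mathbf{1}_A\mid\mathscr{F}_t]\,\big|\,Z_t\big].
\]
Applying the Markov property replaces $\mathbb{E}[\mathbf{1}_A\mid\mathscr{F}_t]$ by the $\sigma(Z_t)$-measurable random variable $\mathbb{P}(A\mid Z_t)$, which may then be taken out of the outer conditional expectation. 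This yields $\mathbb{P}(A\cap B\mid Z_t)=\mathbb{P}(A\mid Z_t)\,\mathbb{E}[\mathbf{1}_B\mid Z_t]=\mathbb{P}(A\mid Z_t)\,\mathbb{P}(B\mid Z_t)$, which is the asserted factorisation.

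For the second statement I would verify that $\mathbb{P}(A\mid Z_t)$ is a version of $\mathbb{E}[\mathbf{1}_A\mid\sigma(B)\vee\sigma(Z_t)]$. It is certainly $\sigma(B)\vee\sigma(Z_t)$-measurable, so it remains to check that $\mathbb{E}[\mathbf{1}_A\mathbf{1}_G]=\mathbb{E}[\mathbb{P}(A\mid Z_t)\,\mathbf{1}_G]$ for every $G$ in a $\pi$-system generating $\sigma(B)\vee\sigma(Z_t)$. Since $\sigma(B)=\{\emptyset,B,B^c,\Omega\}$ is finite, it suffices to take $G=B\cap D$ (and $G=B^c\cap D$) with $D\in\sigma(Z_t)$; for such $G$ both sides reduce, via the tower property and the factorisation just proved, to $\mathbb{E}[\mathbb{P}(A\mid Z_t)\,\mathbb{P}(B\mid Z_t)\,\mathbf{1}_D]$. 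A routine Dynkin $\pi$-$\lambda$ argument then extends the identity to all of $\sigma(B)\vee\sigma(Z_t)$, giving $\mathbb{P}(A\mid B,Z_t)=\mathbb{P}(A\mid Z_t)$.

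The main obstacle is conceptual rather than computational: one must invoke the Markov property in its conditional-expectation form $\mathbb{E}[\mathbf{1}_A\mid\mathscr{F}_t]=\mathbb{E}[\mathbf{1}_A\mid Z_t]$ (valid because $\sigma(Z_s:s\le t)\subseteq\mathscr{F}_t$ and $A$ is a future event) and keep careful track of which factor is $\sigma(Z_t)$-measurable at each application of the tower property. The extension to $\sigma(B)\vee\sigma(Z_t)$ in the second part is the only place where a generating-class argument is needed, and it is straightforward because $\sigma(B)$ is finite.
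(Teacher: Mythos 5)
Your proof is correct. Note, however, that the paper does not actually prove this lemma at all---its ``proof'' consists of the single sentence that the result is straightforward and can be found in the cited textbook of Bingham and Kiesel---so any complete argument you supply is necessarily doing more than the paper does. What you give is the standard derivation: condition on $\mathscr{F}_t$ inside the conditional expectation given $Z_t$, pull out the $\mathscr{F}_t$-measurable factor $\mathbf{1}_B$, invoke the Markov property to replace $\mathbb{E}[\mathbf{1}_A\mid\mathscr{F}_t]$ by the $\sigma(Z_t)$-measurable $\mathbb{P}(A\mid Z_t)$, and extract it; the second assertion then follows by checking the defining property of $\mathbb{E}[\mathbf{1}_A\mid\sigma(B)\vee\sigma(Z_t)]$ on the $\pi$-system $\{E\cap D: E\in\sigma(B),\, D\in\sigma(Z_t)\}$, which is exactly what one would find in a textbook treatment. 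The only step you pass over lightly is that the Markov property in the form $\mathbb{E}[\mathbf{1}_A\mid\mathscr{F}_t]=\mathbb{E}[\mathbf{1}_A\mid Z_t]$ for an arbitrary $A\in\sigma(Z_u:u\ge t)$ (rather than $A$ depending on a single future time) itself requires a monotone-class extension of the elementary Markov property; this is routine and many authors take it as the definition, so it is not a gap, but it is worth stating explicitly if the lemma is to be genuinely self-contained. Your observation that $\sigma(B)$ is finite, so that the generating-class argument in the second part is essentially trivial, is a nice economy.
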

\begin{proof}
	This is straightforward and shown, e.g., in \cite{bingham}.
\end{proof}

We prove Theorem \ref{main_thm} below by use of the conditional independence property. We consider the time steps $0 = t_0 < t_1 < \cdots < t_N = T$, and denote the barrier level in the interval $[t_{i-1}, t_i)$ by $b_i$. In Theorem \ref{main_thm}, we calculate the joint distribution function, or the survival function, of the maximum of a continuous Markov process in each interval. Here, we write $M_{t_{i-1}, t_i}:=\sup\limits_{t\in(t_{i-1}, t_i]} Z_t$.

\begin{theorem}\label{main_thm}
	Let $b_1, \ldots, b_N\in \mathscr{D} := \text{Dom}(Z_t)$. The joint distribution and survival functions of the maxima of a continuous Markov process $(Z_t)_{t \geq 0}$ in consecutive intervals are given, respectively, by
	\begin{align*}
	&\mathbb{P}\left( M_{t_{0},t_1} < b_1, \cdots, M_{t_{N-1},t_N} < b_N\, | \,  Z_0 = z_0 \right) \\
	&=\int_{\mathscr{D}} \psi_1\left(t_{0}, t_1,  z_{0},  z_{1}, b_1 \right) \cdots \int_{\mathscr{D}} \psi_{N-1}\left( t_{N-2}, t_{N-1},  z_{N-2},  z_{N-1}, b_{N-1} \right)q\left( t_{N-1}, t_N, Z_{N-1} \right) \rd z_{N-1} \cdots \rd z_1,\\\\
	&\mathbb{P}\left( M_{t_{0},t_1} \geq b_1, \cdots, M_{t_{N-1},t_N} \geq b_N\, | \,  Z_0 = z_0 \right) \\
	&= \int_{\mathscr{D}} \kappa_1\left(t_{0}, t_1,  z_{0},  z_{1}, b_1 \right) \cdots \int_{\mathscr{D}} \kappa_{N-1}\left( t_{N-2}, t_{N-1},  z_{N-2},  z_{N-1}, b_{N-1} \right) 
	\bar{q}\left( t_{N-1}, t_N, Z_{N-1} \right) \rd z_{N-1} \cdots \rd z_1,
	\end{align*}
	where 
	\begin{align*}
		\psi_{i}(t_{i-1}, t_i, z_{i-1}, z_i, b_i)&=\mathbb{P}\left( M_{t_{i-1},t_i} < b_i\, | \,  Z_{t_{i-1}} = z_{i-1}, Z_{t_i}=z_i \right) p(t_{i-1}, t_i,z_{i-1},z_i),\\
		\kappa_{i}(t_{i-1}, t_i, z_{i-1}, z_i, b_i)&=\mathbb{P}\left( M_{t_{i-1},t_i} \geq b_i\, | \,  Z_{t_{i-1}} = z_{i-1}, Z_{t_i}=z_i \right) p(t_{i-1}, t_i, z_{i-1}, z_i),\\
		q\left( t_{N-1}, t_N, Z_{N-1} \right)&=\mathbb{P}\left( M_{t_{N-1},t_N} < b_N\, | \,  Z_{t_{N-1}} = z_{N-1} \right),\\
		\bar{q}\left( t_{N-1}, t_N, Z_{N-1} \right) &= 1 - {q}\left( t_{N-1}, t_N, Z_{N-1} \right).
	\end{align*}
	Here $p(t_{i-1}, t_i,z_{i-1},z_i)$, for $i=1,\ldots, N-1$, is the transition density function of the process $(Z_t)_{t \geq 0}$ from state $z_{i-1}$ at time $t_{i-1}$ to state $z_{i}$ at time $t_{i}$. 
\end{theorem}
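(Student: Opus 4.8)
The plan is to prove the distribution formula by induction on the number of intervals $N$, peeling off the first interval at each stage; the survival formula then follows from the identical argument with the events and the terminal factor replaced by their complements. Throughout I write $A_i = \{ M_{t_{i-1},t_i} < b_i \}$. Because $(Z_t)$ has continuous paths, each running maximum $M_{t_{i-1},t_i}=\sup_{t\in(t_{i-1},t_i]} Z_t$ is attained along a countable dense subset of $(t_{i-1},t_i]$, so $A_i \in \sigma(Z_u : t_{i-1}\le u \le t_i)$. In particular $A_1 \in \sigma(Z_u : u\le t_1)$ while $A_2\cap\cdots\cap A_N \in \sigma(Z_u : u \ge t_1)$ (since $t_{j-1}\ge t_1$ for $j\ge2$), which is precisely the past/future decomposition at the split time $t=t_1$ needed to invoke Lemma \ref{markov_ind}.

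First I would condition on $Z_{t_1}$ under the law started from $Z_0=z_0$. By the tower property, with transition density $p(t_0,t_1,z_0,z_1)$,
\begin{align*}
\mathbb{P}\!\left( A_1\cap\cdots\cap A_N \,\vert\, Z_0=z_0 \right) = \int_{\mathscr{D}} \mathbb{P}\!\left( A_1\cap\cdots\cap A_N \,\vert\, Z_0=z_0, Z_{t_1}=z_1 \right) p(t_0,t_1,z_0,z_1)\, \rd z_1.
\end{align*}
Applying the chain rule and then the consequence $\mathbb{P}(A\,\vert\,B,Z_t)=\mathbb{P}(A\,\vert\,Z_t)$ of Lemma \ref{markov_ind} at $t=t_1$ --- with future event $A=A_2\cap\cdots\cap A_N$ and past conditioning $B=A_1\cap\{Z_0=z_0\}\in\sigma(Z_u:u\le t_1)$ --- the integrand factorises as
\begin{align*}
\mathbb{P}\!\left( A_1 \,\vert\, Z_0=z_0, Z_{t_1}=z_1 \right)\, \mathbb{P}\!\left( A_2\cap\cdots\cap A_N \,\vert\, Z_{t_1}=z_1 \right),
\end{align*}
the second factor having shed its dependence on the past by the Markov property.

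The first factor multiplied by $p(t_0,t_1,z_0,z_1)$ is exactly $\psi_1(t_0,t_1,z_0,z_1,b_1)$. The second factor is a joint-maximum probability of the same form over the $N-1$ intervals $(t_1,t_2],\dots,(t_{N-1},t_N]$ for the process restarted at $(t_1,z_1)$, so the induction hypothesis yields $\int_{\mathscr{D}}\psi_2\cdots\int_{\mathscr{D}}\psi_{N-1}\, q\,\rd z_{N-1}\cdots \rd z_2$; substituting this under the outer $z_1$-integral produces the claimed nested expression. The base case $N=1$ is the identity $\mathbb{P}(A_1\,\vert\,Z_0=z_0)=q(t_0,t_1,z_0)$, true by definition of $q$; note that in the final interval one does \emph{not} integrate out $Z_{t_N}$, so no transition density appears there and the innermost factor is $q$ rather than a $\psi$. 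For the survival function one replaces $A_i$ by $\{M_{t_{i-1},t_i}\ge b_i\}$, $\psi_i$ by $\kappa_i$, and $q$ by $\bar q = 1-q$, with every step above unchanged.

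The main obstacle is the careful bookkeeping of the conditioning: at each split one must verify that the tail event lies in the future $\sigma$-algebra $\sigma(Z_u:u\ge t_i)$ and that conditioning on the entire past collapses, via Lemma \ref{markov_ind}, to conditioning on the single endpoint $Z_{t_i}$. The measurability of the running-maximum events, which rests on path continuity, underpins this past/future split and should be stated explicitly before the induction is run.
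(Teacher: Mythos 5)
Your proposal is correct and follows essentially the same route as the paper's own proof: induction on $N$, conditioning on $Z_{t_1}$, splitting $\{M_{t_0,t_1}<b_1\}$ from the remaining events via the conditional-independence Lemma \ref{markov_ind} and the Markov property, and then applying the induction hypothesis to the restarted process on the last $N-1$ intervals. The only (harmless) differences are that you take $N=1$ rather than $N=2$ as the base case and that you make explicit the measurability of the running-maximum events, which the paper leaves implicit.
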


\begin{proof}
	We show the proof for the joint distribution function. The proof for the joint survival function is similar, since we also utilize the Markov conditional independence property. We proceed with a proof by induction. 
	\begin{itemize}
		\item[(1)] Case $N = 2$: We know that
		\begin{align*}
		&\mathbb{P}\left( M_{t_{0},t_1} < b_1, M_{t_{1},t_2} < b_2\, | \,  Z_0 = z_0 \right) \\
		&\hspace{0cm}= \int_{\mathscr{D}} \mathbb{P}\left( M_{t_{0},t_1} < b_1, M_{t_{1},t_2} < b_2\, | \,  Z_{t_1} =  z_1, Z_0 = z_0  \right) \mathbb{P}_{z_0}\left( Z_{t_1} \in \rd z_1\, | \,  Z_0 = z_0 \right).
		\end{align*}
		Since 
		$\{ M_{t_{0},t_1} < b_1\} \in \sigma( Z_s: s\leq t_1)$
		and
		$\{ M_{t_{1},t_2} < b_2\} \in \sigma( Z_s: s\geq t_1)$,
		we have
		\begin{align*}
		&\mathbb{P}\left( M_{t_{0},t_1} < b_1, M_{t_{1},t_2} < b_2\, | \,  Z_0 = z_0 \right) \\
		&= \int_{\mathscr{D}} \mathbb{P}\left( M_{t_{0},t_1} < b_1\, | \,  Z_{t_1} = z_1, Z_0 = z_0 \right) \mathbb{P}\left( M_{t_{1},t_2} < b_2\, | \,  Z_{t_1} = z_1, Z_0 = z_0 \right)\mathbb{P}\left( Z_{t_1} \in \rd z_1\, | \,  Z_0 = z_0 \right),
		\end{align*}
		by Lemma \ref{markov_ind}. Then by the Markov property, we obtain
		\begin{align*}
		&\mathbb{P}\left( M_{t_{0},t_1} < b_1, M_{t_{1},t_2} < b_2\, | \,  Z_0 = z_0 \right)=\int_{\mathscr{D}} \psi_1\left(t_{0}, t_1,  z_{0},  z_{1}, b_1 \right) q\left( t_{1}, t_2, Z_{1} \right) \rd z_1,
		\end{align*}
		which is the case $N=2$ in Theorem \ref{main_thm}. 
		
		\item[(2)]Consider Theorem \ref{main_thm} for $N=K$ such that 
		\begin{align*}
		&\mathbb{P}\left( M_{t_{0},t_1} < b_1, M_{t_{1},t_2} < b_2, \cdots, M_{t_{K-1},t_K} < b_K\, | \,  Z_0 = z_0 \right) \\
		&=\int_{\mathscr{D}} \psi_1\left(t_{0}, t_1,  z_{0},  z_{1}, b_1 \right) \cdots \int_{\mathscr{D}} \psi_{K-1}\left( t_{K-2}, t_{K-1},  z_{K-2},  z_{K-1}, b_{K-1} \right)q\left( t_{K-1}, t_K, Z_{K-1} \right) \rd z_{K-1} \cdots \rd z_1.
		\end{align*}
		Now consider the case when $N = K+1$. We have 
		\begin{align*}
		&\mathbb{P}\left( M_{t_{0},t_1} < b_1, M_{t_{1},t_2} < b_2, \cdots, M_{t_{K-1},t_K} < b_K, M_{t_{K},t_{K+1}} < b_{K+1}\, | \,  Z_0 = z_0 \right) \\
		&= \int_{\mathscr{D}} \mathbb{P}\left( M_{t_{0},t_1} < b_1, M_{t_{1},t_2} < b_2, \cdots, M_{t_{K},t_{K+1}} < b_{K+1}\, | \,  Z_{t_1} = z_1, Z_0 = z_0 \right)\mathbb{P}_{z_0}\left( Z_{t_1} \in \rd  z_1\, | \,  Z_0 = z_0 \right).
		\end{align*}
		Then, by Lemma \ref{main_thm}, since 
		$\{ M_{t_{0},t_1} < b_1\} \in \sigma( Z_s: s\leq t_1)$
		and 
		$\{ M_{t_{1},t_2} < b_2, \cdots, M_{t_{K},t_{K+1}} < b_{K+1} \} \in \sigma( Z_s: s\geq t_1)$,
		we have 
		\begin{align*}
		&\mathbb{P}\left( M_{t_{0},t_1} < b_1, M_{t_{1},t_2} < b_2, \cdots, M_{t_{K},t_{K+1}} < b_{K+1}\, | \,  Z_{t_1} = z_1, Z_0 = z_0 \right)\\ 
		&= \mathbb{P}\left( M_{t_{0},t_1} < b_1\, | \,  Z_{t_1} = z_1, Z_0 = z_0 \right)\mathbb{P}\left( M_{t_{1},t_2} < b_2,  \cdots, M_{t_{K},t_{K+1}} < b_{K+1}\, | \,  Z_{t_1} =  z_1, Z_0 = z_0 \right).
		\end{align*}
		For $N=K$, by the Markov property, we have
		\begin{align*}
		&\mathbb{P}\left( M_{t_{1},t_2} < b_2, M_{t_{2},t_3} < b_3, \cdots, M_{t_{K},t_{K+1}} < b_{K+1}\, | \,  Z_{t_1} = z_1 \right)\\
		&\hspace{1.5cm}=\int_{\mathscr{D}} \psi_1\left(t_{1}, t_2,  z_{1},  z_{2}, b_2 \right) \cdots \int_{\mathscr{D}} \psi_K\left( t_{K-1}, t_{K},  z_{K1},  z_{K}, b_{K} \right) q\left( t_{K}, t_{K+1}, Z_{K} \right) \rd z_{K} \cdots \rd z_2.
		\end{align*}
		By iterated substitutions, the proof is complete for the case $N=K+1$. 
	\end{itemize}	
\end{proof}

Now we decompose the joint distribution and survival functions of the maxima of a continuous Markov process in consecutive intervals into three components: 
\begin{itemize}
	\item[a)] The distribution or survival function of the maximum of the continuous Markov process in a given interval conditional on its starting value and terminal value;
	\item[b)] The transition density function in a given interval;
	\item[c)] The distribution function of the maximum of the continuous Markov process in a given interval conditional on its starting value only.
\end{itemize}

The third item is equivalent to the FPT distribution for a Markov process to cross a constant barrier in a given interval. The first item involves the calculation of the maximum of a continuous Markov bridge, which involves Proposition \ref{prop_bridge}.

\begin{proposition}\label{prop_bridge}
	Let $(Z_t)_{t \geq 0} $ be a continuous Markov process where $Z_0 = z$, and $
	\tau_{Z,b} := \inf\left\{ t\geq 0 : Z_t \geq b \right\}$. Then,
	\begin{align*}
	\mathbb{P}\left( M_{0,T} \geq b\, | \,  Z_T=z^\prime, Z_0 = z \right) = 
	\left\{
	\begin{array}{ll}
	\int_{0}^{T}  \frac{p(t, T, b, z^\prime)}{p(0, T, z, z^\prime)} 
	f_{\tau_{Z,b}}(t;z) \rd t
	, &\text{ if } z,z^\prime < b,\\
	&\\
	1, & \text{ otherwise, }
	\end{array}
	\right.
	\end{align*}
	where $p(t, T, b, z^\prime)$ denotes the transition density function of $(Z_t)_{t \geq 0} $ from state $b$ at time $t$ to state $z^\prime$ at time T, and $f_{\tau_{Z,b}}(t;z)$ is the probability density function of the first-passage-time $\tau_{Z,b}$ with $Z_0=z$. 
\end{proposition}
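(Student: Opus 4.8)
The plan is to reduce the bridge-crossing probability to the unconditional first-passage density by conditioning on the first-passage time and invoking the strong Markov property. First I would dispose of the degenerate branch. If $z\geq b$, then $M_{0,T}\geq Z_0=z\geq b$ almost surely; if $z'\geq b$, then $M_{0,T}\geq Z_T=z'\geq b$ almost surely. In either case the conditional probability is identically $1$, which is the second line of the claimed formula. Hence it remains to treat $z,z'<b$.

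The next step is a path-continuity observation. Since $(Z_t)$ has continuous paths and $Z_0=z<b$, the event $\{M_{0,T}\geq b\}$ coincides with $\{\tau_{Z,b}\leq T\}$, and on this event continuity forces $Z_{\tau_{Z,b}}=b$ exactly (the process cannot overshoot the barrier). I would therefore recast the quantity of interest as $\mathbb{P}(\tau_{Z,b}\leq T\, |\, Z_0=z,\,Z_T=z')$. Conditioning on $\{Z_T=z'\}$ is conditioning on a null event, so I would work at the level of densities, or, to be rigorous, replace $\{Z_T=z'\}$ by $\{Z_T\in[z',z'+\epsilon)\}$ and pass to the limit $\epsilon\downarrow 0$.

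For the main computation I would decompose the joint law of $(\tau_{Z,b},Z_T)$ by conditioning on $\{\tau_{Z,b}\in\rd t\}$ for $t\in(0,T]$. Applying the strong Markov property at the stopping time $\tau_{Z,b}$ and using $Z_{\tau_{Z,b}}=b$, the process restarts from $b$ at time $t$, so the density of arriving at $z'$ at time $T$ equals $p(t,T,b,z')$. Thus the contribution of $\{\tau_{Z,b}\leq T,\,Z_T\in\rd z'\}$ given $Z_0=z$ is $\bigl(\int_0^T f_{\tau_{Z,b}}(t;z)\,p(t,T,b,z')\,\rd t\bigr)\rd z'$, while $\mathbb{P}(Z_T\in\rd z'\,|\,Z_0=z)=p(0,T,z,z')\,\rd z'$. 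Dividing the former by the latter (Bayes' rule for conditional densities) yields exactly the stated integral representation.

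The main obstacle will be the rigorous justification of the density manipulation and the strong Markov step: one must ensure that the regular conditional distribution of the bridge is well defined, that $\tau_{Z,b}$ admits an absolutely continuous law with density $f_{\tau_{Z,b}}$, and that the interchange of the limit $\epsilon\downarrow 0$ with the time integral is legitimate, which follows from dominated convergence together with joint continuity of the transition density $p$. For the OU application these regularity properties are supplied by the results of the earlier sections, so the argument specializes cleanly.
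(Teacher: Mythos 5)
Your proposal is correct and follows essentially the same route as the paper's proof: both handle the trivial branch by path continuity, identify $\{M_{0,T}\geq b\}$ with $\{\tau_{Z,b}\leq T\}$ and $Z_{\tau_{Z,b}}=b$, decompose over the first-passage time, use the Markov property at the hitting time to replace the post-hitting transition density by $p(t,T,b,z')$, and conclude by Bayes' rule at the density level. The only cosmetic difference is that you invoke the strong Markov property at the stopping time directly, whereas the paper conditions on $\{\tau_{Z,b}=t\}$ and applies the conditional-independence Lemma at the deterministic time $t$; these are interchangeable here.
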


\begin{proof}
See Appendix \ref{AppC}.
\end{proof}

With Proposition \ref{prop_bridge} and Theorem \ref{main_thm} at hand, we are able to at least approximate the joint distribution and survival functions of the maxima of a continuous Markov process in consecutive intervals, provided that we know its transition density function and its FPT density for a constant barrier.

\subsection{Simplified calculation of survival functions}

Now we present a theorem which simplifies the calculation of the survival function in Theorem \ref{main_thm}. We can prove that if, at the end of each interval, the terminal value of the process is lower than the barrier level in the subsequent time interval, the nested integral simplifies to a product of single integrals.
\begin{theorem}\label{sim_int}
	Given $\{ Z_{t_0} < b_1, Z_{t_1} < b_2, \ldots, Z_{t_{N-1}} < b_N \}$, the joint survival function of the maxima of a continuous Markov process $\left(Z_t\right)_{t \geq 0} \in \mathbb{R}$ in consecutive left-open and right-closed time intervals is given by
	\begin{align}\label{sim_formula}
	&\mathbb{P}\left( M_{t_0, t_1} \geq b_1, \ldots, M_{t_{N-1},t_N} \geq b_N\, | \,  {Z_{t_0} = z_0 < b_1, Z_{t_1} < b_2, \ldots, Z_{t_{N-1}} < b_N} \right)\notag\\
	&\hspace{1cm}=\mathbb{P}\left( M_{t_0, t_1} \geq b_1\, | \,  Z_{t_0} = z_0 < b_1, Z_{t_1} < b_2, \ldots, Z_{t_{N-1}} < b_N \right)\notag \\ 
	&\hspace{1.5cm}\times\prod_{i=2}^{N} \Bigg[\int_{-\infty}^{b_i} \frac{\mathbb{P}\left( M_{t_{i-1},t_i} \geq b_i\, | \,  Z_{t_{i-1}} = x_i,Z_{t_0} = z_0 < b_1, Z_{t_1} < b_2, \ldots, Z_{t_{N-1}} < b_N \right)}{\mathbb{P}\left( M_{t_{i-2},t_{i-1}} \geq b_{i-1}\, | \,  Z_{t_0} = z_0 < b_1, Z_{t_1} < b_2, \ldots, Z_{t_{N-1}} < b_N  \right)}  \notag\\
	&\hspace{1.5cm}\times\mathbb{P}\left( M_{t_{i-2},t_{i-1}} \geq b_{i-1}\, | \,  Z_{t_{i-1}} = x_i, Z_{t_0} = z_0 < b_1, Z_{t_1} < b_2, \ldots, Z_{t_{N-1}} < b_N \right)\notag\\
	&\hspace{1.5cm}\times\mathbb{P}\left( Z_{t_{i-1}} \in \rd x_i\, | \,  Z_{t_0} = z_0 < b_1, Z_{t_1} < b_2, \ldots, Z_{t_{N-1}} < b_N \right)\Bigg].
	\end{align}
\end{theorem}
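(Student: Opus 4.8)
The plan is to read the right-hand side probabilistically. Writing $A_i := \{M_{t_{i-1},t_i}\geq b_i\}$ and letting $\mathcal{C} := \{Z_{t_0}=z_0<b_1, Z_{t_1}<b_2,\ldots,Z_{t_{N-1}}<b_N\}$ be the conditioning event, I would first show that the $i$-th bracketed integral is exactly the one-step conditional probability $\mathbb{P}(A_i\mid A_{i-1},\mathcal{C})$, so that the entire product equals the telescoping expression $\mathbb{P}(A_1\mid\mathcal{C})\prod_{i=2}^{N}\mathbb{P}(A_i\mid A_{i-1},\mathcal{C})$. The assertion of the theorem then reduces to the statement that, under the conditional law $\mathbb{P}(\cdot\mid\mathcal{C})$, the family $(A_i)_{i=1}^{N}$ is a Markov chain in the interval index $i$: once this is known, the elementary chain rule $\mathbb{P}(\bigcap_{i=1}^{N} A_i\mid\mathcal{C})=\mathbb{P}(A_1\mid\mathcal{C})\prod_{i=2}^{N}\mathbb{P}(A_i\mid A_1,\ldots,A_{i-1},\mathcal{C})$ collapses to the stated product.

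To identify each bracket with $\mathbb{P}(A_i\mid A_{i-1},\mathcal{C})$, I would condition on the connecting value $Z_{t_{i-1}}=x_i$ and invoke Lemma \ref{markov_ind} at time $t_{i-1}$: since $A_{i-1}\in\sigma(Z_u:u\leq t_{i-1})$ and $A_i\in\sigma(Z_u:u\geq t_{i-1})$, given $Z_{t_{i-1}}=x_i$ the two events are conditionally independent, whence $\mathbb{P}(A_i,A_{i-1}\mid Z_{t_{i-1}}=x_i,\mathcal{C})=\mathbb{P}(A_i\mid Z_{t_{i-1}}=x_i,\mathcal{C})\,\mathbb{P}(A_{i-1}\mid Z_{t_{i-1}}=x_i,\mathcal{C})$. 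Integrating against $\mathbb{P}(Z_{t_{i-1}}\in dx_i\mid\mathcal{C})$ over $x_i<b_i$ and dividing by $P_{i-1}:=\mathbb{P}(A_{i-1}\mid\mathcal{C})$ reproduces the $i$-th factor verbatim, the two numerator probabilities being the maximum of interval $i$ started at $x_i$ and the maximum of interval $i-1$ ended at $x_i$. This part is routine and relies only on the two-sided conditional-independence property already recorded in Lemma \ref{markov_ind}; the base case $N=2$ is precisely this computation with no further input.

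The substance of the proof, and the step I expect to be the main obstacle, is the Markov-in-index property $\mathbb{P}(A_i\mid A_1,\ldots,A_{i-1},\mathcal{C})=\mathbb{P}(A_i\mid A_{i-1},\mathcal{C})$, equivalently $A_i\perp(A_1,\ldots,A_{i-2})\mid(A_{i-1},\mathcal{C})$. I would split $\mathcal{C}$ at $t_{i-1}$ into a past part $\mathcal{C}^{-}_{i-1}\in\sigma(Z_u:u\leq t_{i-1})$, the local constraint $\{Z_{t_{i-1}}<b_i\}$, and a future part $\mathcal{C}^{+}_{i-1}\in\sigma(Z_u:u\geq t_{i-1})$, and condition on $Z_{t_{i-1}}=x$. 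Lemma \ref{markov_ind} then yields $\mathbb{P}(A_i\mid A_1,\ldots,A_{i-1},\mathcal{C},Z_{t_{i-1}}=x)=\mathbb{P}(A_i\mid Z_{t_{i-1}}=x,\mathcal{C}^{+}_{i-1})$, so the earlier events act on $A_i$ only through the conditional law of the connecting value $Z_{t_{i-1}}$. The crux is therefore to show that this connecting law is insensitive to the earlier crossings, i.e. $Z_{t_{i-1}}\perp(A_1,\ldots,A_{i-2})\mid(A_{i-1},\mathcal{C})$; it is exactly here that the hypothesis $Z_{t_{i-1}}<b_i$ at every partition point is used to decouple consecutive intervals, and establishing this carefully — rather than conditioning on the intermediate \emph{values}, which would make the decoupling immediate but would not leave the connecting density $\mathbb{P}(Z_{t_{i-1}}\in dx\mid\mathcal{C})$ of the stated product — is the delicate heart of the argument. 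With this conditional independence in hand the telescoping completes the proof, and an induction on $N$ that peels off the first interval after conditioning on $Z_{t_1}$ provides a clean bookkeeping of the recursion.
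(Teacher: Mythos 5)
Your reading of the right-hand side is correct and matches the paper's own strategy: by Bayes' rule and the conditional independence of $A_{i-1}$ and $A_i$ given $Z_{t_{i-1}}$ (Lemma \ref{markov_ind} applied at the deterministic time $t_{i-1}$), each bracket is exactly $\mathbb{P}(A_i\mid A_{i-1},\mathcal{C})$, so the theorem is equivalent to the assertion that the indicator sequence $L_i=\mathbb{1}_{A_i}$ is a Markov chain in the index $i$ under $\mathbb{P}(\cdot\mid\mathcal{C})$. The paper makes precisely this reduction and its final display is your integration step.

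The genuine gap is that you never establish the Markov-in-index property; you only name it as ``the crux''. Moreover, the route you sketch for it --- proving $Z_{t_{i-1}}\perp(A_1,\ldots,A_{i-2})\mid(A_{i-1},\mathcal{C})$ by conditioning at the fixed time $t_{i-1}$ --- does not follow from the Markov property at $t_{i-1}$ alone: the earlier crossings change the conditional law of $Z_{t_{i-2}}$, hence of the entire path on $[t_{i-2},t_{i-1}]$ including its endpoint $Z_{t_{i-1}}$, and this influence is not removed merely by conditioning on the \emph{event} $A_{i-1}$ together with the endpoint constraints in $\mathcal{C}$. The paper's mechanism, which is the idea missing from your proposal, is to work at a hitting time rather than at $t_{i-1}$: it replaces $A_{i-1}$ by the equivalent event $\{\tau^{(i-1)}\le t_{i-1}\}$ for $\tau^{(i-1)}=\inf\{t\ge t_{i-2}:Z_t=b_{i-1}\}$, notes that by path continuity $Z_{\tau^{(i-1)}}=b_{i-1}$ \emph{deterministically} on this event, and then applies the conditional-independence property of Lemma \ref{markov_ind} at $\tau^{(i-1)}$, using that $A_1,\ldots,A_{i-2}$ lie in $\mathscr{F}_{\tau^{(i-1)}}$ while $A_i$ is measurable with respect to the post-$\tau^{(i-1)}$ filtration. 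Because the process is pinned at the known level $b_{i-1}$ at that random time, the earlier crossings are screened off. Your plan needs this hitting-time pinning argument (or an equivalent substitute) to close; without it the telescoping identity $\mathbb{P}\bigl(\bigcap_{i=1}^{N}A_i\mid\mathcal{C}\bigr)=\mathbb{P}(A_1\mid\mathcal{C})\prod_{i=2}^{N}\mathbb{P}(A_i\mid A_{i-1},\mathcal{C})$ remains unjustified.
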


\begin{proof}
	Let $\tau^{(i)} = \inf\left\{ t \geq t_{i-1}: Z_{t} = b_{i} \right\}$, 
	$\forall\ i = 1, 2, \cdots, N$. The event $\left\{ M_{t_{i-1}, t_i} \geq b_i \right\}$ is equivalent to $\left\{ \tau^{(i)} \leq t_i \right\}$; let $C=\left\{ Z_{t_1} < b_2, \ldots, Z_{t_{N-1}} < b_N \right\}$.
	Since $\left\{ \tau^{(i-1)} \leq t_{i-1} \right\} \subseteq \left\{ Z_{\tau^{(i-1)}} = b_{i-1} \right\}$, 
	$\forall\ i = 1, 2, \ldots, N$, we have 
	\begin{align*}
	& \mathbb{P}\left( M_{t_{i-1}, t_i} \geq b_i\, | \,  Z_0 = z_0, C, M_{t_0, t_1} \geq b_1, \ldots, M_{t_{i-2},t_{i-1}} \geq b_{i-1}  \right) \\
	&= \mathbb{P}\left( M_{t_{i-1}, t_i} \geq b_i\, | \,  Z_0 = z_0, C, M_{t_0, t_1} \geq b_1, \ldots, M_{t_{i-3},t_{i-2}} \geq b_{i-2},\tau^{(i-1)} \leq t_{i-1}  \right)\\
	&= \mathbb{P}\left( M_{t_{i-1}, t_i} \geq b_i\, | \,  Z_0 = z_0, C, M_{t_0, t_1} \geq b_1, \ldots, M_{t_{i-3},t_{i-2}} \geq b_{i-2},\tau^{(i-1)} \leq t_{i-1},Z_{\tau^{(i-1)}} = b_{i-1}  \right).
	\end{align*}
	Since $\left\{ M_{t_0, t_1} \geq b_1, \ldots, M_{t_{i-3},t_{i-2}} \geq b_{i-2} \right\} \subset \mathscr{F}_{\tau^{(i-1)}}$ and
	$\left\{ M_{t_{i-1}, t_i} \geq b_i \right\} \subset \mathscr{F}_{t_i}\backslash\mathscr{F}_{\tau^{(i-1)}}$,
	by Lemma \ref{markov_ind}, we obtain
	\begin{align*}
	& \mathbb{P}\left( M_{t_{i-1}, t_i} \geq b_i\, | \,  Z_0 = z_0, C, M_{t_0, t_1} \geq b_1, \ldots, M_{t_{i-2},t_{i-1}} \geq b_{i-1}  \right) \\
	&\hspace{5cm}= \mathbb{P}\left( M_{t_{i-1}, t_i} \geq b_i\, | \,  Z_0 = z_0, C, \tau^{(i-1)} \leq t_{i-1}, Z_{\tau^{(i-1)}} = b_{i-1}  \right).
	\end{align*}
	Since $\left\{ \tau^{(i-1)} \leq t_{i-1} \right\} \subseteq \left\{ Z_{\tau^{(i-1)}} = b_{i-1}  \right\}$, the above formula equals to
	\begin{align*}
	\mathbb{P}\left( M_{t_{i-1}, t_i} \geq b_i\, | \,  Z_0 = z_0, C, \tau^{(i-1)} \leq t_{i-1}  \right)=\mathbb{P}\left( M_{t_{i-1}, t_i} \geq b_i\, | \,  Z_0 = z_0, C,  M_{t_{i-2},t_{i-1}} \geq b_{i-1} \right).
	\end{align*}
	{This means that conditional on the event $C$, the discrete process $(L_i)_{i\in\mathbb{N}}$, defined by $L_i =\mathbb{I}_{ M_{t_{i-1}, t_i} \geq b_i }$, is a discrete Markov process.} 
	Hence, we have
	\begin{align*}
	&\mathbb{P}\left( M_{t_0, t_1} \geq b_1, \ldots, M_{t_{N-1},t_N} \geq b_N\, | \, Z_{t_0} = z_0 < b_1, Z_{t_1} < b_2, \ldots, Z_{t_{N-1}} < b_N \right)\\
	&= \mathbb{P}\left( M_{t_{N-1},t_N} \geq b_N\, | \,  Z_0 = z_0, C, M_{t_0, t_1} \geq b_1, \ldots, M_{t_{N-2},t_{N-1}} \geq b_{N-1} \right) \\
	&\hspace{0.75cm}\times \mathbb{P}\left( M_{t_{N-2},t_{N-1}} \geq b_{N-1}\, | \,  Z_0 = z_0, C, M_{t_0, t_1} \geq b_1, \ldots, M_{t_{N-3},t_{N-2}} \geq b_{N-2} \right)\times\ \cdots\nonumber\\
	&\hspace{0.75cm}\times\mathbb{P}\left( M_{t_0, t_1} \geq b_1\, | \,  Z_0 = z_0, C \right)\\
	&=\mathbb{P}\left( M_{t_0, t_1} \geq b_1\, | \,  Z_0 = z_0, C \right) \prod_{i=2}^{N} \mathbb{P}\left( M_{t_{i-1},t_i} \geq b_i\, | \,  Z_0 = z_0, C, M_{t_{i-2},t_{i-1}} \geq b_{i-1} \right).
	\end{align*}
	Based on the Markov property, we have
	\begin{align*}
	&\mathbb{P}\left( M_{t_{i-1},t_i} \geq b_i\, | \,  Z_0 = z_0, C, M_{t_{i-2},t_{i-1}} \geq b_{i-1} \right) \\
	&=  \int_{-\infty}^{b_i} \mathbb{P}\left( M_{t_{i-1},t_i} \geq b_i\, | \,  Z_{t_{i-1}} = x, C \right)\mathbb{P}\left( Z_{t_{i-1}} \in \rd x\, | \,  Z_0 = z_0, M_{t_{i-2},t_{i-1}} \geq b_{i-1}, C \right).
	\end{align*}
	Therefore, 
	\begin{align*}
	&\mathbb{P}\left( M_{t_0, t_1} \geq b_1, \ldots, M_{t_{N-1},t_N} \geq b_N\, | \,  Z_{t_0} = z_0 < b_1, Z_{t_1} < b_2, \ldots, Z_{t_{N-1}} < b_N \right)\\
	&= \mathbb{P}\left( M_{t_0, t_1} \geq b_1\, | \,  Z_0 = z_0,C \right)\nonumber\\
	&\times\prod_{i=2}^{N} \left[ \int_{-\infty}^{b_i} \mathbb{P}\left( M_{t_{i-1},t_i} \geq b_i\, | \,  Z_{t_{i-1}} = x, C \right)\mathbb{P}\left( Z_{t_{i-1}} \in \rd x\, | \,  Z_0 = z_0, M_{t_{i-2},t_{i-1}} \geq b_{i-1}, C \right)  \right],
	\end{align*}
	and the result stated in the theorem follows.	
\end{proof}
Here, given that $\{ Z_{t_0} < b_1, Z_{t_1} < b_2, \ldots, Z_{t_{N-1}} < b_N \}$, {the discrete process $(L_i)_{i\in\mathbb{N}}$ defined by $L_i =\mathbb{1}_{ M_{t_{i-1}, t_i} \geq b_i }$ is a discrete Markov process}. We can simplify the previous nested integral to the product of multiple single integrals under restrictions. This is significantly more efficient from a computational viewpoint. However, the joint distribution function $\mathbb{P}\left( M_{t_0, t_1} < b_1, \cdots, M_{t_{N-1},t_N} < b_N\, | \,  Z_0 = z_0 \right)$ does not admit such a simplification. One has to utilize the nested integral formula in Theorem \ref{main_thm} to compute it, although this can also be computed efficiently with the two schemes presented in Section \ref{num}. 
\subsection{Non-matching time-discretization}\label{nonmatching}
As discussed before, we may have non-matching time-discretization schemes for the piece-wise constant functions $\mu(t)$, $\lambda(t)$, $\sigma(t)$ and $b(t)$. In such a situation, the process is still continuous and Markov. By Theorem \ref{main_thm}, if we have
\begin{align*}
\mathbb{P}\left( M_{t_{i-1},t_i} \geq b_i \, | \,  Z_{t_{i-1}} = z_{i-1}, Z_{t_i} = z_i \right) \text{ and } \mathbb{P}\left( M_{t_{N-1},t_N} \geq b_N \, | \,  Z_{t_{N-1}} = z_{N-1} \right)
\end{align*}
$\forall i = 1, \ldots, N-1$, the joint distribution and survival function for the maxima of the inhomogeneous OU-process in consecutive intervals can still be calculated. We have the following two sub-cases for non-matching time-discretizations in the interval $[t_{i-1},t_i]$. 

\begin{itemize}
	\item[]
	\textbf{Case 1: Matching time-discretization for $\mu(t)$, $\lambda(t)$ and $\sigma(t)$, but non-matching for $b(t)$.} An example of this case is shown in Figure \ref{case1time}. Here, the time-discretizations for $\mu(t)$, $\lambda(t)$ and $\sigma(t)$ are the same.
	\begin{figure}[H]
		\centering
		\includegraphics[scale=0.35]{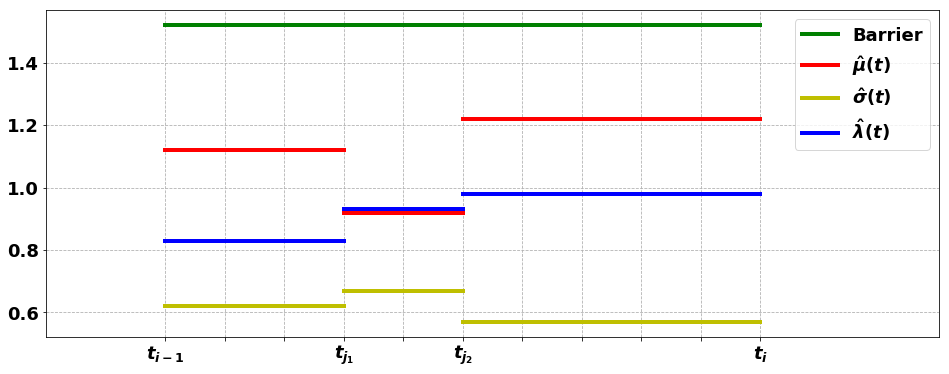}
		\caption{Matching time-discretisation for $\mu(t)$, $\lambda(t)$ and $\sigma(t)$, but  different for $b(t)$.}\label{case1time}
	\end{figure}
	In this case, $\mathbb{P}\left( M_{t_{i-1},t_i} \geq b_i \, | \,  Z_{t_{i-1}} = z_{i-1} \right)$ can still be calculated by Theorem \ref{main_thm}. We have:
	\begin{equation*}
	\mathbb{P}\left( M_{t_{i-1},t_i} \geq b_i \, | \,  Z_{t_{i-1}} = z_{i-1} \right)=1 -  \mathbb{P}\left( M_{t_{i-1},t_{j_1}} < b_i, M_{t_{j_1},t_{j_2}} < b_i, M_{t_{j_2},t_{i}} < b_i \, | \,  Z_{t_{i-1}} = z_{i-1}\right),
	\end{equation*}
	which can be solved by a nested integration formula, see Theorem \ref{main_thm}, with the local homogeneous property for each sub-interval. In terms of $\mathbb{P}\left( M_{t_{i-1},t_i} \geq b_i \, | \,  Z_{t_{i-1}} = z_{i-1}, Z_{t_i} = z_i \right)$, it also follows by Theorem \ref{main_thm} that 
	\begin{align*}
	&\mathbb{P}\left( M_{t_{i-1},t_i} \geq b_i \, | \,  Z_{t_{i-1}} = z_{i-1}, Z_{t_i} = z_i \right)\\
	&= 1 - \mathbb{P}\left( M_{t_{i-1},t_{j_1}} < b_i, M_{t_{j_1},t_{j_2}} < b_i, M_{t_{j_2},t_{i}} < b_i \, | \,  Z_{t_{i-1}} = z_{i-1}, Z_{t_{i}} = z_{i}\right)\\
	&=1- \int_{\mathbb{R}} \mathbb{P}\left( Z_{t_{j_1}}\in \rd x \, | \,  Z_{t_{i-1}} = z_{i-1}, Z_{t_{i}} = z_{i} \right) \times \\
	&\hspace{0.5cm} \mathbb{P}\left( M_{t_{i-1},t_{j_1}} < b_i, M_{t_{j_1},t_{j_2}} < b_i, M_{t_{j_2},t_{i}} < b_i \, | \,  Z_{t_{i-1}} = z_{i-1}, Z_{t_{j_1}} = x, Z_{t_{i}} = z_{i}\right)\\
	&= 1- \int_{\mathbb{R}} \mathbb{P}\left( M_{t_{i-1},t_{j_1}} < b_i \, | \,  Z_{t_{i-1}} = z_{i-1}, Z_{t_{j_1}} = x \right) \\
	&\hspace{1cm}\times \mathbb{P}\left( M_{t_{j_1},t_{j_2}} < b_i, M_{t_{j_2},t_{i}} < b_i \, | \,  Z_{t_{j_1}} = x, Z_{t_{i}} = z_{i}\right)\frac{p(t_{{j_1}}, t_{i}, x, z_{i}) p(t_{i-1}, t_{{j_1}}, z_{i-1},x)}{p(t_{i-1}, t_{i}, z_{i-1},z_i)} \rd x.
	\end{align*}
This can be simplified further to obtain	
	\begin{align*}
	&\mathbb{P}\left( M_{t_{i-1},t_i} \geq b_i \, | \,  Z_{t_{i-1}} = z_{i-1}, Z_{t_i} = z_i \right)\\
	&\quad= 1- \int_{\mathbb{R}} \mathbb{P}\left( M_{t_{i-1},t_{j_1}} < b_i \, | \,  Z_{t_{i-1}} = z_{i-1}, Z_{t_{j_1}} = x \right)\\
	&\quad\hspace{1cm}\times\int_{\mathbb{R}} \mathbb{P}\left( M_{t_{j_1},t_{j_2}} < b_i \, | \,  Z_{t_{j_1}} = x, Z_{t_{j_2}} = y\right)\mathbb{P}\left( M_{t_{j_2},t_{i}} < b_i \, | \,  Z_{t_{j_2}} = y, Z_{t_{i}} = z_{i}\right)\\
	&\hspace{1.75cm}\times \frac{p(t_{{j_2}}, t_{i}, y, z_{i}) p(t_{j_1}, t_{{j_2}}, x,y) p(t_{i-1}, t_{{j_1}}, z_{i-1},x)}{p(t_{i-1}, t_{i}, z_{i-1},z_i)}  \rd y \rd x.
	\end{align*}
	Theorem \ref{sim_int} simplifies the nested integral in Theorem \ref{main_thm} to a product of single integrals, provided that some additional constraints are satisfied. For a non-matching time-discretization, Theorem \ref{sim_int} can still be applied. However, the terms $\mathbb{P}\left( M_{t_{i-2},t_{i-1}} \geq b_{i-1} \, | \,  Z_{t_{i-1}} = x_i, Z_0 = z_0 \right)$ and $\mathbb{P}\left( M_{t_{i-1},t_i} \geq b_i \, | \,  Z_{t_{i-1}} = x_i \right)$ can only be evaluated by the nested integral in Theorem \ref{main_thm}. 
	\item[] 
	\
	
	\textbf{Case 2: Non-matching time-discretizations for any of the functions $\mu(t)$, $\lambda(t)$, $\sigma(t)$ and $b(t)$.}\\
	An example of this case is shown in Figure \ref{case2time}. This case can be reduced back to Case 1 by taking the union of all the time-discretizations steps as the overall discretizations scheme. For example in Figure \ref{case2time}, we can consider it as a special case of Case 1 for the time steps $t_{i-1}$, $t_{j_1}$, $t_{j_2}$, $t_{j_3}$, $t_{j_4}$, $t_{j_5}$ and $t_i$.
	\begin{figure}[H]
		\centering
		\includegraphics[scale=0.35]{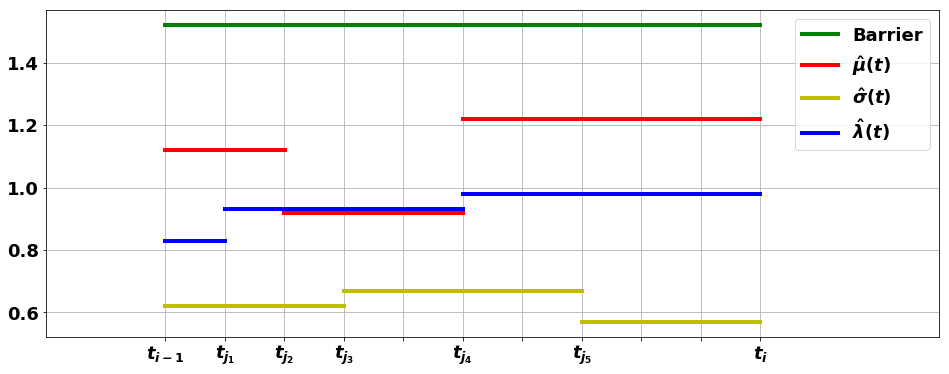}
		\caption{Non-matching time-discretization for any of $\mu(t),\lambda(t),\sigma(t)$ and $b(t)$.}\label{case2time}
	\end{figure}
\end{itemize}
\begin{remark}
	When the discretization is non-matching, we have two layers of nested integration:
	\begin{itemize}
		\item[a)] The nested integration due to non-matching time-discretization;
		\item[b)] The nested integration arising from the application of Theorem \ref{main_thm}.
	\end{itemize}
	By Theorem \ref{sim_int}, one can simplify the nested integral in b) to a product of single integrations under some restrictions. This can reduce the computational complexity. Although the nested integral in a) cannot be further reduced, in practice, if the
	variations of the piece-wise constants within a single segment are much smaller than the variations of the piece-wise constants among all the segments, one can use the matching time-discretization as an efficient approximation.
\end{remark}
\section{Computational methods and numerical results}\label{num}
Thus far, we have obtained decomposition formulae for both, the joint distribution and the survival function for the maxima of a continuous Markov process in consecutive intervals. For convenience, in this section we take the survival function for a standardized OU-process in consecutive intervals as an example to illustrate the computational methods. Moreover, for simplicity, we consider the case that the lengths of all the time intervals are constant $\Delta t$, i.e. $t_i = i \Delta t$ for $i = 0, 1, 2, ..., N$. 
\begin{corollary}\label{comp_prop}
	Let $(Z_t)_{t\geq 0}$ be a standardized OU-process and 
	$M_{t_{i-1}, t_i} = \sup_{t\in[t_{i-1}, t_i)} Z_t,$
	where $t_i = i \Delta t$, for $i = 0, 1, 2, ..., N$. Then,
	\begin{align}
	&\mathbb{P}\left( M_{t_{0},t_1} \geq b_1, \cdots, M_{t_{N-1},t_N} \geq b_N \, | \,  Z_{t_0} = z_0 \right)\notag\\ 
	&= \int_{\mathbb{R}} \kappa\left(z_{0},  z_{1}, b_1 \right) \cdots \int_{\mathbb{R}} \kappa\left( z_{N-3},  z_{N-2}, b_{N-2} \right)\int_{\mathbb{R}} \kappa\left( z_{N-2},  z_{N-1}, b_{N-1} \right) \label{comp_formula}\bar{q}\left( z_{N-1}, b_N \right) \rd z_{N-1} \rd z_{N-2} \cdots \rd z_1,
	\end{align}
	where 
	\begin{align*}
		\bar{q}\left( z_{N-1}, b_N \right) =
		\left[1 - \sum\limits_{k = 1}^{\infty} c_k^{(N)} \e^{-\alpha_k^{(N)} t} \mathscr{H}_{\alpha_k^{(N)}}\left( -z_{N-1} \right) \right] \mathbb{1}\left({z_{N-1} < b_N}\right)+ \mathbb{1}\left({z_{N-1} \geq b_N}\right)
	\end{align*}	
and		
	\begin{align*}	
		\kappa(z_{i-1}, z_i, b_i) =& \sum_{k=1}^{\infty}  {c_k^{(i)} \alpha_k^{(i)} \mathscr{H}_{\alpha_k^{(i)}}( -z_{N-1}) }\int_{\e^{-\Delta t}}^{1} \frac{x^{-\alpha_k^{(i)} - 1}\mathbb{1}\left({z_{i-1} < b_i}\right) \mathbb{1}\left({z_i < b_i}\right)}{\sqrt{\pi(1-x^2)} }\\
		&\hspace{4.25cm}\times\exp\left\{ -\frac{(z_i - b_i x)^2}{1-x^2}- \alpha_k^{(i)} \Delta t \right\}  {\rd x} \\
		&+ p(0, \Delta t, z_{i-1}, z_{i}) \left( 1 - \mathbb{1}\left({z_{i-1} < b_i}\right) \mathbb{1}\left({z_i < b_i}\right) \right).
	\end{align*}
	Here, $\mathscr{H}_{\alpha}(\cdot)$ is the Hermite function with parameter $\alpha$, $\left\{\alpha_k^{(i)}\right\}$ are the solutions to the equation
	$
	\mathscr{H}_{\alpha}\left( -b_i \right) = 0
	$
	and 
	$
	c_k^{(i)} = -1/(\alpha_k^{(i)} \partial_{\alpha_k^{(i)}}\mathscr{H}_{\alpha_k^{(i)}}( -b_i)). 
	$
\end{corollary}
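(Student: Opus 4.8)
The plan is to specialise the nested-integral representation of Theorem \ref{main_thm} to the standardized OU-process and then to evaluate its two building blocks, $\kappa_i$ and $\bar q$, in closed (series) form. First I would invoke Theorem \ref{main_thm} with $(Z_t)$ the standardized OU-process, which immediately yields the nested integral with kernels $\kappa_i(t_{i-1},t_i,z_{i-1},z_i,b_i)=\mathbb{P}(M_{t_{i-1},t_i}\geq b_i\,|\,Z_{t_{i-1}}=z_{i-1},Z_{t_i}=z_i)\,p(t_{i-1},t_i,z_{i-1},z_i)$ and final factor $\bar q=1-\mathbb{P}(M_{t_{N-1},t_N}<b_N\,|\,Z_{t_{N-1}}=z_{N-1})$. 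Because the standardized OU-process is time-homogeneous and every interval has the common length $\Delta t$, each $\kappa_i$ and $\bar q$ depends only on the interval length, on the endpoints and on the relevant barrier; this is what lets me suppress the time arguments and write $\kappa(z_{i-1},z_i,b_i)$ and $\bar q(z_{N-1},b_N)$. It then remains to compute these two objects explicitly.

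For $\bar q$ I would read off $\mathbb{P}(M_{t_{N-1},t_N}<b_N\,|\,Z_{t_{N-1}}=z_{N-1})$ as the FPT survival function of the standardized OU-process started at $z_{N-1}$ over the horizon $\Delta t$. When $z_{N-1}<b_N$, Theorem \ref{OUhit} (equivalently Corollary \ref{OUhitformula} with $\lambda=\sigma=1$ and $\mu=0$) gives the Hermite series $\sum_k c_k^{(N)}\e^{-\alpha_k^{(N)}t}\mathscr{H}_{\alpha_k^{(N)}}(-z_{N-1})$; when $z_{N-1}\geq b_N$ the maximum already exceeds $b_N$, so the survival probability is $0$. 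Taking the complement and splitting on the two cases with the indicators $\mathbb{1}(z_{N-1}<b_N)$ and $\mathbb{1}(z_{N-1}\geq b_N)$ produces exactly the stated expression for $\bar q$.

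The substantive computation is $\kappa$. Applying Proposition \ref{prop_bridge} on the interval of length $\Delta t$ gives, when $z_{i-1},z_i<b_i$, the identity $\mathbb{P}(M\geq b_i\,|\,Z_{t_{i-1}}=z_{i-1},Z_{t_i}=z_i)=\int_0^{\Delta t}\frac{p(t,\Delta t,b_i,z_i)}{p(0,\Delta t,z_{i-1},z_i)}f_{\tau}(t;z_{i-1})\,\rd t$, and the value $1$ otherwise; multiplying by $p(0,\Delta t,z_{i-1},z_i)$ cancels the denominator and leaves $\int_0^{\Delta t}p(t,\Delta t,b_i,z_i)f_\tau(t;z_{i-1})\,\rd t$ on the region where both endpoints lie below $b_i$, and $p(0,\Delta t,z_{i-1},z_i)$ on its complement. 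I would then insert the explicit Gaussian transition density of the standardized OU-process, $p(0,s,u,v)=\pi^{-1/2}(1-\e^{-2s})^{-1/2}\exp(-(v-u\e^{-s})^2/(1-\e^{-2s}))$, taken with interval length $s=\Delta t-t$ for the factor $p(t,\Delta t,b_i,z_i)$, together with the FPT density $f_\tau(t;z_{i-1})=-\partial_t\bar{F}_\tau=\sum_k c_k^{(i)}\alpha_k^{(i)}\e^{-\alpha_k^{(i)}t}\mathscr{H}_{\alpha_k^{(i)}}(-z_{i-1})$ obtained by differentiating the series of Theorem \ref{OUhit}. The change of variables $x=\e^{-(\Delta t-t)}$, for which $\rd t=\rd x/x$, the limits become $\e^{-\Delta t}$ and $1$, and $\e^{-\alpha_k^{(i)}t}=\e^{-\alpha_k^{(i)}\Delta t}x^{-\alpha_k^{(i)}}$, turns the Gaussian factor into $\pi^{-1/2}(1-x^2)^{-1/2}\exp(-(z_i-b_ix)^2/(1-x^2))$ and reproduces the integrand in the statement; attaching the indicators $\mathbb{1}(z_{i-1}<b_i)\mathbb{1}(z_i<b_i)$ and the complementary term $p(0,\Delta t,z_{i-1},z_i)(1-\mathbb{1}(z_{i-1}<b_i)\mathbb{1}(z_i<b_i))$ gives the claimed form of $\kappa$. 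Substituting $\kappa$ and $\bar q$ back into the nested integral of Theorem \ref{main_thm} and relabelling then yields (\ref{comp_formula}).

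I expect the main obstacle to be analytic rather than algebraic: justifying that the FPT density arises by term-by-term differentiation of the survival series, and that the resulting sum may be pulled outside the time integral (an application of Fubini together with dominated convergence). Both interchanges rest on the absolute convergence and the $O(\e^{-2K\lambda t})$ tail bound established in Theorem \ref{thm_error}, which must be seen to hold uniformly enough on $[0,\Delta t]$ and against the integrable Gaussian kernel; once this is secured, the change of variables and the indicator bookkeeping are routine.
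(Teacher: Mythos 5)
Your proposal matches the paper's own proof essentially step for step: Theorem \ref{main_thm} supplies the nested integral, Theorem \ref{OUhit} gives $\bar q$ via the Hermite series with the indicator split, and Proposition \ref{prop_bridge} combined with time-homogeneity, the explicit Gaussian transition density, and the substitution $x=\e^{-(\Delta t - t)}$ yields $\kappa$ exactly as in Appendix C. Your closing remark on justifying the term-by-term differentiation and the interchange of sum and integral via Theorem \ref{thm_error} is a point the paper leaves implicit, and your writing $\mathscr{H}_{\alpha_k^{(i)}}(-z_{i-1})$ rather than the paper's $\mathscr{H}_{\alpha_k^{(i)}}(-z_{N-1})$ in $\kappa$ corrects what appears to be a typo in the stated formula.
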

\begin{proof}
See Appendix \ref{AppC}.
\end{proof}

The iterated integral can be approximated efficiently by quadrature schemes or Monte Carlo integration methods. We describe the two methods in what follows. 

\subsection{Quadrature scheme}
We first present a quadrature scheme to evaluate
\begin{align}
I :&=\mathbb{P}\left( M_{t_{0},t_1} \geq b_1, \cdots, M_{t_{N-1},t_N} \geq b_N \, | \, \, Z_{t_0} = z_0 \right)\notag\\
&=\int_{\mathbb{R}} \kappa\left(z_{0},  z_{1}, b_1 \right) \cdots \int_{\mathbb{R}} \kappa\left( z_{N-3},  z_{N-2}, b_{N-2} \right)\int_{\mathbb{R}} \kappa\left( z_{N-2},  z_{N-1}, b_{N-1} \right) \label{approx_int}\bar{q}\left( z_{N-1}, b_N \right) \rd z_{N-1} \cdots \rd z_1.
\end{align}
Since the OU-process is defined on $\R$, we choose a sufficiently large number $Z_{\text{max}}$ and a sufficiently small number $Z_{\text{min}}$.  We partition the domain $[Z_{\text{min}}, Z_{\text{max}}]$ into $L$ pieces of equal length $\delta z$, where the grid points are denoted $Z_{\text{min}} = z^{(1)} < z^{(2)} < \cdots < z^{(L)} = Z_{\text{max}} $. We can then approximate the integration as follows: 
\begin{proposition}\label{Quad}
	The nested integral in Equation (\ref{comp_formula}) can be approximated by the product of matrices 
	$
	I \approx \prod_{i=1}^{N-1}K_i  \bar Q (\delta z)^{N-1},
	$
	where for $i=1$
	\begin{eqnarray*}
		K_{1} &=&
		\left[ \kappa\left( z_0, z_1^{(1)} \right), \kappa\left( z_0, z_1^{(2)} \right), \cdots, \kappa\left( z_0, z_1^{(L)} \right) \right],
	\end{eqnarray*}
	for $i = 2,3,\cdots, N-1$,
	\begin{align*}
	K_i =& \left[
	\begin{array}{cccc}
	\kappa\left(z_{i-1}^{(1)}, z_{i}^{(1)}, b_i \right) & \kappa\left(z_{i-1}^{(1)}, z_{i}^{(2)}, b_i\right) & \cdots & \kappa\left(z_{i-1}^{(1)}, z_{i}^{(L)}, b_i\right)\\
	\kappa\left( z_{i-1}^{(2)}, z_{i}^{(1)} , b_i\right) & \kappa\left( z_{i-1}^{(2)}, z_{i}^{(2)} , b_i\right) & \cdots & \kappa\left( z_{i-1}^{(2)}, z_{i}^{(L)}, b_i \right)\\
	\vdots & \vdots &  & \vdots\\
	\kappa\left( z_{i-1}^{(L)}, z_{i}^{(1)}, b_i \right) & \kappa\left( z_{i-1}^{(L)}, z_{i}^{(2)}, b_i \right) & \cdots & \kappa\left( z_{i-1}^{(L)}, z_{i}^{(L)}, b_i \right)
	\end{array}
	\right],
	\end{align*}
	and 
	\begin{eqnarray*}
		\bar Q &=& \left[
		\bar q\left( z_{N-1}^{(1)}, b_N \right), \bar q\left( z_{N-1}^{(2)}, b_N \right), \cdots, \bar q\left( z_{N-1}^{(L)}, b_N \right)
		\right]^\top,
	\end{eqnarray*}
\end{proposition}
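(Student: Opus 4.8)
The plan is to realise the $(N-1)$-fold nested integral in Equation~(\ref{comp_formula}) as an iterated sequence of one-dimensional quadratures and to recognise each such quadrature as a matrix--vector multiplication, so that the whole object collapses to the announced product. First I would replace each integration over $\R$ by an integration over the truncated interval $[Z_{\text{min}}, Z_{\text{max}}]$; this is legitimate up to a controllable error because, for the standardised OU-process, the transition density $p$ and the conditioned survival functions appearing inside $\kappa$ and $\bar q$ decay rapidly in the tails, so the contribution of $\R\setminus[Z_{\text{min}},Z_{\text{max}}]$ is negligible once $Z_{\text{min}}$ is small and $Z_{\text{max}}$ large enough. On $[Z_{\text{min}},Z_{\text{max}}]$ I would apply the rectangle (Riemann-sum) quadrature on the uniform grid $z^{(1)}<\cdots<z^{(L)}$ of mesh $\delta z$, that is $\int g(z)\,\rd z \approx \sum_{l=1}^{L} g(z^{(l)})\,\delta z$, to every one of the $N-1$ integration variables $z_1,\ldots,z_{N-1}$.

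Next I would carry out the reduction from the innermost integral outward. Fixing $z_{N-2}=z_{N-2}^{(l)}$, the innermost integral over $z_{N-1}$ is approximated by $\sum_{j=1}^{L}\kappa\!\left(z_{N-2}^{(l)}, z_{N-1}^{(j)}, b_{N-1}\right)\bar q\!\left(z_{N-1}^{(j)}, b_N\right)\delta z$, which is exactly the $l$-th entry of $K_{N-1}\bar Q\,\delta z$ with $K_{N-1}$ and $\bar Q$ as defined in the statement. Replacing the innermost integral by this vector, the next integration over $z_{N-2}$ becomes, entrywise in $z_{N-3}$, a further rectangle sum whose kernel is the matrix $K_{N-2}$, so it corresponds to left-multiplication by $K_{N-2}$ together with a second factor $\delta z$. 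Iterating this identification (a straightforward finite induction on the number of remaining variables) shows that each successive integration over $z_i$ amounts to left-multiplying the running column vector by $K_i$ and contributing one factor of $\delta z$.

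The final, outermost integration over $z_1$ is special because $z_0$ is the fixed initial value $Z_{t_0}=z_0$ rather than a grid variable; hence the quadrature produces the single row vector $K_1=\left[\kappa(z_0,z_1^{(1)},b_1),\ldots,\kappa(z_0,z_1^{(L)},b_1)\right]$ and contracts the remaining column vector to a scalar. Collecting the $N-1$ integrations then yields $I \approx K_1 K_2 \cdots K_{N-1}\,\bar Q\,(\delta z)^{N-1} = \prod_{i=1}^{N-1}K_i\,\bar Q\,(\delta z)^{N-1}$, as claimed. The one point that needs care, and which I view as the only genuine obstacle, is the bookkeeping of orientation: one must check that the innermost integral corresponds to the right-most factor $\bar Q$, that the nesting order $\rd z_{N-1}\cdots\rd z_1$ matches the left-to-right order $K_1,\ldots,K_{N-1}$ of the product, and that the endpoint asymmetry---a row vector $K_1$ at one end, a column vector $\bar Q$ at the other---is consistent with $z_0$ being fixed while $z_{N-1}$ is summed against $\bar q$. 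No deeper analytic input is needed beyond the tail-decay justification for the domain truncation and the standard convergence of the rectangle rule.
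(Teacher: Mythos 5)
Your proposal is correct and follows essentially the same route as the paper: discretize each of the $N-1$ integrals by the rectangle rule on the grid $z^{(1)},\ldots,z^{(L)}$ and identify the resulting nested sums, level by level, as matrix--vector products, with $\bar Q$ absorbing the innermost sum and $K_1$ the outermost row contraction against the fixed $z_0$. The paper formalises the same induction by introducing auxiliary vectors $F_i$ of partial inner sums and peeling off one factor $K_i$ at a time, but the content and the orientation bookkeeping are identical to yours.
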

\begin{proof}
See Appendix \ref{AppC}.
\end{proof}

\begin{corollary}\label{Quad_coro}
	Assuming that $\{ Z_{t_0} < b_1, Z_{t_1} < b_2, \ldots, Z_{t_{N-1}} < b_N \}$, then the nested integral (\ref{comp_formula}) can be reduced to the product of single integrals, which can be evaluated efficiently in vector form as follows:
	\begin{align*}
	I &\approx C \left(\prod_{i=1}^{N} V_i W_i (\delta z)\right)
	\end{align*}
	where 
	\begin{align*}
	C &= \left(\prod_{i=2}^{N-1} \frac{1}{\mathbb{P}\left( M_{t_{i-1},t_{i}} \geq b_{i} \, | \,  Z_0 = z_0  \right)}\right)\\
	V_i &= \left[ \mathbb{P}\left( M_{t_{i-1},t_i} \geq b_i \, | \,  Z_{t_{i-1}} = z^{(1)} \right), \cdots, \mathbb{P}\left( M_{t_{i-1},t_i} \geq b_i \, | \,  Z_{t_{i-1}} = z^{(L)} \right) \right]\\
	W_i &= \left[\begin{array}{c}
	\mathbb{P}\left( M_{t_{i-2},t_{i-1}} \geq b_{i-1} \, | \,  Z_{t_{i-1}} = z^{(1)}, Z_0 = z_0 \right) p(0,t_{i-1};z_0,z^{(1)})\\
	\vdots\\
	\mathbb{P}\left( M_{t_{i-2},t_{i-1}} \geq b_{i-1} \, | \,  Z_{t_{i-1}} = z^{(L)}, Z_0 = z_0 \right) p(0,t_{i-1};z_0,z^{(L)})
	\end{array}
	\right].
	\end{align*} 
\end{corollary}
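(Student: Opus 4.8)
The plan is to derive the vectorised formula by discretising the product-of-single-integrals representation already supplied by Theorem \ref{sim_int} and then reorganising its constant factors. Writing $\mathcal{C}=\{Z_{t_1}<b_2,\ldots,Z_{t_{N-1}}<b_N\}$ for the conditioning event, I would first invoke Theorem \ref{sim_int}: under $\{Z_{t_0}=z_0<b_1\}\cap\mathcal{C}$ it already expresses the conditional joint survival function as the leading factor $\mathbb{P}(M_{t_0,t_1}\geq b_1\,|\,Z_0=z_0,\mathcal{C})$ times a product over $i=2,\ldots,N$ of single integrals. Each integral pairs, against the conditional law of $Z_{t_{i-1}}$, the interval-$i$ crossing probability with the interval-$(i-1)$ crossing probability, divided by the $x$-independent denominator $\mathbb{P}(M_{t_{i-2},t_{i-1}}\geq b_{i-1}\,|\,\mathcal{C})$.

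Next I would collapse the conditioning inside each integrand to its local form by the Markov property (Lemma \ref{markov_ind}), reusing the argument from the proof of Theorem \ref{sim_int}. Given $Z_{t_{i-1}}=x$, the event $\{M_{t_{i-1},t_i}\geq b_i\}\in\sigma(Z_u:u\geq t_{i-1})$ decouples from the earlier conditioning, so its probability reduces to $\mathbb{P}(M_{t_{i-1},t_i}\geq b_i\,|\,Z_{t_{i-1}}=x)$, which is exactly the $j$-th entry of $V_i$ at $x=z^{(j)}$. Likewise, the factor $\mathbb{P}(M_{t_{i-2},t_{i-1}}\geq b_{i-1}\,|\,Z_{t_{i-1}}=x,\ldots)$ together with the conditional density of $Z_{t_{i-1}}$ collapses, through the transition kernel of the process, to $\mathbb{P}(M_{t_{i-2},t_{i-1}}\geq b_{i-1}\,|\,Z_{t_{i-1}}=x,Z_0=z_0)\,p(0,t_{i-1};z_0,x)$, the $j$-th entry of $W_i$.

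I would then strip off the constants. Each denominator $\mathbb{P}(M_{t_{i-2},t_{i-1}}\geq b_{i-1}\,|\,\mathcal{C})$ is independent of the integration variable and factors out of its integral. Tracking indices, the leading factor $\mathbb{P}(M_{t_0,t_1}\geq b_1\,|\,Z_0=z_0,\mathcal{C})$ coincides with the interval-$1$ denominator and cancels it, leaving reciprocals that run precisely over the intervals $2,\ldots,N-1$; these assemble into the stated prefactor $C=\prod_{i=2}^{N-1}1/\mathbb{P}(M_{t_{i-1},t_i}\geq b_i\,|\,Z_0=z_0)$, while the initial contribution is carried by the boundary term $V_1 W_1$, whose degenerate transition density at $t_0=0$ pins the starting value to $z_0$.

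Finally I would discretise: partitioning $[Z_{\text{min}},Z_{\text{max}}]$ into the $L$ nodes $z^{(1)}<\cdots<z^{(L)}$ of spacing $\delta z$ and applying the rectangle rule to each single integral turns $\sum_{j=1}^{L}V_i[j]\,W_i[j]\,\delta z$ into the matrix product $V_i W_i\,\delta z$, giving $I\approx C\prod_{i=1}^{N}V_i W_i\,\delta z$. The step I expect to be the main obstacle is the second one: justifying that the global conditioning event $\mathcal{C}$ may be replaced by the purely local conditioning that appears in $V_i$ and $W_i$. This is the conditional-independence content of Lemma \ref{markov_ind} together with the discrete-Markov property of $(L_i)_{i\in\mathbb{N}}$ established inside the proof of Theorem \ref{sim_int}; applying it cleanly to both the crossing probability and the conditional transition law, while keeping the bookkeeping consistent so that the denominators telescope into $C$, is where the care is required.
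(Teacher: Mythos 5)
Your proposal is correct and follows essentially the same route as the paper, whose own proof consists of the single remark that the result follows by rearranging Equation (\ref{sim_formula}) of Theorem \ref{sim_int}; you simply make explicit the steps (collapsing the conditioning via Lemma \ref{markov_ind}, telescoping the denominators into $C$, and applying the rectangle rule on $[Z_{\text{min}},Z_{\text{max}}]$) that the paper leaves implicit. Your index bookkeeping---the leading factor cancelling the interval-$1$ denominator so that $C$ runs over $i=2,\ldots,N-1$, and the degenerate $i=1$ term---is consistent with the stated corollary.
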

\begin{proof}
	The proof can be shown by rearranging Equation (\ref{sim_formula}) in Theorem \ref{sim_int}. 
\end{proof}

\begin{remark}
	One can see from the difference between Proposition \ref{Quad} and Corollary \ref{Quad_coro} that the complexity of the matrices operation with the simplification theorem is $O(NL)$, while the complexity of the original nested integral is $O(NL^2)$. 
\end{remark}

\subsection{Monte Carlo integration method}
The integral (\ref{approx_int}) can also be evaluated efficiently by an importance sampling approximation. 
\begin{proposition}\label{MCint}
	Assume $Z_1$, $Z_2$, ..., $Z_{N-1}$ are independent and identical random variables with density function $p: \mathbb{R} \rightarrow \mathbb{R}^+$, which first-order stochastically dominate $\varphi\left( z, z^\prime, u \right) := \kappa\left( z, z^\prime, u \right)/p(z^\prime)$. Let $Z_{i-1}^{(k_i)}$ be the $k_i$-th random number in the sample generated from the random variable $Z_{i-1}$, and let $L_i$ be the sample size of the random variable $Z_{i-1}$. Then the nested integral (\ref{comp_formula}) in Proposition \ref{comp_prop} can be approximated by the  product of matrices $I \approx \big(\prod_{i=1}^{N-1}{\Omega_i}/{L_i}  \big) \bar{\mathscr{Q}}$, where for $i=1$,
	\begin{align*}
	\Omega_{1} = &
	\left[ \varphi\left( z_0, Z_1^{(1)} \right), \varphi\left( z_0, Z_1^{(2)} \right), \cdots, \varphi\left( z_0, Z_1^{(K_1)} \right) \right],
	\end{align*}
	for $i = 2,3,\cdots, N-1$,
	\begin{align*}
	\Omega_i =& \left[
	\begin{array}{llll}
	\varphi\left(Z_{i-1}^{(1)}, Z_{i}^{(1)}, b_i \right) & \varphi\left(Z_{i-1}^{(1)}, Z_{i}^{(2)}, b_i\right) & \cdots & \varphi\left(Z_{i-1}^{(1)}, Z_{i}^{(K_{i})}, b_i\right)\\
	\varphi\left( Z_{i-1}^{(2)}, Z_{i}^{(1)} , b_i\right) & \varphi\left( Z_{i-1}^{(2)}, Z_{i}^{(2)} , b_i\right) & \cdots & \varphi\left( Z_{i-1}^{(2)}, Z_{i}^{(K_{i})}, b_i \right)\\
	\cdots & \cdots & \cdots & \cdots\\
	\varphi\left( Z_{i-1}^{(K_{i-1})}, Z_{i}^{(1)}, b_i \right) & \varphi\left( Z_{i-1}^{(K_{i-1})}, Z_{i}^{(2)}, b_i \right) & \cdots & \varphi\left( Z_{i-1}^{(K_{i-1})}, Z_{i}^{(K_{i})}, b_i \right)
	\end{array}
	\right],
	\end{align*}
	and
	\begin{align*}
	\bar{\mathscr{Q}} =& \left[
	\bar q\left( Z_{N-1}^{(1)}, b_N \right), \bar q\left( Z_{N-1}^{(2)}, b_N \right), \cdots, \bar q\left( Z_{N-1}^{(K_{N-1})}, b_N \right)
	\right]^\top.
	\end{align*}
\end{proposition}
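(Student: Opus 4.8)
The plan is to recast the nested integral (\ref{comp_formula}) as an iterated expectation under the proposal density $p$ and then replace each expectation by an empirical mean over an independent sample. Writing $K_i$ for the sample size drawn from $Z_i$, I would first use the very definition $\varphi(z, z', u) = \kappa(z, z', u)/p(z')$ to factorise every kernel as $\kappa(z_{i-1}, z_i, b_i) = \varphi(z_{i-1}, z_i, b_i)\, p(z_i)$. Substituting this into (\ref{comp_formula}) turns the product of transition kernels into a product of importance weights integrated against the factorised density $\prod_{i=1}^{N-1} p(z_i)$, which gives the exact identity
\begin{align*}
I = \mathbb{E}\left[ \varphi(z_0, Z_1, b_1)\,\varphi(Z_1, Z_2, b_2)\cdots\varphi(Z_{N-2}, Z_{N-1}, b_{N-1})\,\bar q(Z_{N-1}, b_N) \right],
\end{align*}
where $Z_1, \dots, Z_{N-1}$ are independent with common density $p$. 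This representation is the backbone of the argument.

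Next I would expose the Markovian layering of this expectation by setting $g_{N-1}(z) := \mathbb{E}_{Z_{N-1}}[\varphi(z, Z_{N-1}, b_{N-1})\,\bar q(Z_{N-1}, b_N)]$ and, for $i = N-2, \dots, 1$, $g_i(z) := \mathbb{E}_{Z_i}[\varphi(z, Z_i, b_i)\,g_{i+1}(Z_i)]$, so that $I = \mathbb{E}_{Z_1}[\varphi(z_0, Z_1, b_1)\,g_2(Z_1)]$. At each level I replace the expectation by the sample mean over the draws $Z_i^{(1)}, \dots, Z_i^{(K_i)}$, reusing one and the same sample set for every value of the outer argument. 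Unrolling the recursion then yields the tensor-product estimator
\begin{align*}
\hat I = \frac{1}{\prod_{i=1}^{N-1} K_i} \sum_{k_1=1}^{K_1}\cdots\sum_{k_{N-1}=1}^{K_{N-1}} \varphi(z_0, Z_1^{(k_1)}, b_1)\,\varphi(Z_1^{(k_1)}, Z_2^{(k_2)}, b_2)\cdots\bar q(Z_{N-1}^{(k_{N-1})}, b_N).
\end{align*}
Reading the successive index sums as matrix-vector contractions identifies $\hat I$ term by term with $\big(\prod_{i=1}^{N-1} \Omega_i/L_i\big)\,\bar{\mathscr{Q}}$: the row vector $\Omega_1$ carries the free index $k_1$, each matrix $\Omega_i$ contracts $k_{i-1}$ against $k_i$, the column vector $\bar{\mathscr{Q}}$ absorbs $k_{N-1}$, and the per-level sample sizes supply the normalisation. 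This is exactly the stated matrix product.

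Finally I would verify that $\hat I$ genuinely approximates $I$. Since the sample sets at distinct levels are independent and i.i.d.\ from $p$ within each level, every summand with fixed indices $(k_1, \dots, k_{N-1})$ involves only mutually independent draws $Z_1^{(k_1)}, \dots, Z_{N-1}^{(k_{N-1})}$; integrating these out reproduces the nested integral, so each summand has mean $I$ and hence $\mathbb{E}[\hat I] = I$, giving unbiasedness for any sample sizes. Consistency then follows by the law of large numbers applied from the innermost level outward, propagating the convergence $\hat g_i \to g_i$ up the recursion. I expect the principal obstacle to lie precisely here: because each level feeds a \emph{random} integrand $\hat g_{i+1}$ into the next, the summands of $\hat I$ share samples and are dependent, so controlling the variance requires a bound that is uniform in the outer argument. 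This is where the first-order stochastic dominance of $p$ over $\varphi$ enters, controlling the importance weights $\varphi/p$ and thereby furnishing the finite second moments needed to close the error estimate as the $K_i \to \infty$.
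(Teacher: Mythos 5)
Your proposal is correct and follows essentially the same route as the paper: the paper likewise divides and multiplies each kernel by $p(z_i)$ to obtain the importance-sampling representation $I=\int \varphi\, p \cdots$ and then invokes the same recursive contraction argument used for the quadrature scheme (Proposition \ref{Quad}), with sample points in place of grid points and $1/L_i$ in place of $\delta z$. Your closing discussion of unbiasedness, the role of sample dependence across summands, and the use of the dominance condition for variance control goes beyond what the paper records, but it supplements rather than alters the argument.
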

\begin{proof}
	The integral can be rewritten as
	\begin{align*}
	I = &\int_{\mathbb{R}} \frac{\kappa\left( z_{0},  z_{1}, b_1 \right)}{p(z_1)}p(z_1) \cdots \int_{\mathbb{R}} \frac{\kappa\left(  z_{N-3},  z_{N-2}, b_{N-2} \right)}{p(z_{N-2})}p(z_{N-2}) \\
	&\hspace{1cm}\times \int_{\mathbb{R}} \frac{\kappa\left(  z_{N-2},  z_{N-1}, b_{N-1} \right) \bar q\left( z_{N-1}, b_N \right)}{p(z_{N-1})} p(z_{N-1}) \rd z_{N-1} \rd z_{N-2} \cdots \rd z_1,
	\end{align*}
	and further as
	\begin{align*}
	I = &\int_{\mathbb{R}} \varphi\left( z_0, z_1, b_1 \right) p(z_1) \cdots \int_{\mathbb{R}} {\varphi\left(  z_{N-3},  z_{N-2}, b_{N-2} \right)} p(z_{N-2}) \\
	&\hspace{1cm}\times \int_{\mathbb{R}} {\varphi\left(  z_{N-2},  z_{N-1}, b_{N-1} \right) \bar q\left( z_{N-1}, b_N \right)} p(z_{N-1}) \rd z_{N-1} \rd z_{N-2} \cdots \rd z_1 .
	\end{align*}
	The proof can now be continued analogously to the one for Proposition \ref{Quad}. 
	\end{proof}
\subsection{Numerical analysis}
The two methods can be compared with the direct Monte Carlo approach, which is shown in Algorithm \ref{DMC_normal} (or Algorithm \ref{DMC}, a small-memory version). Since the direct Monte Carlo method needs to be implemented by a time-discretization, this method underestimates the passage-time probability, see Lemma \ref{DMC_underestimate} in Appendix \ref{DMC_analysis}. We observe that when the number of time steps increases, the results obtained by the direct Monte Carlo method align with the results obtained by the quadrature scheme and the Monte Carlo integration. However, direct Monte Carlo results become increasingly noisy when the joint passage event becomes rarer, while the quadrature scheme and Monte Carlo integration methods remain stable. This shows the quadrature and Monte Carlo integration methods can improve the accuracy if the joint passage is an infrequent event. Another interesting insight is that although the direct Monte Carlo result is more accurate for a larger number of time steps, its Monte Carlo error is bigger, too, provided that the event occurrence is infrequent, see Lemma \ref{DMC_error} in Appendix \ref{DMC_analysis}. We conclude that the direct Monte Carlo method is not suitable for the passage-time approximation. 

In the comparison of the three methods, i.e. direct Monte Carlo, quadrature scheme and Monte Carlo integration, we test the following two cases:
\begin{itemize}
	\item[1)] 
	We fix the number of consecutive intervals and change the level of the barriers in each interval;
	\item[2)]
	We fix the level of the barriers and increase the number of intervals. 
\end{itemize}
In both cases, the probability we wish to approximate becomes small when the barrier levels rise or the number of intervals increases. The direct Monte Carlo estimator will become noisy when the joint event becomes rare, see Lemma \ref{DMC_underestimate} and \ref{DMC_error} in Appendix \ref{DMC_analysis}. In this subsection, we show that the quadrature scheme and Monte Carlo integration estimators produce accurate and robust approximations and the efficiency is improved compared with the direct Monte Carlo method. The quadrature scheme and the Monte Carlo integration scheme contain two types of error source:
\begin{itemize}
	\item[1)] The truncation error from the approximation of the FPT density infinite series, and
	\item[2)] the deterministic or stochastic error from the numerical integration. 
\end{itemize}
As shown in Section \ref{homo}, the truncation error in 1) can be reduced to a small level by introducing few truncation terms. The numerical error 2) depends on its discretization size in the quadrature scheme and on the number of paths in the Monte Carlo integration scheme. This type of error can be reduced by introducing a finer discretization and/or by producing more Monte Carlo samples.

In the first numerical example we compute the probability of the maxima of a standardized OU-process in the first and the second periods to cross the levels $b_1$ and $b_2$, respectively. In the left column of Table \ref{Tab1}, seven combinations of barrier levels for the two consecutive periods are considered. In this example, we choose the number of paths and discrete time steps so to achieve an accuracy of the order $10^{-4}$ for the case $b_1 = 1$ and $b_2 = 1$.
{
	\begin{table}[H]
		\centering
		\begin{tabular}{cc|c|c|c|c|c}
			$b_1$ & $b_2$ & MC (500) & MC (1000) & MC (2000) & Quad. & MC int. \\
			\hline
			\hline
			1&1 &  $1.417\times 10^{-1}$ & $1.448\times 10^{-1}$ & $1.469\times 10^{-1}$ & $1.517\times 10^{-1}$ & $1.515\times 10^{-1}$\\
			&& $(2 \times 10^{-4})$ & $(2 \times 10^{-4})$ & $(2 \times 10^{-4})$ & $(3\times 10^{-4})$& $(6 \times 10^{-4})$\\
			\hline
			1&2 &$1.27 \times 10^{-2}$ &$1.311 \times 10^{-2}$ &$1.352 \times 10^{-2}$ &$1.426 \times 10^{-2}$ & $1.440 \times 10^{-2}$\\
			&& $(1\times 10^{-4})$ & $(8\times 10^{-5})$ & $(8\times 10^{-5})$ &$(2\times 10^{-5})$ & $(7\times 10^{-5})$\\
			\hline
			2&1 & $5.08\times 10^{-3}$ &$5.38\times 10^{-3}$ &$5.54\times 10^{-3}$ &$5.837\times 10^{-3}$ &$5.843\times 10^{-3}$\\
			&& $(7\times 10^{-5})$ & $(5\times 10^{-5})$ & $(6\times 10^{-5})$ & $(2\times 10^{-5})$& $(3\times 10^{-5})$\\
			\hline
			2&2 &$2.35\times 10^{-3}$ &$2.50\times 10^{-3}$ &$2.62\times 10^{-3}$ &$2.72\times 10^{-3}$ &$2.74\times 10^{-3}$\\
			&& $(4\times 10^{-5})$ & $(4\times 10^{-5})$ & $(4\times 10^{-5})$ & $(2\times 10^{-5})$& $(3\times 10^{-5})$\\
			\hline
			2&3 &$4.1\times 10^{-5}$ &$5.0\times 10^{-5}$ &$5.4\times 10^{-5}$ &$5.08\times 10^{-5}$ &$5.10\times 10^{-5}$\\
			&& $(4\times 10^{-6})$ & $(5\times 10^{-6})$ & $(4\times 10^{-6})$ &$(2\times 10^{-7})$ & $(3\times 10^{-7})$ \\
			\hline
			3&2 &$1.1\times 10^{-5}$ &$1.1\times 10^{-5}$ &$1.2\times 10^{-5}$ &$1.455\times 10^{-5}$ &$1.458\times 10^{-5}$\\
			&& $(3\times10^{-6})$ & $(3\times10^{-6})$ & $(4\times10^{-6})$ & $(1\times 10^{-7})$& $(9\times10^{-8})$ \\
			\hline
			3&3 &$7\times 10^{-6}$ &$6\times 10^{-6}$ &$6\times 10^{-6}$& $5.47\times 10^{-6}$ &$5.42\times 10^{-6}$\\
			&& $(2\times 10^{-6})$ & $(2\times 10^{-6})$ & $(2\times 10^{-6})$ & $(9\times 10^{-8})$& $(5\times 10^{-8})$\\
			\hline
			\hline
		\end{tabular}
		\caption{The probability of the maxima for a standardized OU-process crossing the barrier levels $b_1$ and $b_2$ in the two consecutive time intervals, respectively. The number in the bracket is the absolute Monte Carlo or quadrature error. The three sets of direct Monte Carlo results are implemented with $2,000,000$ sample paths. The number of time steps for the three sets of direct Monte Carlo results are $500$, $1,000$ and $2,000$, respectively. The quadrature scheme is implemented between the state domain $[-5, 5]$ with state increment $0.005$. The Monte Carlo integration method is implemented with $100,000$ sample paths.}\label{Tab1}
	\end{table}
}
We can also observe from Table \ref{Tab1} that in the direct Monte Carlo cases, the error cannot be improved by introducing a finer time-discretisation, see Lemma \ref{DMC_error} in Appendix \ref{DMC_analysis}. We also compare the computational times needed to obtain the results in Table \ref{Tab1}. In Figure \ref{Fig1}, we observe that the quadrature and Monte Carlo integration methods are more efficient than the direct Monte Carlo scheme. It turns out that the quadrature scheme performs best. 

\begin{figure}[H]
	\centering
	\includegraphics[scale=0.2]{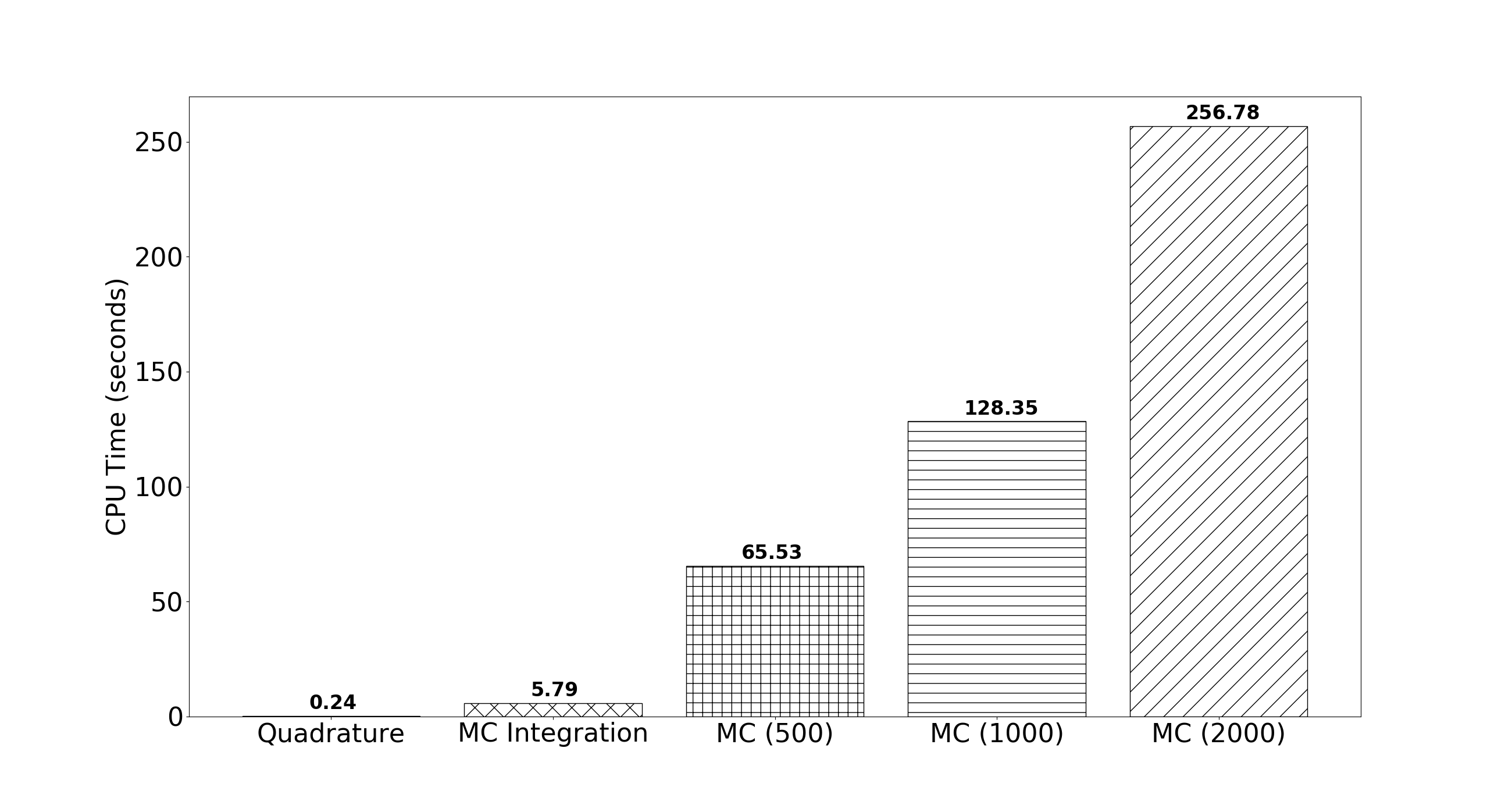}
	\caption{CPU time consumption for the case that $b_1 = b_2=2$, with prescribed maximum absolute error of $2\sim 4 \times 10^{-5}$ for all schemes for the computation of the multiple crossing probability in the two consecutive time periods. }\label{Fig1}
\end{figure}

Our second example is to fix the barrier and increase the number of intervals, which leads to lower joint passage probabilities. In Table \ref{Tab2}, we observe that the numerical results obtained by the direct Monte Carlo methods tend to be less accurate when the number of intervals increases. However, the quadrature and Monte Carlo integration methods remain reliable compared with the direct Monte Carlo results. 

\begin{table}[H]
	\centering
	\begin{tabular}{c|c|c|c|c|c}
		N & MC (500) & MC (1000) & MC (2000) & Quadrature & MC integration \\
		\hline
		\hline
		2 &$2.35\times 10^{-3}$ &$2.50\times 10^{-3}$&$2.62\times 10^{-3}$ &$2.72\times 10^{-3}$ &$2.74\times 10^{-3}$\\
		& $(5\times 10^{-5})$ & $(4\times 10^{-5})$ & $(4\times 10^{-5})$ & $(2\times 10^{-5})$& $(3\times 10^{-5})$\\
		\hline
		3 &$3.1\times 10^{-4}$ &$3.3\times 10^{-4}$ &$3.3\times 10^{-4}$ & $3.23\times 10^{-4}$ & $3.29\times 10^{-4}$ \\
		& $(1\times 10^{-5})$ & $(1\times 10^{-5})$ & $(1\times 10^{-5})$ &$(1\times 10^{-6})$ & $(5\times 10^{-6})$\\
		\hline
		4 &$5.5\times 10^{-5}$ &$5.7\times 10^{-5}$ &$5.6\times 10^{-5}$ & $5.23\times 10^{-5}$ & $5.30\times 10^{-5}$ \\
		& $(4\times 10^{-6})$ & $(7\times 10^{-6})$ & $(8\times 10^{-6})$ &$(3\times 10^{-7})$ & $(8\times 10^{-7})$\\
		\hline
		5 &$8\times 10^{-6}$&$7\times 10^{-6}$ &$8\times 10^{-6}$ & $8.06\times 10^{-6}$ & $8.07\times 10^{-6}$ \\
		& $(2\times 10^{-6})$ & $(2\times 10^{-6})$ & $(2\times 10^{-6})$ & $(6\times 10^{-8})$& $(9\times 10^{-8})$\\
		\hline
		\hline
	\end{tabular}
	\caption{Probability of the maxima of a standardized OU-process to be above the barrier level $b = 2$ in $N$ consecutive intervals. The number below is the absolute Monte Carlo or quadrature error. The three sets of direct Monte Carlo results are implemented with $2,000,000$ sample paths. The number of time steps for the three sets of direct Monte Carlo results are $500$, $1,000$ and $2,000$, respectively. The quadrature scheme is implemented between the state domain $[-5, 5]$ with state increment $0.005$. The Monte Carlo integration method is implemented with $100,000$ sample paths.}\label{Tab2}
\end{table}
We deduce from Figure \ref{Fig2} that the time needed when using the direct Monte Carlo method increases linearly with respect to the number of consecutive intervals considered. On the other hand, there is a small jump in the time consumption for the quadrature scheme and the Monte Carlo integration method. This is because when only two intervals are considered, the matrix $K_i$ or $\Omega_i$ in Proposition \ref{Quad} and \ref{MCint} is not necessary. Once the matrix $K_i$ or $\Omega_i$ is obtained, it is saved for further computations. This shows that the computational demand of the quadrature scheme and the Monte Carlo integration method remain essentially unchanged when considering three or more consecutive intervals. The probability in the case of a large number of intervals can be evaluated more efficiently by the quadrature and Monte Carlo integration methods. 
\begin{figure}[H]
	\begin{center}
	\includegraphics[scale=0.2]{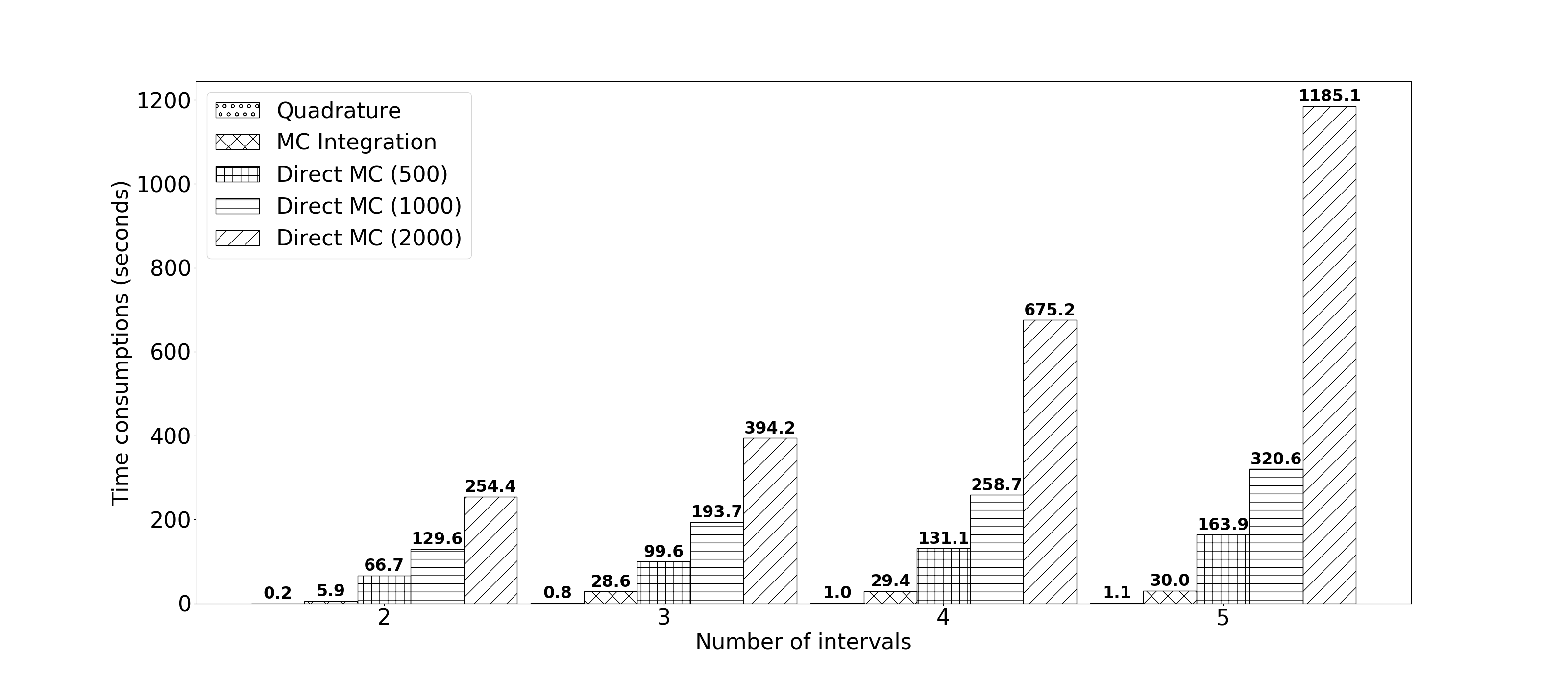}
	\caption{Time consumption of different schemes for an increasing number of intervals considered in Table \ref{Tab2}. }\label{Fig2}
	\end{center}
\end{figure}
\section{Conclusions}
We consider the multiple barrier-crossing problem of an Ornstein-Uhlenbeck (OU) process in consecutive periods of time and  focus on deriving the joint distribution and survival functions of the maxima in fixed---though arbitrary---subsequent time periods. To our knowledge, this is the first time this
mathematical problem has been formulated and tackled, while developing the needed mathematical theory, at the same time. We next summarise the main theoretical and methodological results, along with the outcomes and insights of our analysis obtained on the way:
\begin{itemize}
	\item[a)] We provide a generalization for the known infinite-series representation of the first-passage time (FPT) of a homogeneous OU-process. The extension is obtained by moving the lower-reflection boundary to minus infinity, which relaxes a condition thus far used in the literature to derive the analytical expression for the FPT. In doing so, we also provide an alternative proof for the FPT result.
	
	\item[b)] We produce the analysis of the truncation error of the generalized infinite-series representation for the OU-FPT. One is thus in the position to decide how many terms of the infinite series are necessary in order to achieve a required precision level of the first-passage probability.
	
	\item[c)] We study the tail-behaviour of the FPT survival function and conclude that it is light-tailed.
	
	\item[d)] We provide, what one might term, the {\it FPT transformation (or equivalence) theorem}: the problem of a time-inhomogeneous OU-process crossing a time-varying barrier is transformed to the equivalent problem of a homogeneous (or standardized) OU process crossing a different time-varying barrier.
	
	\item[e)] For the problem of an inhomogeneous OU-process crossing a time-varying barrier, we produce a detailed analysis that compares the errors between (i) a piece-wise constant approximation applied directly to the time-dependent parameters of the inhomogeneous OU-process and the time-varying barrier and (ii) a piece-wise constant approximation of the time-varying barrier after the ``inhomogeneous FPT-problem'' is transformed to the ``standardized FPT-problem''. We provide a criterion useful for choosing between the two schemes.
	\item[f)] Given the FPT distribution function and the transition density function of a Markov process, we obtain a semi-analytical formula for the joint distribution and survival functions of the maxima of a continuous Markov process in consecutive time periods. 
	
	\item[g)] Given the transition density function and the FPT density function of a Markov process, we provide a semi-analytical formula for the FPT distribution function of a Markov bridge process.		
	
	\item[h)] By adding a (mild) condition, we provide a simplification theorem that reduces the nested integration, appearing in the semi-analytical formula for the joint distribution and survival functions of the maxima of a continuous Markov process in consecutive time periods, to a product of single integrals. 
	
	\item[i)] We provide two efficient and robust computational methods to compute the nested integration emerging in the semi-analytical formula for the joint distribution and survival functions of the maxima in consecutive time periods. The numerical results confirm efficiency and accuracy of the quadrature method, in particular. 
	
	\item[j)] We show that Monte Carlo methods, applied for the computation of the distribution and survival functions, a) underestimate the probability of the barrier-crossing event and b) increase the computational error when the number of sample paths is increased. This deficiencies are further exacerbated in the case that the (barrier-crossing) event is rare, just as it would be if, for example, the considered event were a heatwave.
\end{itemize}
%%%%%%%%%%%%%%%%%%%%%%%%%%%%%%%%%%%%
\section*{Acknowledgements}
The authors are grateful to participants of the 6th International Conference of Mathematics in Finance, Kruger National Park, South Africa (August 2017), the London-Paris Bachelier Workshop on Mathematical Finance, University College London, U. K. (September 2017), the Fourth Young Researchers Meeting on BSDEs, Nonlinear Expectations and Mathematical Finance, Shanghai Jiaotong University, Shanghai, China (April 2018), the EMAp Research Seminar, Funda\c c\~ao Getulio Vargas, Rio de Janeiro (August 2018), and of the Seminar of the Department of Mathematical Sciences, University of Copenhagen (May 2019) for comments and suggestions. The authors thank Prof. Tomoko Matsui and the Institute of Statistical Mathematics in Tokyo as well as Prof. David Taylor and the African Institute for Financial Markets \& Risk Management (AIFMRM), University of Cape Town, for facilitating aspects of this research through presentations and research visits. The authors are thankful for the suggestions for improvements provided by anonymous reviewers. 

\section*{Appendix}
\appendix
\section{Algorithms}
\begin{algorithm}[H]
	\begin{algorithmic}[1]
		\State For a given initial value $x$ and barrier level $b$, compute all the $\alpha$-zeros in a given interval. 
		\State Take all $\alpha$-zeros to approximate the $\omega$ quantile of the hitting time, denoted $q_{\omega}$, by Theorem \ref{OUhit}. 
		\State Fix the relative error $\delta$. Obtain the error tolerance interval $[q_{\omega}(1-\delta), q_{\omega}(1+\delta)]$.
		\State Denote the approximation with $n$ ordered $\alpha$-zeros by $\hat{q}_{\omega}$. 
		\State Starting from $n=1$: 
		\While {$\hat{q}_{\omega} \notin [q_{\omega}(1-\delta), q_{\omega}(1+\delta)]$}
		\State $n += 1$.
		\EndWhile
		\State Output $n$. 
	\end{algorithmic}
	\caption{Truncation terms deviation by quantile.}\label{algor_color}
\end{algorithm}
Notation:
\begin{table}[H]
\begin{tabular}{lll}
&$NB$: &Number of sets \\
&$N$: &Number of paths for a given simulation set\\
&$M$: &Number of time steps per path\\
& $b_1$, $b_2$: &barrier level in consecutive intervals one and two\\
& $n$: &$n$-th path\\
& $m$: &$m$-th time step\\
& $\phi_m^{(n)}$:& Realisation of standard normal random variable for path 				$n$ at time step $m$\\
& $\delta = T / M$: &Length of time step\\
& $x_{m}^{(n)}$: &Realised OU-process value of path $n$ at time step $m$\\
& $\mathbb{I}^{(n)}$: &Indicator function\\
& $\textit{Prob}_{(\textit{nb})}$: &Probability of joint crossing in two consecutive intervals for set $nb$
\end{tabular}
\end{table}
\begin{algorithm}[H]
	\caption{Algorithm of Direct Monte Carlo}\label{DMC_normal}
	\begin{algorithmic}[1]
		\While {path $\textit{n}\leq \textit{N}$}
		\While {time step $\textit{m}\leq \textit{M}$}
		\State 1, simulate realization of standard normal random variable $\phi_m^{(n)}$
		\State 2, evaluate $x_{m+1}^{(n)} = x_{m}^{(n)} \e^{-\lambda \delta } + \mu (1 - \e^{-\lambda \delta }) + \sigma\sqrt{\frac{1 - \e^{-2\lambda \delta }}{2\lambda}} \phi_m^{(n)}$
		\EndWhile
		
		\State \If {$\max( x_{1}^{(n)}, \cdots, x_{M/2}^{(n)} ) \geq b_1$ \textbf{and} $\max( x_{M/2+1}^{(n)}, \cdots, x_{M}^{(n)} ) \geq b_2$}
		\State $\mathbb{I}^{(n)} = 1$
		\Else
		\State $\mathbb{I}^{(n)} = 0$
		\EndIf
		\EndWhile
		\State {$\textit{Final\_Prob} = \text{Mean}(\textit{Ind})$}
		\State {$\textit{Final\_Err} = \text{StD}(\textit{Ind}) / \sqrt{\textit{N}}$}
	\end{algorithmic}
\end{algorithm}
\begin{algorithm}[H]
	\caption{Algorithm of Direct Monte Carlo (low memory requirement)}\label{DMC}
	\begin{algorithmic}[1]
		\While {set $\textit{nb}\leq \textit{NB}$}
		\While {path $\textit{n}\leq \textit{N}$}
		\While {time step $\textit{m}\leq \textit{M}$}
		\State 1, simulate realization of standard normal random variable $\phi_m^{(n)}$
		\State 2, evaluate $x_{m+1}^{(n)} = x_{m}^{(n)} \e^{-\lambda \delta } + \mu (1 - \e^{-\lambda \delta }) + \sigma\sqrt{\frac{1 - \e^{-2\lambda \delta }}{2\lambda}} \phi_m^{(n)}$
		\EndWhile
		
		\State \If {$\max( x_{1}^{(n)}, \cdots, x_{M/2}^{(n)} ) \geq b_1$ \textbf{and} $\max( x_{M/2+1}^{(n)}, \cdots, x_{M}^{(n)} ) \geq b_2$}
		\State $\mathbb{I}^{(n)} = 1$
		\Else
		\State $\mathbb{I}^{(n)} = 0$
		\EndIf
		\EndWhile
		\State {$\textit{Prob}_{(\textit{nb})} = \text{Mean}({\textit{Ind}})$}
		\EndWhile
		\State {$\textit{Final\_Prob} = \text{Mean}(\textit{Prob})$}
		\State {$\textit{Final\_Err} = \text{StD}(\textit{Prob}) / \sqrt{\textit{NB}}$}
	\end{algorithmic}
\end{algorithm}
\section{Error analysis of the direct Monte Carlo method for FPT estimation}\label{DMC_analysis}
We estimate the probability 
$\mathbb{P}\left( \sup_{t\in(t_{0}, t_1]} X_t \geq b_1, \sup_{t\in(t_{1}, t_2]} X_t \geq b_2 \right)$
by the following algorithm:
\begin{itemize}
	\item[a)]
	We discretize the time interval $[t_0, t_2]$ into $M$ pieces: 
	$$t_0=t^{(0)} < t^{(1)} < \cdots < t^{(\frac{M}{2})} < t_1 = t^{(\frac{M}{2}+1)} < \cdots < t^{(M)} = t_2 . $$ 
	\item[b)]
	We estimate the maximum in each interval by
	\begin{align*}
	&\sup_{t\in(t_{0}, t_1]} X_t \approx \max\left\{ X_{t^{(0)}}, X_{t^{(1)}}, \cdots, X_{t^{(\frac{M}{2})}} \right\},&\\
	&\sup_{t\in(t_{1}, t_2]} X_t \approx \max\left\{ X_{t^{(\frac{M}{2}+1)}}, X_{t^{(\frac{M}{2}+2)}}, \cdots, X_{t^{({M})}} \right\}.&
	\end{align*}
	\item[c)]
	We approximate $\mathbb{P}\left( \sup_{t\in(t_{0}, t_1]} X_t \geq b_1, \sup_{t\in(t_{1}, t_2]} X_t \geq b_2 \right)$ with
	\begin{align*}
	&\mathbb{P}\left( \max\left\{ X_{t^{(1)}}, X_{t^{(2)}}, \cdots, X_{t^{(\frac{M}{2})}} \right\} \geq b_1, \max\left\{ X_{t^{(\frac{M}{2}+1)}}, X_{t^{(\frac{M}{2}+2)}}, \cdots, X_{t^{({M})}} \right\} \geq b_2 \right). 
	\end{align*}
\end{itemize}

\begin{lemma}\label{DMC_underestimate}
	The direct Monte Carlo algorithm (a)-(c) underestimates the actual probability due to the time-discretization, that is
	\begin{align*}
	& \mathbb{P}\left( \sup_{t\in(t_{0}, t_1]} X_t \geq b_1, \sup_{t\in(t_{1}, t_2]} X_t \geq b_2 \right) \\
	&\geq  \mathbb{P}\left( \max\left\{ X_{t^{(1)}}, X_{t^{(2)}}, \cdots, X_{t^{(\frac{M}{2})}} \right\} \geq b_1, \max\left\{ X_{t^{(\frac{M}{2}+1)}}, X_{t^{(\frac{M}{2}+2)}}, \cdots, X_{t^{({M})}} \right\} \geq b_2 \right). 
	\end{align*}
\end{lemma}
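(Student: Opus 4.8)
The plan is to exploit the elementary fact that a maximum taken over a finite set of time points can never exceed the supremum taken over the full continuous interval containing those points. Since the discretization points $t^{(1)}, \ldots, t^{(M/2)}$ all lie in $(t_0, t_1]$ and $t^{(M/2+1)}, \ldots, t^{(M)}$ all lie in $(t_1, t_2]$, one has pathwise, for every sample outcome $\omega \in \Omega$,
$$\max\left\{ X_{t^{(1)}}, \ldots, X_{t^{(M/2)}} \right\} \leq \sup_{t\in(t_0,t_1]} X_t, \qquad \max\left\{ X_{t^{(M/2+1)}}, \ldots, X_{t^{(M)}} \right\} \leq \sup_{t\in(t_1,t_2]} X_t.$$

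First I would translate these pathwise inequalities into inclusions of events. Because the discrete maximum is dominated by the continuous supremum, the event that the discrete maximum in the first interval reaches $b_1$ is contained in the event that the continuous supremum reaches $b_1$, and likewise for the second interval. Intersecting the two inclusions yields
$$\left\{ \max\{X_{t^{(1)}}, \ldots, X_{t^{(M/2)}}\} \geq b_1 \right\} \cap \left\{ \max\{X_{t^{(M/2+1)}}, \ldots, X_{t^{(M)}}\} \geq b_2 \right\} \subseteq \left\{ \sup_{t\in(t_0,t_1]} X_t \geq b_1 \right\} \cap \left\{ \sup_{t\in(t_1,t_2]} X_t \geq b_2 \right\}.$$

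Finally I would invoke the monotonicity of the probability measure $\mathbb{P}$ with respect to set inclusion: since the discretized joint event is a subset of the true joint event, its probability is no larger, which is precisely the claimed inequality. There is no genuine obstacle here; the only point meriting a word of care is that the suprema are well-defined measurable random variables, which follows from the almost-sure continuity of the OU sample paths, so that each supremum over the uncountable interval coincides with the limit of maxima over refining finite partitions and is therefore a bona fide random variable to which $\mathbb{P}$ may be applied.
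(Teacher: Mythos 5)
Your proposal is correct and follows essentially the same route as the paper's own proof: the discretized joint event is contained in the continuous-supremum joint event, and monotonicity of $\mathbb{P}$ gives the inequality. The additional remark on measurability of the suprema via path continuity is a harmless (and reasonable) elaboration that the paper leaves implicit.
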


\begin{proof}
	
	We observe that
	\begin{align*}
	&\left\{\max\left\{ X_{t^{(1)}}, X_{t^{(2)}}, \cdots, X_{t^{(\frac{M}{2})}} \right\} \geq b_1,  \max\left\{ X_{t^{(\frac{M}{2}+1)}}, X_{t^{(\frac{M}{2}+2)}}, \cdots, X_{t^{({M})}} \right\}  \geq b_2 \right\}\\
	&\subseteq \left\{\sup_{t\in(t_{0}, t_1]} X_t \geq b_1, \sup_{t\in(t_{1}, t_2]} X_t \geq b_2 \right\}.
	\end{align*}
	Since the probability of a sub-event is smaller than that of the event itself, we have
	\begin{align*}
	& \mathbb{P}\left( \sup_{t\in(t_{0}, t_1]} X_t \geq b_1, \sup_{t\in(t_{1}, t_2]} X_t \geq b_2 \right) \\
	&\geq\mathbb{P}\left( \max\left\{ X_{t^{(1)}}, X_{t^{(2)}}, \cdots, X_{t^{(\frac{M}{2})}} \right\} \geq b_1, \max\left\{ X_{t^{(\frac{M}{2}+1)}}, X_{t^{(\frac{M}{2}+2)}}, \cdots, X_{t^{({M})}} \right\} \geq b_2 \right). 
	\end{align*}
\end{proof}
\noindent The larger $M$, the more time steps, and the approximated probability tends to be closer to the actual probability of the event (that cannot be accurately estimated by direct Monte Carlo). For example, let us consider $M = 500, 1000, 2000$ partitions in the time interval $[0,2]$. We then have:
\begin{align*}
& \mathbb{P}\big( \sup_{t\in(0, 1]} X_t \geq b_1, \sup_{t\in(1, 2]} X_t \geq b_2 \big) \\
&\geq \mathbb{P}\left( \max\left\{ X_{1/1000}, X_{2/1000}, \cdots, X_{1000/1000} \right\} \geq b_1, \max\left\{ X_{1001/1000}, X_{1002/1000}, \cdots, X_{2000/1000} \right\} \geq b_2 \right)\\
&\geq  \mathbb{P}\left( \max\left\{ X_{2/1000}, X_{4/1000}, \cdots, X_{1000/1000} \right\} \geq b_1,\max\left\{ X_{1002/1000}, X_{1004/1000}, \cdots, X_{2000/1000} \right\} \geq b_2 \right)\\
&= \mathbb{P}\left( \max\left\{ X_{1/500}, X_{2/500}, \cdots, X_{500/500} \right\} \geq b_1,\max\left\{ X_{501/500}, X_{502/500}, \cdots, X_{1000/500} \right\} \geq b_2 \right)\\
&\geq \mathbb{P}\left( \max\left\{ X_{2/500}, X_{4/500}, \cdots, X_{500/500} \right\} \geq b_1,\max\left\{ X_{502/500}, X_{504/500}, \cdots, X_{1000/500} \right\} \geq b_2 \right)\\
&= \mathbb{P}\left( \max\left\{ X_{1/250}, X_{2/250}, \cdots, X_{250/250} \right\} \geq b_1,\max\left\{ X_{251/250}, X_{252/250}, \cdots, X_{500/250} \right\} \geq b_2 \right).
\end{align*}
Therefore,
$\text{prob}_{\text{MC(500)}} \leq \text{prob}_{\text{MC(1000)}} \leq \text{prob}_{\text{MC(2000)}} \leq \text{prob}_{\text{actual}}$. We next address the Monte Carlo error of the algorithm (a)-(c).
\begin{lemma}\label{DMC_error}
	For a fixed number of paths, the errors of the direct Monte Carlo algorithm (a)-(c), based on a discretization with $M_1$ and $M_2$ time steps ($M_1 < M_2$), satisfy the relations
	\begin{align*}
	\textit{Err}_{\text{MC}(M_1)} \geq  \textit{Err}_{\text{MC}(M_2)} \quad \text{ if and only if } \quad \text{prob}_{\text{MC}(M_1)} + \text{prob}_{\text{MC}(M_2)}\geq  1,\\
	\textit{Err}_{\text{MC}(M_1)} \leq  \textit{Err}_{\text{MC}(M_2)} \quad \text{ if and only if } \quad \text{prob}_{\text{MC}(M_1)} + \text{prob}_{\text{MC}(M_2)}\leq  1. 
	\end{align*}
\end{lemma}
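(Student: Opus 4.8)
The plan is to reduce the comparison of the two Monte Carlo errors to a comparison of Bernoulli variances. For a fixed time-discretization with $M$ steps, the indicator $\mathbb{I}^{(n)}$ produced in Algorithm \ref{DMC_normal} is a Bernoulli random variable whose success probability equals the discretized crossing probability $p_M := \text{prob}_{\text{MC}(M)}$. Since the reported error is $\textit{Err}_{\text{MC}(M)} = \text{StD}(\mathbb{I})/\sqrt{N}$ and a Bernoulli$(p_M)$ variate has variance $p_M(1-p_M)$, I would first record the identity
\begin{align*}
\textit{Err}_{\text{MC}(M)}^2 = \frac{p_M(1-p_M)}{N}.
\end{align*}
Because the number of paths $N$ is held fixed across the two schemes, comparing $\textit{Err}_{\text{MC}(M_1)}$ with $\textit{Err}_{\text{MC}(M_2)}$ is equivalent to comparing $p_{M_1}(1-p_{M_1})$ with $p_{M_2}(1-p_{M_2})$, and the sign of the difference of errors coincides with the sign of the difference of these two products.

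The decisive step is the elementary factorization
\begin{align*}
p_{M_1}(1-p_{M_1}) - p_{M_2}(1-p_{M_2}) = (p_{M_1} - p_{M_2})\,(1 - p_{M_1} - p_{M_2}),
\end{align*}
obtained by writing $p(1-p)=p-p^2$ and grouping the linear and quadratic parts as a difference of squares. The sign of the error gap is thus governed by the product of the two factors $p_{M_1}-p_{M_2}$ and $1-p_{M_1}-p_{M_2}$.

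To pin down the first factor I would invoke Lemma \ref{DMC_underestimate}, together with the nested-grid chain of inequalities displayed immediately after it, which show that refining the time grid can only enlarge the discretized crossing probability. Hence $M_1 < M_2$ gives $p_{M_1} \leq p_{M_2}$, so the first factor $p_{M_1}-p_{M_2}$ is non-positive. Consequently the sign of $\textit{Err}_{\text{MC}(M_1)}^2 - \textit{Err}_{\text{MC}(M_2)}^2$ is the \emph{opposite} of the sign of $1-p_{M_1}-p_{M_2}$: the condition $p_{M_1}+p_{M_2}\geq 1$ makes $1-p_{M_1}-p_{M_2}\leq 0$, whence the product is non-negative and $\textit{Err}_{\text{MC}(M_1)}\geq \textit{Err}_{\text{MC}(M_2)}$, while $p_{M_1}+p_{M_2}\leq 1$ yields the reverse. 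This proves both stated equivalences.

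The only point demanding care is that I need genuine equivalences, not merely one-sided implications, so the reverse direction relies on the strict monotonicity $p_{M_1} < p_{M_2}$; this holds in the non-degenerate situation, since for a continuous process a coarser grid misses some over-barrier excursion with positive probability, making the first factor strictly negative and allowing its sign to be cancelled. I expect this strictness observation to be the subtle part, whereas the remainder is a routine factorization of a quadratic followed by a two-case sign analysis.
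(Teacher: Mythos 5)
Your proposal is correct and follows essentially the same route as the paper: both reduce the comparison to the Bernoulli variances $p_M(1-p_M)$, invoke Lemma \ref{DMC_underestimate} to get $p_{M_1}\le p_{M_2}$, and read off the sign from the factor $1-p_{M_1}-p_{M_2}$ (the paper writes the ratio of errors as $\sqrt{1+\tfrac{a}{p(1-p)}(p_{M_1}+p_{M_2}-1)}$, which is your factorization in disguise). Your closing remark about needing strict inequality $p_{M_1}<p_{M_2}$ for the reverse implications is a fair point that the paper's own proof glosses over.
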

\begin{proof}
	For convenience, we write
	\begin{align*}
	&A:=\left\{\max\left\{ X_{t^{(1)}}, X_{t^{(2)}}, \cdots, X_{t^{(\frac{M_1}{2})}} \right\} \geq b_1,  \max\left\{ X_{t^{(\frac{M_1}{2}+1)}}, X_{t^{(\frac{M_2}{2}+2)}}, \cdots, X_{t^{({M_1})}} \right\}  \geq b_2\right\},\\
	&B:=\left\{\max\left\{ X_{t^{(1)}}, X_{t^{(2)}}, \cdots, X_{t^{(\frac{M_2}{2})}} \right\} \geq b_1,  \max\left\{ X_{t^{(\frac{M_2}{2}+1)}}, X_{t^{(\frac{M_2}{2}+2)}}, \cdots, X_{t^{({M_2})}} \right\}  \geq b_2\right\}.
	\end{align*}
	Then the Monte Carlo algorithm (a)-(c) computes
	$
	\text{prob}_{\text{MC}(M_1)} = \mathbb{P}(A) = \mathbb{E}[ \mathbb{1}_{A}]
	$
	for $M_1$ time steps, and 
	$
	\text{prob}_{\text{MC}(M_2)} = \mathbb{P}(B) = \mathbb{E}[ \mathbb{1}_{B}]
	$
	for $M_2$ time steps. If we implement the Monte Carlo algorithm (a)-(c) to compute $\mathbb{E}\left[ \mathbb{1}_{A} \right]$ and $\mathbb{E}\left[ \mathbb{1}_{B} \right]$, the ratio between the resulting errors is equal to the ratio between the standard deviations of $\mathbb{1}_{A}$ and $\mathbb{1}_{B}$. That is:
	\begin{align*}
	\frac{\textit{Err}({\mathbb{1}_{A}})}{\textit{Err}({\mathbb{1}_{B}})} =& \frac{\textit{StD}({\mathbb{1}_{A}})}{\textit{StD}({\mathbb{1}_{B}})} = \sqrt{\frac{\textit{Var}({\mathbb{1}_{A}})}{\textit{Var}({\mathbb{1}_{B}})}} = \sqrt{\frac{ \mathbb{E}\left[ \left( \mathbb{1}_{A} \right)^2 \right] - \mathbb{E}\left[ \mathbb{1}_{A} \right]^2 }{ \mathbb{E}\left[ \left( \mathbb{1}_{B} \right)^2 \right] - \mathbb{E}\left[ \mathbb{1}_{B} \right]^2 }}.
	\end{align*}
	We observe that $\left(\mathbb{1}_{A}\right)^2 = \mathbb{1}_{A}$ and
	$\left(\mathbb{1}_{B}\right)^2 = \mathbb{1}_{B}$. Therefore, 
	\begin{align*}
	\frac{\textit{Err}({\mathbb{1}_{A}})}{\textit{Err}({\mathbb{1}_{B}})} &= \sqrt{\frac{ \mathbb{E}\left[ \left( \mathbb{1}_{A} \right)^2 \right] - \mathbb{E}\left[ \mathbb{1}_{A} \right]^2 }{ \mathbb{E}\left[ \left( \mathbb{1}_{B} \right)^2 \right] - \mathbb{E}\left[ \mathbb{1}_{B} \right]^2 }} = \sqrt{\frac{ \mathbb{E}\left[ \mathbb{1}_{A} \right] - \mathbb{E}\left[ \mathbb{1}_{A} \right]^2 }{ \mathbb{E}\left[  \mathbb{1}_{B}  \right] - \mathbb{E}\left[ \mathbb{1}_{B} \right]^2 }}= \sqrt{\frac{ \mathbb{E}\left[ \mathbb{1}_{A} \right] \left(  1 - \mathbb{E}\left[ \mathbb{1}_{A} \right]\right) }{ \mathbb{E}\left[  \mathbb{1}_{B}  \right] \left( 1 - \mathbb{E}\left[ \mathbb{1}_{B} \right] \right) }} = \sqrt{ \frac{\mathbb{P}(A) \mathbb{P}(\bar A) }{\mathbb{P}(B) \mathbb{P}(\bar B) } }.
	\end{align*}
	This shows that
	\begin{align*}
	\textit{Err}_{\text{MC}(M_1)} = \sqrt{ \frac{ \text{prob}_{\text{MC}(M_1)} }{ \text{prob}_{\text{MC}(M_2)} } \frac{ 1 - \text{prob}_{\text{MC}(M_1)} }{ 1 - \text{prob}_{\text{MC}(M_2)} } }\ \textit{Err}_{\text{MC}(M_2)}.
	\end{align*}
	
	\noindent Since $M_1 < M_2$, we have
	$\text{prob}_{\text{MC}(M_1)} \leq \text{prob}_{\text{MC}(M_2)}$.	
	We denote $\text{prob}_{\text{MC}(M_2)}$ by $p\in(0, 1)$, then $\text{prob}_{\text{MC}(M_1)} = p - a$, for some $a\in [0, p]$. Therefore, 
	\begin{align*}
	\textit{Err}_{\text{MC}(M_1)} =& \sqrt{ \frac{ p-a }{ p } \cdot \frac{ 1 - (p-a) }{ 1 - p } }\ \textit{Err}_{\text{MC}(M_2)}\\
	=& \sqrt{ 1 + \frac{a}{p(1-p)}\left( \text{prob}_{\text{MC}(M_1)} + \text{prob}_{\text{MC}(M_2)} -1 \right) }\ \textit{Err}_{\text{MC}(M_2)}.
	\end{align*}
	Since $a\geq 0$ and $p\in[0, 1]$, 	
	$$
	\textit{Err}_{\text{MC}(M_1)}\geq \textit{Err}_{\text{MC}(M_2)} \quad \text{ if and only if } \quad \text{prob}_{\text{MC}(M_1)} + \text{prob}_{\text{MC}(M_2)}\geq 1, 
	$$ 
	$$
	\textit{Err}_{\text{MC}(M_1)}\leq \textit{Err}_{\text{MC}(M_2)} \quad \text{ if and only if } \quad \text{prob}_{\text{MC}(M_1)} + \text{prob}_{\text{MC}(M_2)}\leq 1. 
	$$ 
\end{proof}
\section{Proofs}\label{AppC}
\begin{proof}[{\bf Proof of Proposition 4.1}]
	Here we prove the case for positive $g^\prime(\lambda t)$,  $\mu^\prime(t)$, $g^{\prime\prime}(\lambda t)$ and $\mu^{\prime\prime}(t)$, since the other case can be shown in the same way. For $\sigma(t) = \sigma$, $\lambda(t) = \lambda$ and $b(t) = b$, we can solve the ODE system (\ref{ode}), and we obtain
	\begin{align*}
	\alpha(t) = \frac{\sqrt{\lambda}}{\sigma}, \qquad \beta(t) = \beta_0 \e^{-t}\left[ 1 + \frac{1}{\sigma\sqrt{\lambda}} \int_{0}^{t} \e^{s} \mu\left( \frac{s}{\lambda} \right) \rd s \right], \qquad \gamma(t) = \frac{t}{\lambda},
	\end{align*}
	which are defined for $t \in [\lambda t_1, \lambda t_2]$.	Since $\mu(t) \in {C}^2\left([t_1, t_2]\right)$, we have
	$$g(t) = \frac{\sqrt{\lambda}}{\sigma}b - \beta(t) \in {C}^2\left([t_1, t_2]\right).$$
	Therefore,
	\begin{align*}
	\frac{\inf\limits_{t \in [\lambda t_1, \lambda t_2]}  g^{\prime\prime}(t) }{\inf\limits_{t \in [t_1, t_2]}  \mu^{\prime\prime}(t) }
	&= \frac{\inf\limits_{t \in [\lambda t_1, \lambda t_2]}  -\left(  \mu(\frac{t}{\lambda}) \frac{\beta_0}{\sigma\sqrt{\lambda}} - \beta(t)  \right)^{\prime} }{\inf\limits_{t \in [t_1, t_2]}  \mu^{\prime\prime}(t) } \\
	&= \frac{\inf\limits_{t \in [\lambda t_1, \lambda t_2]}  \mu(\frac{t}{\lambda}) \frac{\beta_0}{\sigma\sqrt{\lambda}} - \beta_0 \e^{-t}\left[ 1 + \frac{1}{\sigma\sqrt{\lambda}} \int_{0}^{t} \e^{s} \mu\left( \frac{s}{\lambda} \right) \rd s \right]  - \mu^\prime(\frac{t}{\lambda}) \frac{\beta_0}{\sigma\lambda\sqrt{\lambda}}  }{\inf\limits_{t \in [t_1, t_2]}  \mu^{\prime\prime}(t) }  \\
	&= \frac{\inf\limits_{t \in [\lambda t_1, \lambda t_2]}   \frac{\beta_0}{\sigma\sqrt{\lambda}} \left[ \mu(\frac{t}{\lambda})    -  \sigma\sqrt{\lambda} \e^{-t} - \int_{0}^{t} \e^{s-t} \mu\left( \frac{s}{\lambda} \right) \rd s   - \mu^\prime(\frac{t}{\lambda}) \frac{1}{\lambda}  \right] }{\inf\limits_{t \in [t_1, t_2]}  \mu^{\prime\prime}(t) }.
	\end{align*}
	Since $g(t)$ and $\mu(t)$ are monotone increasing and convex, if 
	$$\frac{\inf\limits_{t \in [\lambda t_1, \lambda t_2]} {\beta_0} \left[ \mu(\frac{t}{\lambda})    -  \sigma\sqrt{\lambda} \e^{-t} - \int_{0}^{t} \e^{s-t} \mu\left( \frac{s}{\lambda} \right) \rd s   - \mu^\prime(\frac{t}{\lambda}) \frac{1}{\lambda}  \right] }{\inf\limits_{t \in [t_1, t_2]}  \mu^{\prime\prime}(t) } < \sigma\sqrt{\lambda},$$
	we have
	$$\frac{\inf\limits_{t \in [\lambda t_1, \lambda t_2]}  g^{\prime\prime}(t) }{\inf\limits_{t \in [t_1, t_2]}  \mu^{\prime\prime}(t) } < 1.$$
	Hence $g'(t)<\mu'(t)$ for $t \in [t_1, t_2]$. This means that for a fixed level of accuracy, the transformed barrier $g(t)$ can be approximated by a piece-wise constant function with fewer segments than for $\mu(t)$, i.e. $g(t)$ can be approximated more efficiently with the transformation method.
\end{proof}
\begin{proof}[{\bf Proof of Proposition 4.2}]
	We first consider the case that $z,z^\prime < b$. Since the two events $\left\{ M_{0,T} \geq b \right\}$ and $\left\{ \tau_{Z,b} \leq T \right\}$ are equivalent, 
	\begin{align*}
	\mathbb{P}\left( M_{0,T} \geq b\, | \,  Z_T=z^\prime, Z_0 = z \right) = \mathbb{P}\left( \tau_{Z,b} \leq T\, | \,  Z_T=z^\prime, Z_0 = z \right)
	= \int_{0}^{T} f_{\tau_{Z,b}}\left( t\, | \,  Z_T=z^\prime, Z_0 = z \right) \rd t,
	\end{align*}
	where $f_{\tau_{Z,b}}\left( t\, | \,  Z_T=z^\prime, Z_0 = z \right)$ denotes the conditional density function of the first-passage-time $\tau_{Z,b}$. By the Bayes theorem, we have
	\begin{align*}
	\mathbb{P}\left( M_{0,T} \geq b\, | \,  Z_T=z^\prime, Z_0 = z \right)
	=& \int_{0}^{T} p\left( 0,T,z,z^\prime\, | \,  \tau_{Z,b} = t \right) \frac{
		f_{\tau_{Z,b}}(t;z)	}{p(0, T, z, z^\prime)} \rd t.
	\end{align*}
	Here, $p\left( 0,T,z,z^\prime\, | \,  \tau_{Z,b} = t \right)$ denotes the conditional transition density of $(Z_t)_{t \geq 0} $ from state $b$ at time $t$ to state $z^\prime$ at time $T$. Since $(Z_t)$ is a continuous process, we have $\left\{ \tau_{Z,b} = t \right\} \cap \left\{ Z_t \geq b \right\} = \left\{ \tau_{Z,b} = t \right\}$.
	Hence,
	$$p\left( 0,T,z,z^\prime\, | \,  \tau_{Z,b} = t \right) = p\left( 0,T,z,z^\prime\, | \,  \tau_{Z,b} = t, Z_t = b \right).$$
	Because $\left\{ \tau_{Z,b} = t \right\} \in \sigma\left( Z_s:  0 \leq s \leq t \right)$ and $\left\{ Z_T \geq z^\prime \right\} \in \sigma\left( Z_s: 
	t< s \leq T
	\right)$, by Lemma \ref{markov_ind}, we have 
	$$p\left( 0,T,z,z^\prime\, | \,  \tau_{Z,b} = t, Z_t = b \right) = p\left( 0,T,z,z^\prime\, | \,  Z_t = b \right) = p\left( t,T,b,z^\prime \right).$$
	Therefore,
	\begin{align*}
	\mathbb{P}\left( M_{0,T} \geq b\, | \,  Z_T=z^\prime, Z_0 = z \right) =& \int_{0}^{T}  \frac{p(t, T, b, z^\prime)}{p(0, T, z, z^\prime)} 
	f_{\tau_{Z,b}}(t;z) \rd t.
	\end{align*}
	In the case that either $z \geq b$ or $z^\prime \geq b$, the probability turns out to be equal to one due to the continuous property of the process $(Z_t)_{t\geq 0}$. 
\end{proof}
\begin{proof}[{\bf Proof of Corollary 5.1}]
	By Theorem \ref{main_thm}, we have\\
	\begin{align*}
	&\mathbb{P}\left( M_{t_{0},t_1} \geq b_1, \cdots, M_{t_{N-1},t_N} \geq b_N \, | \,  Z_{t_0} = z_0 \right) \\
	&= \int_{\mathbb{R}} \mathbb{P}\left( M_{t_{0},t_1} \geq b_1 \, | \,  Z_{t_{0}} = z_{0}, Z_{t_1}=z_1 \right) p(t_{0}, t_1, z_{0}, z_1) \cdots \\
	&\hspace{4cm} \times  \int_{\mathbb{R}} \mathbb{P}\left( M_{t_{N-2},t_{N-1}} \geq b_{N-1} \, | \,  Z_{t_{N-2}} = z_{N-2}, Z_{t_{N-1}}=z_{N-1} \right) \\
	&\Hquad
	\times p(t_{N-2}, t_{N-1}, z_{N-2}, z_{N-1})\left( 1-\mathbb{P}\left( M_{t_{N-1},t_N} < b_N \, | \,  Z_{t_{N-1}} = z_{N-1} \right) \right) \rd z_{N-1} \cdots \rd z_1,
	\end{align*}
	where $p(t_{i-1}, t_i,z_{i-1},z_i)$ is the transition density function of the process $(Z_t)_{t \geq 0}$. By Theorem \ref{OUhit} we have 
	\begin{align*}
	1-\mathbb{P}\left( M_{t_{N-1},t_N} < b_N \, | \,  Z_{t_{N-1}} = z_{N-1} \right)= \left[1 - \sum\limits_{k = 1}^{\infty} c_k^{(N)} \e^{-\alpha_k^{(N)} \Delta t} \mathscr{H}_{\alpha_k^{(N)}}\left( -z_{N-1} \right) \right] \mathbb{1}\left({z_{N-1} < b_N}\right) + \mathbb{1}\left({z_{N-1} \geq b_N}\right).
	\end{align*}
	By the homogeneous property of the standardized OU-process and Proposition \ref{prop_bridge}, we have 
	\begin{align*}
	&\mathbb{P}\left( M_{t_{i-1},t_{i}} \geq b_i \, | \,  Z_{t_{i-1}}=z_{i-1}, Z_{t_i} = z_i \right) p(t_{i-1}, t_i, z_{i-1}, z_i) \\
	&= \mathbb{P}\left( M_{0, \Delta t} \geq b_i \, | \,  Z_{0}=z_{i-1}, Z_{\Delta t} = z_i \right) p(0, \Delta t, z_{i-1}, z_i)\\
	&= \int_{0}^{\Delta t} p(t, \Delta t, b_i, z_i) {\mathbb{P}\left( \tau_{Z,b_i} \in \rd  t \, | \,  Z_{0} = z_{i-1} \right)} \mathbb{1}\left({z_{i-1} < b_i}\right)\mathbb{1}\left({ z_i < b_i}\right) \\
	&\quad+ {p(0, \Delta t, z_{i-1}, z_i)} \left( 1-\mathbb{1}\left({z_{i-1} < b_i}\right)\mathbb{1}\left({ z_i < b_i}\right) \right)
	\end{align*}
	and further	
	\begin{align*}
	&\mathbb{P}\left( M_{t_{i-1},t_{i}} \geq b_i \, | \,  Z_{t_{i-1}}=z_{i-1}, Z_{t_i} = z_i \right) p(t_{i-1}, t_i, z_{i-1}, z_i) \\
	&= \sum_{k=1}^{\infty}  {c_k^{(i)} \alpha_k^{(i)} \mathscr{H}_{\alpha_k^{(i)}}( -z_{N-1}) }
	\int_{\e^{-\Delta t}}^{1} \frac{x^{-\alpha_k^{(i)} - 1}\mathbb{1}\left({z_{i-1} < b_i}\right)\mathbb{1}\left({ z_i < b_i}\right)}{\sqrt{\pi(1-x^2)} }\exp\left\{ -\frac{(z_i - b_i x)^2}{1-x^2}- \alpha_k^{(i)} \Delta t \right\}  {\rd x}\\
	&\hspace{8cm}+ p(0, \Delta t, z_{i-1}, z_{i}) \left[ 1 - \mathbb{1}\left({z_{i-1} < b_i}\right)\mathbb{1}\left({ z_i < b_i}\right) \right].
	\end{align*}
	Therefore, 
	\begin{align*}
	&\mathbb{P}\left( M_{t_{0},t_1} \geq b_1, \cdots, M_{t_{N-1},t_N} \geq b_N \, | \,  Z_{t_0} = z_0 \right)\notag\\ 
	&= \int_{\mathbb{R}} \kappa\left(z_{0},  z_{1}, b_1 \right) \cdots \int_{\mathbb{R}} \kappa\left( z_{N-3},  z_{N-2}, b_{N-2} \right)\int_{\mathbb{R}} \kappa\left( z_{N-2},  z_{N-1}, b_{N-1} \right)\bar{q}\left( z_{N-1}, b_N \right) \rd z_{N-1} \rd z_{N-2} \cdots \rd z_1.
	\end{align*}
\end{proof}
\begin{proof}[{\bf Proof of Proposition 5.1}]
	We can approximate the nested integral by 
	\begin{align*}
	I \approx& \sum_{k_1 = 1}^{L} \kappa\left(z_0,  z^{(k_{1})}_1, b_{1} \right) \delta z \sum_{k_2 = 1}^{L} \kappa\left(z^{(k_{1})}_1,  z^{(k_{2})}_2, b_{2} \right) \delta z \cdots \sum_{k_{N-1} = 1}^{L} \kappa\left(z^{(k_{N-2})}_{N-2},  z^{(k_{N-1})}_{N-1}, b_{N-1} \right) \bar q\left( z^{(k_{N-1})}_{N-1}, b_N \right) \delta z.
	\end{align*}
	We write
	\begin{align*}
	f_i( z^{(k_{i})}_i) =& \sum_{k_{i+1} = 1}^{L} \kappa( z^{(k_{i})}_i,  z^{(k_{i+1})}_{i+1}, b_{{i+1}}) \sum_{k_{i+2} = 1}^{L} \kappa( z^{(k_{i+1})}_{i+1},z^{(k_{i+2})}_{i+2}, b_{{i+1}}) \cdots  \\
	&\hspace{5cm}\times\sum_{k_{N-1} = 1}^{L} \kappa( z^{(k_{N-2})}_{N-2},  z^{(k_{N-1})}_{N-1}, b_{N-1}) \bar q( z^{(k_{N-1})}_{N-1}, b_N),
	\end{align*}
	and
	$
	F_i = [
	\begin{array}{llll}
	f_i(  z_i^{(1)})& f_i(z_i^{(2)})& \cdots& f_i( z_i^{(L)})
	\end{array}
	]^\top
	$.
	Then we proceed as follows:
	\begin{itemize}
		\item[1)]
		The integral $I$ can be approximated to obtain
		$$
		I \approx \sum_{k_1 = 1}^{L} \kappa\left( z_0,  z^{(k_{1})}_1, b_{1} \right)f_1\left( z^{(k_{1})}_1 \right) = K_1 F_1.
		$$
		\item [2)]
		Then
		$f_1\left( z^{(k_{1})}_1 \right) = \sum_{k_2 = 1}^{L} \kappa\left( z^{(k_{1})}_1,  z^{(k_{2})}_2, b_{2} \right) f_2\left( z^{(k_{2})}_2 \right),$
		and therefore
		$
		F_1= K_2 F_2.
		$
		\item[3)]
		We repeat step 2) for $N-2$ times until 
		\begin{align*}
		f_{N-3}\left( z^{(k_{N-3})}_{N-3} \right) = \sum_{k_{N-2} = 1}^{L} \kappa\left( z^{(k_{N-3})}_{N-3},  z^{(k_{N-2})}_{N-2}, b_{N-2} \right) f_{N-2}\left( z^{(k_{N-2})}_{3} \right) = K_{N-2} F_{N-2},
		\end{align*}
		and thus, $F_{N-2} = K_{N-1}\bar Q.$
	\end{itemize}
	We finally have $I \approx \prod_{i=1}^{N-1}K_i  \bar Q (\delta z)^{N-1}$. Here, we may approximate $q\left( z_{N-1}, b_N \right)$ and $\kappa\left( z_{i-1},  z_{i}, b_i \right)$ as follows: 
	\begin{align*}
	\bar{q}\left( z_{N-1}, b_N \right) \approx&
	\left[1 - \sum\limits_{k = 1}^{K} c_k^{(N)} \e^{-\alpha_k^{(N)} t} \mathscr{H}_{\alpha_k^{(N)}}\left( -z_{N-1} \right) \right] \mathbb{1}\left({z_{N-1} < b_N}\right) + \mathbb{1}\left({z_{N-1} \geq b_N}\right), \\
	\kappa(z_{i-1}, z_i, b_i) \approx& \sum_{k=1}^{K}  {c_k^{(i)} \alpha_k^{(i)} \mathscr{H}_{\alpha_k^{(i)}}( -z_{N-1}) }\\
	&\times
	\sum_{j=1}^{J}
	\frac{x_j^{-\alpha_k^{(i)} - 1}\mathbb{1}\left({z_{i-1} < b_i}\right)\mathbb{1}\left({ z_i < b_i}\right)}{\sqrt{\pi(1-x_j^2)} }\exp\left\{ -\frac{(z_i - b_i x_j)^2}{1-x_j^2}- \alpha_k^{(i)} \Delta t \right\}  {\delta x_j} \\
	&+ p(0, \Delta t, z_{i-1}, z_{i}) \left(1 - \mathbb{1}\left({z_{i-1} < b_i}\right)\mathbb{1}\left({ z_i < b_i}\right) \right).
	\end{align*}
	where $\e^{-\Delta t}=x_0 < x_1 < \cdots < x_J = 1$ and $\delta x_j = x_j - x_{j-1}$. 
\end{proof}
\end{document}